\begin{document}

\newtheorem{theorem}{Theorem}[section]
\newtheorem{prop}[theorem]{Proposition}
\newtheorem{lemma}[theorem]{Lemma}
\newtheorem{cor}[theorem]{Corollary}
\newtheorem{defn}[theorem]{Definition}
\newtheorem{conj}[theorem]{Conjecture}
\newtheorem{claim}[theorem]{Claim}
\newtheorem{example}[theorem]{Example}
\newtheorem{rem}[theorem]{Remark}
\newtheorem{rmk}[theorem]{Remark}
\newtheorem{obs}[theorem]{Observation}
\newtheorem{qn}[theorem]{Question}
\newcommand{\map}{\rightarrow}
\newcommand{\C}{\mathcal C}
\newcommand\AAA{{\mathcal A}}
\newcommand\BB{{\mathcal B}}
\newcommand\DD{{\mathcal D}}
\newcommand\EE{{\mathcal E}}
\newcommand\FF{{\mathcal F}}
\newcommand\GG{{\mathcal G}}
\newcommand\HH{{\mathcal H}}
\newcommand\I{{\stackrel{\rightarrow}{i}}}
\newcommand\J{{\stackrel{\rightarrow}{j}}}
\newcommand\K{{\stackrel{\rightarrow}{k}}}
\newcommand\LL{{\mathcal L}}
\newcommand\MM{{\mathcal M}}
\newcommand\NN{{\mathbb N}}
\newcommand\OO{{\mathcal O}}
\newcommand\PP{{\mathcal P}}
\newcommand\QQ{{\mathcal Q}}
\newcommand\RR{{\mathcal R}}
\newcommand\SSS{{\mathcal S}}
\newcommand\TT{{\mathcal T}}
\newcommand\UU{{\mathcal U}}
\newcommand\VV{{\mathcal V}}
\newcommand\WW{{\mathcal W}}
\newcommand\XX{{\mathcal X}}
\newcommand\YY{{\mathcal Y}}
\newcommand\ZZ{{\mathbb Z}}
\newcommand\hhat{\widehat}
\newcommand\vfn{\stackrel{\A}{r}(t)}
\newcommand\dervf{\frac{d\stackrel{\A}{r}}{dt}}
\newcommand\der{\frac{d}{dt}}
\newcommand\vfncomp{f(t)\I+g(t)\J+h(t)\K}
\newcommand\ds{\sqrt{f^{'}(t)^2+g^{'}(t)^2+h^{'}(t)^2}dt}
\newcommand\rvec{\stackrel{\A}{r}}
\newcommand\velo{\frac{d\stackrel{\A}{r}}{dt}}
\newcommand\speed{|\velo|}
\newcommand\velpri{\rvec \,^{'}}
\newcommand{\RED}{\textcolor{red}}
\newcommand\gma{\Gamma}
\newcommand{\omg}{\Omega/_{\sim}}
\newcommand{\pt}{(\partial T)'}
\newcommand{\sub}{(\gamma_{\sigma(n)})_{n\in \mathbb{N}}}
\newcommand{\seq}{(\gamma_n)_{n\in \mathbb{N}}}

\newcommand{\secref}[1]{Section~\ref{#1}}
\newcommand{\thmref}[1]{Theorem~\ref{#1}}
\newcommand{\lemref}[1]{Lemma~\ref{#1}}
\newcommand{\remref}[1]{Remark~\ref{#1}}
\newcommand{\propref}[1]{Proposition~\ref{#1}}
\newcommand{\corref}[1]{Corollary~\ref{#1}}
\newcommand{\defref}[1]{Definbition~(\ref{#1})}
\title[Boundaries of graphs of relatively hyperbolic groups]{Boundaries of graphs of relatively hyperbolic groups with cyclic edge groups}

\author{Ravi Tomar}
\address{Indian Institute of Science Education and Research (IISER) Mohali, India}
\email{ravitomar547@gmail.com}
\thanks{2010 {\em Mathematics Subject Classification.} Primary 20F65,20F67}

\thanks{{\em Keywords and phrases.} Relatively hyperbolic groups, Limit sets, Bass-Serre tree, Convergence groups, Bowditch boundary}
\begin{abstract}
	We prove that the fundamental group of a finite graph of convergence groups with parabolic edge groups is a convergence group. Using this result, under some mild assumptions, we prove a combination theorem for a graph of convergence groups with dynamically quasi-convex edge groups (Theorem \ref{combination}). To prove these results, we use a modification of Dahmani's technique \cite{dahmni}. Then we show that the fundamental group of a graph of relatively hyperbolic groups with edge groups either parabolic or infinite cyclic is relatively hyperbolic and construct Bowditch boundary. Finally, we show that the homeomorphism type of Bowditch boundary of the fundamental group of a graph of relatively hyperbolic groups with parabolic edge groups is determined by the  homeomorphism type of the Bowditch boundaries of vertex groups (under some additional hypotheses)(Theorem \ref{homeotype}). In the last section of the paper, we give some applications and examples.
\end{abstract}
\maketitle
\section{Introduction}
In \cite{bestcom}, Bestvina and Feighn proved a combination theorem for hyperbolic groups. Motivated by this, Mj and Reeves proved a combination theorem for relatively hyperbolic groups \cite{mjreeves}. Using the characterization of relative hyperbolicity given by Yaman (see \cite{yaman}), Dahmani proved a combination theorem for acylindrical finite graph of relatively hyperbolic groups with fully quasi-convex edge groups \cite{dahmni}. 

 In Dahmani's paper \cite{dahmni}, Theorem 0.1(1) produces convergence groups only in the case of an acylindrical finite graph of relatively hyperbolic groups with fully quasi-convex edge groups (see the main Theorem of \cite{dahmni}). From first part of this theorem, if we remove the hypothesis that vertex groups are relatively hyperbolic or edge groups are fully quasi-convex, it is unclear whether the fundamental group of a graph of groups is a convergence group. This motivates the following natural question:
\begin{qn}\label{qn}
	Let $\Gamma$ be the fundamental group of a finite graph of convergence groups with dynamically quasi-convex edge groups. Under which condition(s) is $\Gamma$ a convergence group?
\end{qn}
We answer the above question in the cases: (a) when the edge groups are parabolic (b) when the edge groups are cyclic (c) when the collection of edge groups forms a dynamical malnormal family of dynamically quasi-convex subgroups in adjacent vertex groups (Definition \ref{defn}(3),(4)). We have the following result for parabolic edge groups:
\begin{theorem}\label{para}
	Let $\Gamma$ be the fundamental group of a finite graph of countable  convergence groups with parabolic edge groups. Then $\Gamma$ is a convergence group.
\end{theorem}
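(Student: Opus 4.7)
The plan is to adapt the technique Dahmani employs in \cite{dahmni} to the present parabolic-edge-group setting. Let $T$ be the Bass-Serre tree of the graph of groups, with vertex stabilizers $\{\Gamma_v\}$ and edge stabilizers $\{\Gamma_e\}$; each $\Gamma_v$ acts as a convergence group on some compact metrizable space $M_v$. The parabolic hypothesis furnishes, for every edge $e = [v,w]$ of $T$, a unique $\Gamma_e$-fixed point $p_e^v \in M_v$ and a unique $\Gamma_e$-fixed point $p_e^w \in M_w$. I will build a compact metrizable $\Gamma$-space $M$ by gluing the $M_v$'s along these parabolic fixed points (together with the ends of $T$ that survive the identification), and then verify that $\Gamma$ acts on $M$ as a convergence group.

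The first step is the construction of $M$ and its topology. As a set, $M$ is the quotient of $\left(\bigsqcup_{v \in V(T)} M_v\right) \sqcup \partial T$ by the smallest equivalence relation that identifies $p_e^v \sim p_e^w$ for every edge $e = [v,w]$ of $T$, together with the further identifications forced by infinite rays whose consecutive parabolic fixed points get tied. Basic open neighbourhoods are: (i) inherited from $M_v$ at a non-identified interior point of some $M_v$; (ii) at an identified parabolic class, a compatible family of small neighbourhoods of the relevant parabolic fixed points in each contributing $M_{v'}$, restricted to an appropriate cone in $T$; (iii) at an end $\xi \in \partial T$, the union of all $M_v$'s with $v$ in a sufficiently far half-space of $T$ determined by $\xi$. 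Hausdorffness, compactness (via sequential compactness together with the convergence property on each $M_v$), and metrizability (via countability of each $\Gamma_v$, hence a countable basis) then have to be checked. The $\Gamma$-action on $T$ combined with the conjugation actions on the $M_v$'s descends to a continuous $\Gamma$-action on $M$.

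Next comes verification of the convergence property. Given a sequence $(g_n)$ of distinct elements of $\Gamma$ and a base vertex $v_0 \in V(T)$, I analyse the orbit $(g_n v_0)$ in $T$. If this orbit is bounded, then after passing to a subsequence the $g_n$ lie in a single $\Gamma_{v_0}$-coset, so the convergence property of $\Gamma_{v_0}$ on $M_{v_0}$ provides a limit/exceptional pair $(a, b) \in M_{v_0}^2$ which extends across the identifications to all of $M$. If $(g_n v_0)$ is unbounded, extract subsequences so that $g_n v_0 \to \xi^+$ and $g_n^{-1} v_0 \to \xi^-$ in a suitable compactification of $T$ visible inside $M$; the tree dynamics then force any compact subset of $M$ disjoint from the image of $\xi^-$ to be translated by $g_n$ toward the image of $\xi^+$.

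The hardest part will be the uniform convergence in the unbounded case when $\xi^+$ (or $\xi^-$) is an identified parabolic class: such a point is the amalgam of infinitely many parabolic fixed points along a ray in $T$, so its neighbourhoods in $M$ are intrinsically ``thick''. Given a neighbourhood $U$ of $\xi^+$ and a compact $K \subset M \setminus \{\xi^-\}$, I must show $g_n K \subset U$ for all large $n$. This requires a Bowditch-type analysis: $K$ meets only finitely many $M_v$'s inside any bounded subtree, and the growing tree-translation length of $g_n$ eventually pushes $g_n K$ into the cone of $M_v$'s subordinate to $U$. Countability of the vertex groups (and hence of $T$), combined with the convergence property inside each $M_v$, permits a diagonal extraction that makes all these estimates simultaneous across the countable system of identifications.
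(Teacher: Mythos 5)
Your construction of $M$ is essentially the one the paper uses: Dahmani's gluing of the vertex compacta along the (singleton) limit sets of the edge groups together with $\partial T$, followed by the extra identification of the ends of each infinite domain with the corresponding edge-parabolic class. That last step is exactly the paper's key modification: since an edge group that is parabolic but not maximal parabolic has an infinite domain in $T$, without this identification the intermediate space fails to be Hausdorff. Your neighbourhood basis and your treatment of the unbounded case (the paper's ``Large Translation'' lemma) also match the paper's argument in outline.

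The genuine gap is in the bounded case. You assert that if the orbit $(g_n v_0)$ is bounded in $T$ then, after extraction, the $g_n$ lie in a single $\Gamma_{v_0}$-coset, so that the convergence property of the vertex group finishes the argument. This fails precisely in the situation the theorem is designed for: because the edge groups are merely parabolic, they typically have infinite index in the adjacent vertex groups, so the Bass-Serre tree is not locally finite and a bounded orbit can consist of infinitely many distinct vertices, no two of which yield the same coset. Concretely, in $A\ast_C B$ with $[A:C]=\infty$, take $g_n=a_nb$ with the $a_n$ in pairwise distinct cosets of $C$ in $A$: the vertices $g_nB$ are pairwise distinct, all at distance $2$ from the base vertex, and the $g_n$ lie in pairwise distinct cosets of the base vertex stabilizer. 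The paper treats this as a separate case of its ``Small Translation'' lemma: after extraction the segments $[v_0,g_nv_0]$ share a maximal common prefix ending at a vertex $v$, the edges immediately following $v$ are pairwise distinct, their singleton edge spaces form a sequence in the compact space $X_v$ which subconverges to a point $p\in X_v$, and the neighbourhood structure then forces $g_nK$ to converge uniformly to $p$ for every compact $K\subset M$ (here no exceptional point outside $X_v$ is needed). Without an argument covering this case, your verification of the convergence property is incomplete.
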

{\bf Note:} Proof of parts (2),(3),(4) of Theorem 0.1 in \cite{dahmni} also go through when convergence groups replace relatively hyperbolic groups, and rest of the hypotheses remain same. Therefore, with a little effort, proof of the above theorem also follows by combining parts (2),(3),(4) of Dahmani's theorem. In part (2),(3),(4) of Dahmani's theorem, the domain of edge boundary point (see Defintion \ref{domain}) is infinite, but it is of star-like form. On the other hand, we allow domains to be infinite subtrees of the Bass-Serre tree. For example, let $\Gamma =G_1\ast_PG_2$, where $G_1,G_2$ are convergence groups and $P$ is isomorphic to parabolic subgroups and not maximal parabolic in $G_1$ and $G_2$, respectively. Suppose $P_1,P_2$ are maximal parabolic containing isomorphic copy of $P$ in $G_1,G_2$ respectively. Then domain of edge parabolic point is the Bass-Serre tree of $P_1\ast_PP_2$. Thus, we give a more direct proof by generalizing the technique of Dahmani \cite{dahmni}. In fact, we explicitly construct a compact metrizable space (see Section \ref{3}) on which $\Gamma$ acts as a convergence group. Also, we use this construction for producing Bowditch boundary in Theorem \ref{rel para}.

We state a combination theorem for convergence groups that also answers Question \ref{qn}.
 \begin{theorem}\label{combination}
 	Let $\Gamma$ be the fundamental group of a finite graph of countable convergence groups such that stabilizers of the limit sets of edge groups  form a dynamically malnormal family of dynamically quasi-convex subgroups (Definition \ref{defn} (3),(4)) in adjacent vertex groups. Then $\Gamma$ is a convergence group.
 \end{theorem}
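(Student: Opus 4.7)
My plan is to deduce \thmref{combination} from \thmref{para} via a cusping (peripheralization) construction that replaces each dynamically quasi-convex edge group by a parabolic subgroup in a modified vertex-group boundary. The dynamical malnormality hypothesis is exactly what ensures this reduction is well-defined and compatible across the graph. \textbf{Step 1 (Peripheralize each vertex group):} Fix a vertex $v$, and for each edge $e$ incident to $v$ set $H_e^v = \mathrm{Stab}_{G_v}(\Lambda(G_e))$. The family of $G_v$-conjugates of the various $H_e^v$ is dynamically malnormal and dynamically quasi-convex by hypothesis, so on the convergence boundary of $G_v$ the corresponding translates of limit sets form a pairwise disjoint family of closed invariant subsets. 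I form the quotient $\widetilde{M}_v$ collapsing each translate $g\Lambda(H_e^v)$ to a single point $p_{g,e}$. Dynamical quasi-convexity and malnormality together ensure $\widetilde{M}_v$ is compact and metrizable and that the induced action of $G_v$ on $\widetilde{M}_v$ is again a convergence action; each $g H_e^v g^{-1}$ now fixes $p_{g,e}$ as a parabolic point.

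\textbf{Step 2 (Apply \thmref{para}).} In the modified data $(G_v,\widetilde{M}_v)$ every edge group acts parabolically on the peripheralized vertex boundary, while the underlying graph of groups is unchanged. Therefore \thmref{para} produces a convergence action of $\Gamma$ on a compact metrizable space $\widetilde{M}$, built via the explicit construction of \secref{3}.

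\textbf{Step 3 (Uncollapse).} I build the target space $M$ from $\widetilde{M}$ by reversing the quotient: each $\Gamma$-translate of a collapsed point $p_{g,e}$ is blown up to its corresponding limit set $g\Lambda(H_e^v)$ inside the relevant vertex boundary. Using dynamical quasi-convexity, the blown-up fibres are compact metrizable subsets that glue in a $\Gamma$-equivariant way, so $M$ is compact metrizable and the $\Gamma$-action on $\widetilde{M}$ lifts to an action on $M$. Convergence for an arbitrary sequence $(\gamma_n)$ of distinct elements is verified by combining (i) the convergence dynamics of $\Gamma$ on $\widetilde{M}$ from Step 2 with (ii) the convergence action of the relevant vertex-group stabilizer on its original boundary, which governs what happens inside a blown-up fibre.

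\textbf{Main obstacle.} The delicate point is the verification of the convergence property in Step 3 when the attractor or repeller produced in $\widetilde{M}$ lies on a blown-up fibre $g\Lambda(H_e^v)$: one must show that the orbit $(\gamma_n x)$ accumulates on a single point of that fibre rather than spreading across it. This is exactly where dynamical malnormality is used, to forbid sequences that pass through infinitely many distinct conjugates of $H_e^v$. The required argument parallels the domain-of-parabolic-point analysis already carried out for \thmref{para} in \secref{3}, now reinterpreted through the collapsing map $M \to \widetilde{M}$; the rest of the proof is bookkeeping to ensure the two convergence regimes on $\widetilde M$ and inside the fibres combine coherently.
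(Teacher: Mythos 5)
Your Steps 1 and 2 are precisely the paper's proof: the paper invokes Manning's collapsing construction (recorded as \lemref{manning}) to quotient each vertex boundary by the translates of the limit sets of the incident edge-group stabilizers, using dynamical quasi-convexity and malnormality to guarantee that the quotient is a compact metrizable space with a convergence $G_v$-action in which every edge group has become parabolic, and then applies \thmref{para}. Those two steps already finish the proof, since the theorem only asks for the existence of \emph{some} compactum with a convergence $\Gamma$-action, and the space produced by \secref{3} from the collapsed vertex boundaries serves. Your Step 3 (blowing the collapsed points back up to the original limit sets) is therefore unnecessary for the statement, and it is also the only place where your argument is not actually carried out: the ``main obstacle'' you flag about orbits accumulating on a single point of a blown-up fibre is a genuine difficulty, but it is one you have introduced yourself rather than one the theorem requires you to solve. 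I would simply delete Step 3, or, if you want a convergence action on a space retaining the full vertex boundaries, treat that as a separate (and harder) assertion needing its own proof.
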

In the above theorem, by the definition of a dynamically malnormal family (Definition \ref{defn} (4)), we see that action of $\Gamma$ on its Bass-Serre tree is $2$-acylindrical. Thus $\Gamma$ is the fundamental group of an acylindrical graph of convergence groups satisfying the hypotheses of the above theorem. When vertex groups are convergence groups and edge groups are fully dynamically quasi-convex in Theorem 0.1(1) of Dahmani \cite{dahmni}, it is different from the above theorem. In Theorem 0.1(1) of Dahmani, the limit sets of edge groups in adjacent vertex groups are homeomorphic, which is not the case here. There was no identification involved inside Bowditch boundaries of vertex groups. On the other hand, to prove the above theorem, we identify the translates of the limit set of edge groups in adjacent vertex groups with points, see Section \ref{combination proof}. We have the following proposition that gives an answer to Question \ref{qn} when edge groups are infinite cyclic.
\begin{prop}\label{cyclic}
	Let $\Gamma$ be the fundamental group of a finite graph of countable convergence groups with infinite cyclic edge groups, which are dynamically malnormal in the adjacent vertex groups. Then $\Gamma$ is a convergence group.
\end{prop}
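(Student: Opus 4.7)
My plan is to deduce Proposition \ref{cyclic} directly from Theorem \ref{combination}. The only thing that needs verification is that the stabilizers of the limit sets of the cyclic edge groups, taken inside the adjacent vertex groups, form a dynamically malnormal family of dynamically quasi-convex subgroups in the sense of Definition \ref{defn}.

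First I would analyze a single infinite cyclic edge group $E = \langle g\rangle \leq G_v$. Since $g$ has infinite order and $G_v$ acts as a convergence group on its boundary, $g$ is either loxodromic (with two fixed points) or parabolic (with one fixed point), so $\Lambda(E)$ is a finite subset of $\partial G_v$ of one or two points. Writing $E' := \mathrm{Stab}_{G_v}(\Lambda(E))$, I would invoke the standard fact that the setwise stabilizer of a finite subset of the limit set of a convergence group is dynamically quasi-convex (an \emph{elementary} subgroup in the common terminology). This handles the dynamical quasi-convexity requirement.

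Next, I would transfer the dynamical malnormality of the family $\{E\}$ to the family $\{E'\}$. Since $\Lambda(E) = \Lambda(E')$, for any $\gamma \in G_v$ the condition $\gamma\Lambda(E'_i)\cap\Lambda(E'_j)\neq\emptyset$ translates verbatim to the corresponding condition on $E_i, E_j$, and the hypothesis on the edge groups then delivers the desired conclusion. For this transfer to be clean one needs $E$ to be of finite index in $E'$: in the loxodromic case this is automatic, since the setwise stabilizer of a pair of loxodromic fixed points is virtually cyclic; in the parabolic case the dynamical malnormality of $E$ inside the maximal parabolic containing its fixed point is used to conclude that $E$ already coincides (or is commensurable) with that stabilizer.

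The main obstacle is precisely this parabolic case: a cyclic parabolic subgroup generically sits as an infinite-index subgroup of a larger parabolic, and it is the dynamical malnormality hypothesis that rules this out --- a non-trivial extra element of the parabolic stabilizer would produce a distinct conjugate of $E$ sharing the same limit set, contradicting the defining property of a dynamically malnormal family. With $[E':E]<\infty$ established in both cases, both hypotheses of Theorem \ref{combination} are met for the family $\{E'\}$, and the convergence-group conclusion for $\Gamma$ follows at once.
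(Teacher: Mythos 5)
Your reduction works for the amalgam case but breaks down for HNN extensions, and the paper explicitly flags this point: Proposition \ref{cyclic} is \emph{not} a direct corollary of Theorem \ref{combination}. The problem is the \emph{family} condition in Definition \ref{defn}(4). Your verification is carried out one edge group at a time: you show that each $E'=\mathrm{Stab}_{G_v}(\Lambda(E))$ is dynamically quasi-convex and that individual dynamical malnormality passes from $E$ to $E'$ (indeed, malnormality of $E$ forces $E'=E$ in both the loxodromic and the parabolic case, so that part of your argument is fine). But Definition \ref{defn}(4) also demands the cross condition $g\Lambda(H_i)\cap\Lambda(H_j)=\emptyset$ for $i\neq j$, and the hypothesis of Proposition \ref{cyclic} gives nothing of the sort when a single vertex group carries two edge-group embeddings. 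In the HNN case $\Gamma_1\ast_{\langle\gamma_1\rangle\simeq\langle\gamma_2\rangle}$, both $\langle\gamma_1\rangle$ and $\langle\gamma_2\rangle$ sit inside the same vertex group $\Gamma_1$; if, say, $\gamma_2=h\gamma_1h^{-1}$ for some $h\notin\langle\gamma_1\rangle$, then each subgroup is individually dynamically malnormal, yet $h\Lambda(\langle\gamma_1\rangle)=\Lambda(\langle\gamma_2\rangle)$, so the pair is not a dynamically malnormal family and Theorem \ref{combination} does not apply. Your sentence ``the hypothesis on the edge groups then delivers the desired conclusion'' is exactly where the argument fails for $i\neq j$.

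The paper accordingly splits the proof. The amalgam case is handled just as you propose (each vertex group sees only one edge group, so the family condition is vacuous), with dynamical quasi-convexity supplied by Lemma 2.6 of \cite{yang}. For the HNN case it instead applies Manning's quotient construction (Lemma \ref{manning}) to the vertex group: collapsing the translates of $\Lambda(\langle\gamma_1\rangle)$ makes $\gamma_1$ parabolic in the quotient; if $\gamma_2$ remains loxodromic there, one collapses a second time; after this both edge-group generators are parabolic and Theorem \ref{para} (parabolic edge groups) finishes the proof. To repair your argument you would need either to add the family hypothesis to the statement (which is not assumed) or to reroute the HNN case through this two-step collapsing as the paper does.
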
 
Note that infinite cyclic subgroups of a convergence group are dynamically quasi-convex (Lemma \ref{cyclicdyna}). The above proposition is not exactly a corollary of Theorem \ref{combination} (see Note \ref{reason 1}). In the above proposition, dynamical malnormality of edge groups can be replaced by torsion-free vertex groups (Proposition \ref{torsionfreecyclic}). Floyd boundary was introduced by W. Floyd in \cite{floyd}. One is referred to \cite{karlsson},\cite{yang} for related results. We have the following corollary:
\begin{cor}
	Let $\Gamma$ be the fundamental group of a finite graph of groups where the vertex groups are finitely generated torsion-free with non-trivial Floyd boundary and edge groups are infinite cyclic. Then $\Gamma$ is a convergence group. 
\end{cor}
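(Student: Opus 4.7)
The plan is to reduce this corollary to Proposition \ref{torsionfreecyclic}, which---as noted in the remark following Proposition \ref{cyclic}---replaces the dynamical malnormality hypothesis on cyclic edge groups by torsion-freeness of the vertex groups. Once I verify that each vertex group acts as a countable convergence group on a compact metrizable space, the hypotheses of Proposition \ref{torsionfreecyclic} will be satisfied verbatim and the conclusion follows immediately.

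The key input is Karlsson's theorem \cite{karlsson}: a finitely generated group $G$ with non-trivial Floyd boundary $\partial_f G$ acts on $\partial_f G$ as a discrete convergence group. The Floyd boundary itself is compact and metrizable, being constructed as the ideal boundary of the Cauchy completion of a Cayley graph with respect to a bounded Floyd scaling function. Each vertex group $G_v$ of $\Gamma$ is, by hypothesis, finitely generated with non-trivial Floyd boundary, so Karlsson's theorem produces a convergence action $G_v \curvearrowright \partial_f G_v$; being finitely generated, $G_v$ is in particular countable, which matches the standing countability assumption in this paper.

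With this in hand, all the hypotheses of Proposition \ref{torsionfreecyclic} are in place: $\Gamma$ is the fundamental group of a finite graph of countable convergence groups, the vertex groups are torsion-free, and the edge groups are infinite cyclic. Applying Proposition \ref{torsionfreecyclic} directly yields that $\Gamma$ is a convergence group, as required.

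I expect essentially no substantive obstacle here. The whole difficulty of handling cyclic edge groups without a malnormality assumption has already been absorbed into Proposition \ref{torsionfreecyclic}, and the identification of the vertex groups as convergence groups is a routine invocation of Karlsson's theorem. The one minor point to make explicit is that Karlsson's convergence action lives on a compact metrizable space, which is automatic from the construction of the Floyd boundary, so the hypotheses of Proposition \ref{torsionfreecyclic} really do apply without any further verification.
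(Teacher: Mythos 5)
Your proposal is correct and follows exactly the paper's own argument: invoke Karlsson's theorem to realize each vertex group as a convergence group on its (compact metrizable) Floyd boundary, then apply Proposition \ref{torsionfreecyclic}. The extra verifications you spell out (countability from finite generation, compactness of the Floyd boundary) are sound and merely make explicit what the paper leaves implicit.
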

\begin{proof}
	  Groups with non-trivial Floyd boundary act as a convergence group on their Floyd boundary (see \cite{karlsson}). Thus, the corollary follows from Proposition \ref{torsionfreecyclic}.
\end{proof}
 In the above corollary, if we take vertex groups to be relatively hyperbolic then $\Gamma$ is relatively hyperbolic by Theorem \ref{rel para}, and hence, by \cite{gerasimovfloyd}, it has non-trivial Floyd boundary. However, if vertex groups are not relatively hyperbolic, it is unclear whether $\Gamma$ has a non-trivial Floyd boundary; see Questions \ref{qn2} and \ref{qn3}.

Now, we state a combination theorem for a graph of relatively hyperbolic groups with parabolic edge groups. 
\begin{theorem}\label{rel para}
	Let $\Gamma$ be the fundamental group of a finite graph of relatively hyperbolic groups whose edge groups are parabolic subgroups of the adjacent vertex groups. Then $\gma$ is relatively hyperbolic.  
\end{theorem}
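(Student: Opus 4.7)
The plan is to apply Yaman's dynamical characterization of relative hyperbolicity \cite{yaman}: a group is relatively hyperbolic with respect to a collection of subgroups precisely when it admits a geometrically finite convergence action on a perfect compact metrizable space. The starting point is Theorem \ref{para}, which already produces a convergence action of $\Gamma$ on the compact metrizable space $M$ explicitly constructed in Section \ref{3} (built by gluing the Bowditch boundaries $\partial G_v$ of the vertex groups along translates of the limit sets of the edge groups). What remains is to identify the correct peripheral collection for $\Gamma$ and to verify that this action on $M$ is geometrically finite.

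To identify the peripheral subgroups, I would generate an equivalence relation on the set of all maximal parabolic subgroups of the vertex groups (and their $\Gamma$-translates) by declaring $P \sim P'$ whenever some edge stabilizer of the Bass-Serre tree $T$ lies in $P \cap P'$. Each equivalence class $[P]$ determines a subtree $T_{[P]} \subseteq T$ consisting of vertices whose maximal parabolic lies in the class together with the connecting edges; its setwise stabilizer $\widetilde{P} := \mathrm{Stab}_\Gamma(T_{[P]})$ is the fundamental group of a subgraph of groups all of whose vertex and edge groups are parabolic. The proposed peripheral family $\widetilde{\PP}$ consists, up to $\Gamma$-conjugacy, of the groups $\widetilde{P}$ together with the maximal parabolics of vertex groups that contain no edge group.

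Geometric finiteness is then verified by classifying each $p \in M$ according to its domain $D(p) \subseteq T$ (Definition \ref{domain}). If $D(p)$ is a single vertex $v$, then $p \in \partial G_v$; a conical limit point of $G_v$ remains conical for $\Gamma$, and a bounded parabolic point of $G_v$ whose stabilizer is a maximal parabolic not containing an edge group remains bounded parabolic in $\Gamma$ with the same stabilizer. The substantive case, illustrated by the example $G_1 *_P G_2$ with $P$ a non-maximal parabolic, is when $D(p)$ is an infinite subtree: here $p$ is the common fixed point of all parabolics in an equivalence class, with $\Gamma$-stabilizer $\widetilde{P}$, and the task is to show that $p$ is bounded parabolic.

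The main obstacle is this last step: producing a compact fundamental domain for the action of $\widetilde{P}$ on $M \setminus \{p\}$. The argument I would set up uses that $\widetilde{P}$ acts cocompactly on $T_{[P]}$ and that at each vertex $w \in T_{[P]}$ the corresponding maximal parabolic of $G_w$ already acts cocompactly on $\partial G_w \setminus \{p\}$ by bounded parabolicity inside the vertex group. Piecing these vertex-level fundamental domains together via edge-stabilizer identifications along $T_{[P]}$, and ruling out escape to other parabolic points or to conical limit points using the explicit topology on $M$ from Section \ref{3}, should yield the desired compactness. Once this cocompactness is in place for every such $p$, Yaman's theorem delivers that $\Gamma$ is relatively hyperbolic with peripheral structure $\widetilde{\PP}$, and the Bowditch boundary of $\Gamma$ is identified with $M$.
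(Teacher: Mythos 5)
Your proposal follows essentially the same route as the paper: the paper builds the same space $M$ from the Bowditch boundaries of the vertex groups, invokes the convergence action from Theorem \ref{para}, identifies the same peripheral collection (stabilizers of edge parabolic points, which are fundamental groups of subgraphs of parabolic groups, together with the remaining maximal parabolic subgroups of the vertex groups), and proves bounded parabolicity of the infinite-domain points exactly as you outline, by piecing together vertex-level compact fundamental domains along the subtree $D(p)$ using the cocompact action of the stabilizer on that subtree (Lemma \ref{bdd prbolc}, Case 2). The only case your classification omits is that the points of $(\partial T)'$ --- boundary points of the Bass--Serre tree not identified with any edge parabolic point, whose domains are neither a single vertex nor an infinite subtree --- must also be shown to be conical limit points, which the paper does in Lemma \ref{conical} by translating a basepoint along the defining ray and applying the large-translation lemma.
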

We explain parabolic structure of $\Gamma$ after proof of the above theorem in section 6. In the above theorem, if edge group is not maximal parabolic in adjacent vertex groups, it does not satisfy the hypotheses of \cite[Theorem 0.1]{dahmni}. However, proof of the above theorem still follows from parts (2),(3),(4) of Theorem 0.1 in \cite{dahmni}. Here, we give a different proof by constructing a compact metrizable space on which $\Gamma$ acts geometrically finitely. Therefore, we have an explicit construction of Bowditch boundary of $\Gamma$. Note that relatively hyperbolicity in Theorem \ref{rel para} also follows from the work of Bigdely and Wise \cite{bigdely}. Since in our situation, parabolic edge groups can be infinitely generated, the first condition of the theorem of Mj-Reeves \cite{mjreeves} is not satisfied. Thus, relative hyperbolicity does not follow from the theorem of Mj-Reeves. The following is a combination theorem for a graph of relatively hyperbolic groups with cyclic edge groups:
\begin{theorem}\label{rel cyclic}
	Let $\Gamma$ be the fundamental group of a finite graph of  relatively hyperbolic groups with infinite cyclic edge groups. Then $\Gamma$ is relatively hyperbolic.
\end{theorem}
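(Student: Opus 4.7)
My plan is to reduce Theorem \ref{rel cyclic} to Theorem \ref{rel para} by a Bass-Serre refinement that turns every infinite cyclic edge group into a parabolic edge group of an enlarged graph of groups whose fundamental group is still $\Gamma$.

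For each edge $e$ with cyclic edge group $E_e = \langle \gamma_e \rangle$, I first classify $\gamma_e$ in each adjacent vertex group $G_v$. Since $G_v$ is relatively hyperbolic, $\gamma_e$ is either parabolic in $G_v$, in which case $E_e$ is contained in a unique maximal parabolic $P_e^v \leq G_v$, or it is loxodromic in $G_v$, in which case $E_e$ is contained in a unique maximal almost malnormal virtually cyclic subgroup $H_e^v \leq G_v$; both are dynamically quasi-convex, the cyclic one by Lemma \ref{cyclicdyna}. I then refine the graph of groups: for each edge $e = (v,w)$, insert a new vertex on each end of $e$ whose vertex group is $P_e^v$ (at a parabolic end) or $H_e^v$ (at a loxodromic end), and similarly on the $w$ side. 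The outer sub-edge receives an edge group equal to the full inserted vertex group (via its natural inclusion into $G_v$ and the identity into the inserted vertex group), while the middle sub-edge retains the cyclic edge group $E_e$. The identity amalgamation $G_v *_{G_{u_v}} G_{u_v} = G_v$ shows that this refinement does not change the fundamental group, which remains $\Gamma$. At loxodromic ends, I temporarily enlarge the peripheral family of $G_v$ by adjoining the conjugacy class of $H_e^v$; since $H_e^v$ is almost malnormal and dynamically quasi-convex, this preserves the relatively hyperbolic structure of $G_v$.

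After these reductions, every edge group of the refined graph is now a maximal parabolic in each adjacent vertex group: the newly inserted vertex groups $P_e^v$ and $H_e^v$ are declared relatively hyperbolic relative to themselves, so the outer edges are parabolic in both endpoints by construction, and the middle cyclic edges are parabolic in both inserted endpoints. Theorem \ref{rel para} now applies and yields that $\Gamma$ is relatively hyperbolic. To recover the intended parabolic structure, I would observe that each temporarily adjoined $H_e^v$ becomes conjugate in $\Gamma$ to a loxodromic virtually cyclic subgroup (its images on either side of the inserted vertex are amalgamated along $E_e$ into a single virtually cyclic subgroup of $\Gamma$) and can therefore be removed from the peripheral family without disturbing relative hyperbolicity. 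The genuine peripheral family of $\Gamma$ thus consists of the $\Gamma$-conjugates of the original maximal parabolics of the vertex groups, with further amalgamations whenever a cyclic edge group is parabolic in both endpoints and forces two maximal parabolics of adjacent vertex groups to be identified inside a larger parabolic of $\Gamma$.

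The main obstacle will be the bookkeeping around the two reductions above: first, rigorously justifying that enlarging a peripheral family by an almost malnormal dynamically quasi-convex virtually cyclic subgroup preserves relative hyperbolicity in the generality needed here, and second, correctly identifying and removing the enlarged peripherals to describe the final parabolic family of $\Gamma$. The explicit boundary construction of Section \ref{3}, already used in the proof of Theorem \ref{rel para}, provides the Bowditch boundary of $\Gamma$ and the framework in which Yaman's criterion can be verified directly.
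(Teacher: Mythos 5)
Your proposal follows essentially the same route as the paper: classify each cyclic edge generator as parabolic or loxodromic in the adjacent vertex groups, absorb the loxodromic ones into the peripheral structure via the maximal elementary (virtually cyclic) subgroup containing them, and then invoke Theorem \ref{rel para}. The one step you flag as the main obstacle --- that adjoining this almost malnormal virtually cyclic subgroup preserves relative hyperbolicity --- is precisely what the paper supplies by citing Osin's theorem on elementary subgroups (Theorem \ref{osin} and \cite[Corollary 1.7]{osinelem}, which says $E(g)$ is hyperbolically embedded); note also that your Bass--Serre subdivision and the final removal of the adjoined peripherals are unnecessary, since Theorem \ref{rel para} only requires edge groups to be parabolic (not maximal parabolic) and the theorem as stated does not prescribe the peripheral family of $\Gamma$.
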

 To prove the above theorem, we use Theorem \ref{rel para} (see Section \ref{6} ). In particular, by extending the parabolic structure on vertex groups, we convert a graph of reltively hyperbolic groups with infinite cyclic edge groups into a graph of relative hyperbolic groups with parabolic edge groups. Thus, we can explicitly construct Bowditch boundary for the group $\Gamma$.
 Now, we state a theorem for the homeomorphism type of Bowditch boundary of the fundamental group of a graph of relatively hyperbolic groups with parabolic edge groups.
 \begin{theorem}[Theorem \ref{homeotype}]
 	Let $Y$ be a finite connected graph and let $G(Y),G'(Y)$ be two graph of groups satisfying the following:
 	\begin{enumerate}
 		\item For each vertex $v\in V(Y)$, let $(G_v,\mathbb{P}_v),(G_v',\mathbb{P}_v')$ be relatively hyperbolic groups.
 		\item Let $e\in E(Y)$ be an edge with vertices $v,w$ and let $P_e,P_e'$ are parabolic edge groups in $G(Y),G'(Y)$, respectively. Then either $P_e,P_e'$ have infinite index in corresponding maximal parabolic subgroups in $G_v,G_v'$, respectively or $P_e,P_e'$ have the same finite index in maximal parabolic subgroups in $G_v,G_v'$, respectively. Similarly, either $P_e,P_e'$ have infinite index in maximal parabolic subgroups in $G_w,G_w'$, respectively or $P_e,P_e'$ have the same finite index in corresponding maximal parabolic subgroups in $G_w,G_w'$, respectively.
 		\item Let $B_v$, $B_v'$ be the set of translate of parabolic points corresponding to adjacent edge groups under the action of $G_v,G_v'$ on their Bowditch boundaries respectively. For each vertex $v\in V(Y)$, suppose we have a homeomorphism from $\partial G_v\rightarrow \partial G_v'$ that maps $B_v$ onto $B_v'$.
 	\end{enumerate}
 	Let $\Gamma=\pi_1(G(Y))$, $\Gamma'=\pi_1(G'(Y))$. (By Theorem \ref{rel para}, the groups $(\Gamma,\mathbb{P}),(\Gamma',\mathbb{P'})$ are relatively hyperbolic) Then there exists a homeomorphism from $\partial \Gamma$ to $\partial \Gamma'$ preserving edge parabolic points, i.e. taking parabolic points corresponding to edge groups of $G(Y)$ to parabolic points corresponding to edge groups of $G'(Y)$.
 \end{theorem}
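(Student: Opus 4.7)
The strategy is to exploit the explicit construction of the Bowditch boundary supplied by the proof of Theorem~\ref{rel para}. That construction realises $\partial \Gamma$ as a quotient of the disjoint union of the end-boundary $\partial T$ of the Bass-Serre tree $T$ of $G(Y)$ together with one copy of $\partial G_v$ at each vertex $\tilde v \in V(T)$ lifting $v \in V(Y)$, modulo the identifications that at each edge $\tilde e \in E(T)$ the parabolic points in the two adjacent vertex boundaries corresponding to the stabiliser of $\tilde e$ are glued together (cf.\ Section~\ref{3}). The same description applies to $\partial \Gamma'$. The plan is therefore to build (i) a combinatorial isomorphism $\Phi : T \to T'$ of the two Bass-Serre trees covering the identity on $Y$, (ii) for each $\tilde v \in V(T)$ a homeomorphism $h_{\tilde v} : \partial G_{\tilde v} \to \partial G'_{\Phi(\tilde v)}$ obtained from the given $h_v$, and (iii) to verify that these data are compatible with the parabolic identifications along edges and descend to a homeomorphism on the quotients.

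The tree isomorphism $\Phi$ is built inductively from a choice of base lifts $\tilde v_0, \tilde v_0'$ over the same $v_0 \in V(Y)$. At a vertex $\tilde v$ already matched with $\tilde v' = \Phi(\tilde v)$, the set of incident edges of $T$ projecting to an edge $e \in E(Y)$ incident to $v = \pi(\tilde v)$ is in $G_v$-equivariant bijection with $G_v/P_e$, which decomposes as a bundle over $G_v/P_e^{\max}$ with fibre $P_e^{\max}/P_e$; here $P_e^{\max}$ denotes the maximal parabolic of $G_v$ containing $P_e$. One matches the base $G_v/P_e^{\max}$ with $G'_v/(P'_e)^{\max}$ via the bijection that $h_v$ induces between the $G_v$-orbit $G_v\cdot p_e \subset B_v$ and its image inside $B'_v$, and matches the fibres $P_e^{\max}/P_e \leftrightarrow (P'_e)^{\max}/P'_e$ using hypothesis~(2): these two sets are either both of the same finite cardinality or both countably infinite. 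Iterating yields $\Phi$ covering the identity on $Y$ and respecting the edge-labelling; each vertex homeomorphism $h_{\tilde v}$ is then a copy of the given $h_v$, and $\Phi$ extends to a canonical homeomorphism $\partial \Phi : \partial T \to \partial T'$ of the spaces of ends.

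Assembling $\partial \Phi$ with the $h_{\tilde v}$ gives a map $f$ on the disjoint union that, by the choices made in constructing $\Phi$, carries the pair of parabolic points identified over each edge $\tilde e \in E(T)$ to the corresponding identified pair over $\Phi(\tilde e)$. Thus $f$ descends to a bijection $\bar f : \partial \Gamma \to \partial \Gamma'$, and by construction it sends edge parabolic points to edge parabolic points. The main remaining task, which I expect to be the principal technical obstacle, is to show that $\bar f$ is continuous (then, by compactness of $\partial \Gamma$, it is automatically a homeomorphism). Here one compares the two topologies using the basic neighbourhood systems provided by the construction in Section~\ref{3}: a neighbourhood of an interior point of some $\partial G_{\tilde v}$ comes from $\partial G_v$; a neighbourhood of a point of $\partial T$ is given by a far-away subtree; and a neighbourhood of an identified edge parabolic point combines horoball-like pieces drawn from all vertex boundaries meeting that point together with a subtree of $T$ of large depth. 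The first kind of neighbourhood is preserved by $f$ via continuity of $h_v$ and the second by $\partial \Phi$; the third requires the most care, as it involves the possibly infinite family of cosets of $P_e$ inside $P_e^{\max}$ simultaneously on both sides, and the matching built into $\Phi$ at the fibre step is precisely what is needed to make the two neighbourhood bases correspond.
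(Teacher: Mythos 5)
Your proposal follows essentially the same route as the paper's proof: you build the Bass--Serre tree isomorphism from the coset bijections supplied by hypotheses (2) and (3) (matching $G_v/P_e^{\max}$ via the homeomorphism of boundaries and the fibres $P_e^{\max}/P_e$ via the index hypothesis), assemble it with the vertex homeomorphisms into a bijection compatible with the edge identifications, and deduce continuity from the explicit neighbourhood bases of Section~\ref{3} together with compactness. The only organisational difference is that the paper first reduces to the amalgam and HNN cases, whereas you treat a general finite graph directly; the substance of the argument is the same.
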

 \begin{rem}
 	The proof of Theorem 0.1 in \cite{dahmni} works only for infinite edge groups. In particular, if the edge groups are finite then the space $M$ constructed in \cite{dahmni} need not be compact. In this paper, in all of the above theorems, we are also taking infinite edge groups. In \cite{alex}, Martin and \'{S}wi\k{a}tkowski constructed Gromov boundary for a graph of hyperbolic groups with finite edge groups, see also \cite{martin1}. If we take a graph of convergence groups with finite edge groups then by \cite{bowrel}, the fundamental group of graph of groups is relatively hyperbolic with respect to infinite vertex groups and hence it a convergence group.
 \end{rem}
 
\textbf{A few words on the proofs}: To prove Theorem \ref{para}, we generalize Dahmani's technique. Here, we explicitly construct a space on which the fundamental group of the graph of groups acts as a convergence group. In \cite{dahmni}, the main role of fully quasi-convex edge groups and acylindrical action was to get uniformly bounded domains (see Definition \ref{domain} ). In our situation, domains for points in edge spaces may be infinite (when edge groups are not maximal parabolic), therefore  the space constructed by Dahmani does not work, See \ref{reason}. Thus, we modify the space. For that, we look at the domain of a point $\xi$ in an edge space and identify all the boundary points (in the visual boundary of Bass-Serre tree) of the domain of $\xi$ with $\xi$ itself. In our situation, domains for points other than edge spaces are singletons. We get a new set by going modulo the equivalence relation generated by this. Then we define a topology on this set and see that this is our candidate space. In Section \ref{paracaseproof}, we prove Theorem \ref{para} as Corollary 4.3.  For proving Theorem \ref{combination}, we use a result of Manning from \cite{manning}. Using this result, proof of this theorem boils down to the parabolic edge group case, and this is done by Theorem \ref{para}. Proof of Theorem \ref{para} also gives proof of Theorem \ref{rel para}. In the proof of Theorem \ref{rel para}, for the construction of candidate space (see Section \ref{3} ), we take Bowditch boundary over which vertex groups act geometrically finitely and the rest of the things remain the same.

 For proving these theorems, it is sufficient to consider only the amalgam and the HNN extension case. For a general graph of groups, we may take a maximal tree in the graph over which we are taking a graph of groups. By proving the theorems for the amalgam, we are done for a graph of groups over a maximal tree. By adding the remaining edges one by one, we are in the HNN extension case. By proving the theorems in the HNN extension case, we are done.

\textbf{Acknowledgement:} I would like to thank my supervisor, Pranab Sardar, for many helpful discussions and comments on the exposition of the paper.
\section{preliminaries}\label{pre}
 We collect here some necessary definitions from \cite{bowconvg},\cite{tukia}, and \cite{bearlimit}
 
 	 A group $\Gamma$ is said to be a {\em convergence group} if there exists a compact metrizable space $M$ and $\Gamma$ acts on $M$ such that: given any sequence $(\gamma_n)_{n \in \mathbb{N}}$ in $\Gamma$, there exists a subsequence $(\gamma_{\sigma(n)})_{n \in \mathbb{N}}$ and two points $\xi$, $\eta$ in $M$ such that  for all compact subset $K$ of $M \setminus \{\eta\}$, $\gamma_{\sigma(n)}K$ converges uniformly to $\xi$.
 	 
 	  Let $\Gamma$ be a convergence group on a compact metrizable space $M$. An infinite order element $\gamma \in \Gamma$ is said to be \emph{loxodromic} if it has exactly two distinct fixed points in $M$. A point $\xi\in M$ is said to be a {\em conical limit point} if there exists a sequence $(\gamma_n)_{n \in \mathbb{N}}$ in $\Gamma$ and two distinct points $\zeta$, $\eta$ such that $\gamma_n\xi$ converges to $\zeta$ and for all $\xi' \in M\setminus \{\xi \}$, $\gamma_n\xi'$ converges to $\eta$.	A subgroup $G$ of $\Gamma$ is said to be \emph{parabolic} if it is infinite, it does not have any loxodromic element and it fixes a point $\xi$. Such a point is unique and called a \emph{parabolic point}. The stabilizer of a unique parabolic point is called {\em maximal parabolic subgroup}. A parabolic point $\xi$ is called a bounded parabolic point if $Stab_{\Gamma}(\xi)$ acts co-compactly on $M\setminus\{\xi\}$. The group $\Gamma$ is said to be {\em geometrically finite} if every point of $M$ is either a conical limit point or a bounded parabolic point.
 	  \begin{rem}
 	  	If $H$ is a subgroup of $\gma$ and  $\gma$ acts as a convergence group on a compact space $M$, then every conical limit point for $H$ action on $\Lambda H \subset M$ (see Definition \ref{defn}(1) below) is a conical limit point for $H$ acting in $M$ and hence for $\gma$ in $M$. Each parabolic point for $H$ in $\Lambda H$ is a parabolic point for $\gma$ in $M$ and its maximal parabolic subgroup in $H$ is exactly the intersection of maximal parabolic subgroup in $\gma$ with $H$.
 	  \end{rem}
Now, we define relatively hyperbolic groups. There are several equivalent definitions of relative hyperbolicity (see \cite{hruska}), but we use the definition given by Bowditch  \cite[Definition 1]{bowrel}.

 We say that a group $\Gamma$ is {\em hyperbolic relative to} a family $\mathcal{G}$ of finitely generated subgroups if $\Gamma$ admits a properly discontinuous isometric action on a proper hyperbolic space $X$ such that induced action of $\Gamma$ on $\partial X$ is convergence, every point of $\partial X$ is either conical or bounded parabolic, and the subgroups in the family $\GG$ are precisely the maximal parabolic subgroups.
 
	In short, we say that the pair $(\Gamma,\GG)$ is a relatively hyperbolic group. In this case, the boundary of $X$ is canonical and called the Bowditch boundary of $(\Gamma,\GG)$. We shall denote it by $\partial \Gamma$. We have the following topological characterization for relatively hyperbolic group given by Yaman \cite{yaman}. 
\begin{theorem}\label{thm1}
\cite{yaman} Let $\Gamma$ be  a geometrically finite group acting on a non-empty perfect metrizable compactum. Assume that the quotient of bounded parabolic points is finite under the action of $\Gamma$ and the corresponding maximal parabolic subgroups are finitely generated. Let $\GG$ be the family of maximal parabolic subgroups. Then $(\Gamma,\GG)$ is a relatively hyperbolic group and $M$ is equivariantly homeomorphic to Bowditch boundary of $\Gamma$. 
\end{theorem}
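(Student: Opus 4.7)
The plan is to realize the given convergence action geometrically, following Bowditch's strategy for his earlier characterization of relatively hyperbolic groups. I would build a proper Gromov-hyperbolic graph $X$ carrying a properly discontinuous isometric $\Gamma$-action together with a $\Gamma$-equivariant homeomorphism $\partial X \to M$; since the maximal parabolic subgroups are finitely generated by hypothesis, the resulting structure witnesses $(\Gamma, \GG)$ as relatively hyperbolic in the sense used in the paper. The initial reduction is standard: the convergence property of the $\Gamma$-action on $M$ is equivalent to $\Gamma$ acting properly discontinuously on the space $\Theta_3(M)$ of distinct ordered triples. Using that bounded parabolic points form finitely many $\Gamma$-orbits and that each such stabilizer acts cocompactly on the complement of the parabolic point in $M$, I would choose a $\Gamma$-invariant pairwise disjoint family $\{U_\xi\}$ of small neighborhoods of the parabolic points so that $\Gamma$ acts cocompactly on the ``thick part'' $\Theta'_3(M) \subset \Theta_3(M)$ consisting of triples no two of whose coordinates lie in a common $U_\xi$.

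Next I would construct $X$. Start with a $\Gamma$-invariant, connected, locally finite spine $X_0$ quasi-isometric to the quotient data on $\Theta'_3(M)$ (for instance a Rips-type graph built from an equivariant open cover by subsets with compact quotient). Then attach a Groves--Manning combinatorial horoball over each bounded parabolic point $\xi$, built on a Cayley graph of the finitely generated maximal parabolic subgroup $\mathrm{Stab}_{\Gamma}(\xi)$. The resulting graph $X$ is proper, and $\Gamma$ acts on it properly discontinuously by isometries, with horoballs permuted according to the $\Gamma$-action on parabolic points.

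Finally I would identify $\partial X$ with $M$. Define $\phi \colon \partial X \to M$ by sending a boundary point to the limit in $M$ of the image of vertices along a ray in $X$ under a $\Gamma$-orbit map to $\Theta_3(M)$; a standard compactness argument shows $\phi$ is well-defined, equivariant, and continuous. Surjectivity splits into two cases: tips of horoballs map to the bounded parabolic points, with matching stabilizers by construction, while other boundary points correspond to conical limit points, the geometric-finiteness hypothesis supplying the requisite convergence. Injectivity is forced by the convergence property on both sides together with the dichotomy conical/parabolic. Since $\phi$ is then a continuous equivariant bijection between compact Hausdorff spaces, it is a homeomorphism, and the peripheral subgroups in the resulting relatively hyperbolic structure are exactly the maximal parabolic subgroups, as required.

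The main obstacle is verifying Gromov hyperbolicity of $X$ after the horoball attachments: one must show thin triangles whose sides traverse horoball boundaries, and this requires quasi-isometric control on ``transition zones'' between the spine and each horoball. The key input is cocompactness of each peripheral stabilizer on $M \setminus U_\xi$, which bounds the complexity of entries and exits; finite generation of maximal parabolic subgroups is essential so that the combinatorial horoballs are themselves locally finite and uniformly $\delta$-hyperbolic, enabling standard gluing lemmas to conclude hyperbolicity of $X$.
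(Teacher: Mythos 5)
First, a structural point: the paper you are working from does not prove this statement at all. It is quoted as Theorem 2.3 directly from Yaman \cite{yaman}, and the paper only appends a remark that the finite-generation hypothesis on the maximal parabolic subgroups plays no role in Yaman's argument (and that, by Tukia \cite{pekka}, finiteness of the set of orbits of bounded parabolic points can also be dropped). So the relevant comparison is with Yaman's proof, not with anything in this paper.

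Measured against that, your proposal has a genuine gap, and it sits exactly where you flag "the main obstacle": the hyperbolicity of $X$ after attaching horoballs. Your spine $X_0$ is, by construction, a graph on which $\Gamma$ acts properly and cocompactly, hence quasi-isometric to the thick part of the triple space; this space is in general \emph{not} hyperbolic (for a finite-covolume Kleinian group with cusps it is quasi-isometric to the group itself, which contains undistorted copies of $\ZZ\oplus\ZZ$). So no gluing lemma whose hypotheses require hyperbolic pieces attached along quasiconvex subsets can apply: you are gluing hyperbolic horoballs onto a non-hyperbolic spine, and by Groves--Manning the hyperbolicity of the resulting cusped space is \emph{equivalent} to the relative hyperbolicity of $(\Gamma,\GG)$ that you are trying to establish, so it cannot be obtained by soft gluing arguments. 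A second symptom of the same gap: your hyperbolicity discussion invokes only the cocompactness of peripheral stabilizers on $M\setminus U_\xi$, and nowhere uses that all remaining points of $M$ are conical limit points (conicality appears in your sketch only for surjectivity of $\phi$); since the conclusion is false without the conical hypothesis, no correct hyperbolicity argument can have the shape you describe. Yaman's actual proof does something essentially different at this point: following Bowditch's topological characterization of hyperbolic groups \cite{bowtopo}, she builds a $\Gamma$-invariant annulus system on $M$ adapted to the parabolic points and uses Bowditch's machinery to produce a hyperbolic path quasimetric on the triple space augmented by horoball-like pieces, with both conicality and bounded parabolicity feeding into the verification of the annulus axioms; hyperbolicity comes from that cross-ratio machinery, not from gluing. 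Finally, your claim that finite generation of the parabolics is "essential" inverts the actual situation: Yaman's proof never uses it (it appears in the statement only to match Bowditch's definition of relative hyperbolicity, as the paper's Remark 2.4 notes), whereas your construction of horoballs over Cayley graphs genuinely needs it --- a further indication that the Groves--Manning route, even if completed, would be strictly less general than the cited proof.
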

\begin{rem}
	The assumption that maximal parabolic subgroups are finitely generated does not play any role in the proof of the above theorem, but it is there merely to satisfy the hypothesis in Bowditch's definition of a relatively hyperbolic group. Also by a result of Tukia \cite[Theorem 1B]{pekka}, one can remove the assumption of finiteness of the set of orbits of bounded parabolic points.  
\end{rem}
\begin{defn}\label{defn} We collect the following definitions:
\begin{enumerate}
	\item (Limit set of a subgroup \cite{dahmni}) Let $\gma$ be a convergence group on $M$. The {\em limit set} $\Lambda(H)$ of  an infinite subgroup $H$ is the unique minimal non-empty closed $H$-invariant subset of $M$. The limit set of a finite set is empty.
	
	\item (Relatively quasi-convex subgroup \cite{dahmni}) Let $\Gamma$ be a relatively hyperbolic group with Bowditch boundary $\partial\Gamma$. Let $H$ be a group acting as a geometrically finite convergence group on a compact metrizable space $\partial H$. We assume that $H$ embeds in $\Gamma$ as a subgroup. $H$ is \emph{relatively quasi-convex} if $\Lambda(H) \subset \partial\Gamma$ is equivariantly homeomorphic to $\partial H$.
	
	\item (Dynamically quasi-convex subgroup \cite{bowconvg})  Let $G$ be a convergence group on a compactum $M$. A subgroup $H$ of $G$ is said to be \emph{dynamical quasi-convex} if the following set
	
	$ \{gH\in G/H : g\Lambda(H)\cap K\neq \emptyset, g\Lambda(H)\cap L \neq \emptyset\}$ is finite, whenever $K$ and $L$ are closed disjoint subsets of $M$.
	
	\item (Dynamically malnormal family) Let $\Gamma$ be a convergence group on a compactum $M$. A subgroup $H$ is said to be \emph{dynamically malnormal} if for all $g\in G\setminus H$, $g\Lambda(H)\cap \Lambda(H) =\emptyset$. A collection of subgroups $\{H_i\}_{i\in I}$ is said to form a \emph{dynamically malnormal family} if all $H_i$ are dynamical malnormal and for all $g\in \Gamma$, $g\Lambda(H_i)\cap \Lambda(H_j)=\emptyset$ unless $i=j$ and $g\in H_i$.
	\end{enumerate}
\end{defn}
	Since the limit set of a parabolic subgroup in a convergence group is a singleton, parabolic subgroups are dynamically quasi-convex. Also, by \cite{gerasomov-potya}, relatively quasi-convex subgroups of a relatively hyperbolic group are dynamically quasi-convex. Now, we prove the following:
	
	\begin{lemma}
		Let $M$ be a compact metrizable space. Suppose a group $G$ acts on $M$ as a convergence group. Then infinite cyclic subgroups of $G$ are dynamically quasi-convex.
	\end{lemma}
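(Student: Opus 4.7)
Since $H = \langle g\rangle$ is infinite cyclic, the generator $g$ has infinite order, and by Tukia's classification of elements in a convergence group, $g$ is either parabolic or loxodromic. The plan is to treat these two cases separately. If $g$ is parabolic, then $\Lambda(H) = \{p\}$ consists of the single fixed point of $g$, so for every $h \in G$ the translate $h\Lambda(H) = \{hp\}$ is a singleton and cannot meet two disjoint closed sets $K, L \subset M$ simultaneously; the defining condition of dynamical quasi-convexity is vacuously satisfied.

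The substantive case is $g$ loxodromic, so $\Lambda(H) = \{p, q\}$ where $p, q$ are the two fixed points of $g$. I would argue by contradiction: suppose there are disjoint closed $K, L \subset M$ and infinitely many pairwise distinct cosets $h_n H$ with $h_n\{p,q\}$ meeting both $K$ and $L$. After relabeling we may assume $h_n p \in K$ and $h_n q \in L$; by compactness of $M$, pass to a subsequence along which $h_n p \to a \in K$ and $h_n q \to b \in L$. Since $K \cap L = \emptyset$ we have $a \neq b$.

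Next I apply the convergence property to $(h_n)$ and extract a further subsequence with collapsing pair $(\xi, \eta)$: for every compact $C \subset M \setminus \{\eta\}$, $h_n C$ converges uniformly to $\xi$. If $\eta \notin \{p, q\}$, then both $h_n p$ and $h_n q$ converge to $\xi$, forcing $a = b$ and giving a contradiction. Hence $\eta \in \{p, q\}$; without loss of generality $\eta = p$, and since $h_n q \to \xi$ and $q \neq p$, we get $\xi = b$.

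To finish in this remaining subcase I plan to invoke properness of the $G$-action on the space $\Theta(M)$ of distinct triples of $M$ (Bowditch's characterization of convergence groups). Fix an auxiliary point $r \in M \setminus \{p, q\}$; all representatives $h_n g^k$ of the coset $h_n H$ satisfy $h_n g^k(p) = h_n p \to a$ and $h_n g^k(q) = h_n q \to b$, whereas $h_n g^k(r) = h_n(g^k r)$ varies with $k$. Using the north--south dynamics of $g$ on $M \setminus \{p, q\}$ (so that $g^k r$ transitions from near $p$ as $k \to -\infty$ to near $q$ as $k \to +\infty$), combined with the continuity of each $h_n$ at its bad point $p$, one chooses $k_n$ for each $n$ so that, along a subsequence, $h_n(g^{k_n} r) \to c$ with $c \notin \{a, b\}$. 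Then the triples $h_n g^{k_n}(p, q, r) \to (a, b, c) \in \Theta(M)$ eventually lie in a compact subset of $\Theta(M)$; since the $h_n g^{k_n}$ belong to pairwise distinct cosets and are therefore distinct in $G$, this contradicts properness. The main obstacle is precisely this last subcase $\eta \in \{p, q\}$: producing the intermediate accumulation point $c \notin \{a, b\}$ requires carefully balancing the collapse rate of $h_n$ near $p$ against the rate at which $g^{k_n} r$ approaches $p$, and exploiting that the two fixed points of the loxodromic $g$ play symmetric roles in $\Lambda(H)$.
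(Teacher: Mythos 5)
Your reduction to the loxodromic case, the setup $h_n p \to a \in K$, $h_n q \to b \in L$ with $a \neq b$, and the use of the convergence property to force the exceptional point $\eta$ into $\{p,q\}$ are all correct, and up to that point you are running parallel to the first half of the paper's Lemma~\ref{cyclicdyna}. The gap is exactly where you flag it: the existence of exponents $k_n$ with $h_n(g^{k_n}r) \to c \notin \{a,b\}$ is asserted rather than proved, and the mechanism you propose --- balancing the collapse rate of $h_n$ near $p$ against the rate at which $g^{k_n}r$ approaches $p$ --- is implicitly an intermediate-value argument. That fails in a general compactum: $M$ may be totally disconnected (a Cantor set, say), and for each fixed $n$ the set $\{h_n(g^k r)\}_{k\in\mathbb{Z}}$, whose closure is just a two-point compactification of $\mathbb{Z}$, can a priori lie entirely in $B(a,\epsilon_n)\cup B(b,\epsilon_n)$, jumping from one ball to the other without ever visiting an intermediate region. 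Continuity of $h_n$ at $p$ and any amount of rate bookkeeping do not exclude this, so as written the crucial point $c$ may simply fail to be produced.

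The claim is nevertheless true, but it needs a second application of the convergence property rather than a rate estimate. Suppose no such $c$ exists; then for disjoint balls $B(a,\epsilon)$, $B(b,\epsilon)$ and all large $n$, every $h_n(g^k r)$ lies in their union. Since $h_n(g^k r)\to h_n p$ (near $a$) as $k\to -\infty$ and $h_n(g^k r)\to h_n q$ (near $b$) as $k\to +\infty$ (taking $p$ to be the repelling fixed point of $g$), you may take $k_n$ to be the largest $k$ with $h_n(g^k r)\in B(a,\epsilon)$ and set $f_n=h_n g^{k_n}$. The four distinct points $p,q,r,gr$ then have $f_n$-images accumulating in $\overline{B(a,\epsilon)}$, $\overline{B(b,\epsilon)}$, $\overline{B(a,\epsilon)}$, $\overline{B(b,\epsilon)}$ respectively, while the convergence property applied to $(f_n)$ forces at least three of these images to converge to a single point --- impossible, since any three of $p,q,r,gr$ have images meeting both disjoint balls. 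With this dichotomy supplied, your argument closes, and it is genuinely different from the paper's: the paper invokes Bowditch's topological characterisation to realise $M$ equivariantly as the Gromov boundary of a proper hyperbolic graph and then derives the contradiction in Lemma~\ref{cyclicdyna} from the geometry of the bi-infinite geodesics joining $g_nx$ to $g_ny$, whereas your route stays inside $M$ and uses only properness of the action on the space of distinct triples, which is more self-contained once the missing step is repaired.
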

First of all, we record the following proposition:
\begin{prop}\label{prop1}
	Let $G$ be a group having a convergence action on a compact metrizable space $M$. Let $H$ be a subgroup of $G$ and $\Lambda H$ be its limit set. Then $H$ is dynamically quasi-convex if and only if for any sequence $(g_n)$ in distinct left cosests of $H$ in $G$ there exists a subsequence $(g_{\sigma(n)})$ of $(g_n)$ such that $g_{\sigma(n)}\Lambda H$ uniformly converges to a point. 
\end{prop}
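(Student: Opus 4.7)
The plan is to prove both directions by extracting Hausdorff-convergent subsequences from $(g_n\Lambda H)$ inside the hyperspace of closed subsets of $M$, using the fact that uniform convergence of $g_{\sigma(n)}\Lambda H$ to a point $\xi$ is precisely the assertion that the Hausdorff limit is the singleton $\{\xi\}$. I may assume $H$ is infinite, since otherwise $\Lambda H = \emptyset$ and both sides of the equivalence hold vacuously. Fix a compatible metric $d$ on $M$.

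For the forward direction, assume $H$ is dynamically quasi-convex and let $(g_n)$ be a sequence in distinct left cosets of $H$. Using that the hyperspace of non-empty closed subsets of the compact metric space $M$ is itself compact in the Hausdorff metric, I would pass to a subsequence along which $g_{\sigma(n)}\Lambda H$ converges in Hausdorff distance to some non-empty closed set $C \subseteq M$. The key step is to rule out that $C$ contains two distinct points. If $y, z \in C$ were distinct, I would choose disjoint closed neighborhoods $K$ of $y$ and $L$ of $z$, and observe by Hausdorff convergence that $g_{\sigma(n)}\Lambda H$ meets both $K$ and $L$ for all large $n$; since the cosets $g_{\sigma(n)}H$ are pairwise distinct, this contradicts dynamical quasi-convexity (Definition \ref{defn}(3)). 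Hence $C$ is a singleton $\{\xi\}$, and $g_{\sigma(n)}\Lambda H$ converges uniformly to $\xi$.

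For the reverse direction, suppose the uniform-convergence property holds and argue by contradiction: if $H$ failed to be dynamically quasi-convex, there would exist disjoint closed sets $K, L \subset M$ and a sequence $(g_n)$ in pairwise distinct cosets with $g_n\Lambda H$ meeting both $K$ and $L$ for every $n$. Applying the hypothesis to this sequence yields a subsequence $g_{\sigma(n)}\Lambda H$ converging uniformly to some point $\xi \in M$. Since $K, L$ are disjoint, $\xi$ lies outside at least one of them; say $\xi \notin L$, so $\delta := d(\xi, L) > 0$. Then for all sufficiently large $n$ the set $g_{\sigma(n)}\Lambda H$ sits inside the open ball $B(\xi, \delta/2)$, which misses $L$, a contradiction.

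I do not foresee a genuine obstacle: the only care needed is, in the forward direction, to ensure the Hausdorff limit is non-empty (automatic, since each $g_{\sigma(n)}\Lambda H$ is non-empty and each $g$ acts as a homeomorphism) and that distinctness of cosets is preserved when passing to a subsequence (automatic); in the reverse direction the bookkeeping with the metric $d$ is routine.
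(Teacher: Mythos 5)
Your argument is correct. Note that the paper does not actually supply a proof of this proposition --- it states that the proof ``follows directly from the definition of dynamically quasi-convex subgroup'' and skips it --- so there is nothing to compare against line by line; your write-up, based on compactness of the hyperspace of non-empty closed subsets of $M$ in the Hausdorff metric (Blaschke selection) together with the observation that uniform convergence of $g_{\sigma(n)}\Lambda H$ to a point is exactly Hausdorff convergence to a singleton, is a clean and complete way to fill in the omitted argument, and your handling of the edge case $\Lambda H=\emptyset$ and of the fact that $g\Lambda H$ depends only on the coset $gH$ is appropriate.
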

For relatively hyperbolic groups, the above proposition also appears in \cite[Proposition 1.8]{dahmni}. We skip proof of the above proposition as it follows directly from the definition of dynamically quasi-convex subgroup. For the definition of hyperbolic space and Gromov boundary, one is referred to \cite{bridsonhaefliger}. Before proving Lemma 2.5, we prove the following:
\begin{lemma}\label{cyclicdyna}
	Let $X$ be a proper geodesic hyperbolic space and let $G$ be a group acting by isometries on $X$. Suppose $G$ acts as a convergence group on $\partial X$($\partial X$ denotes Gromov boundary of $X$). Then, infinite cyclic subgroups of $G$ are dynamically quasi-convex.
\end{lemma}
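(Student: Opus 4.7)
The plan is to split on the dynamical type of the generator $g$ of $H = \langle g \rangle$, acting on $X$. Under the convergence-action hypothesis, no infinite-order isometry of $X$ can have a bounded orbit: by Arzela--Ascoli an infinite sequence of isometries with a bounded orbit at some point would subconverge uniformly on compact subsets of $X$ to an isometry of $X$, whose extension to $\partial X$ is a non-constant homeomorphism, directly contradicting the collapsing behaviour of a convergence sequence on $\partial X$. Hence $g$ acts either parabolically or hyperbolically on $X$. In the parabolic case $\Lambda H$ is a single point on $\partial X$, so for any two disjoint closed sets $K, L \subset \partial X$ the singleton $g\Lambda H$ cannot meet both, and dynamical quasi-convexity is vacuous.

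Assume then that $g$ is hyperbolic on $X$. Then $g$ preserves a quasi-axis $\ell \subset X$, translating it by some $\tau > 0$, with endpoints $\xi^{\pm} \in \partial X$; consequently $\Lambda H = \{\xi^+,\xi^-\}$. Arguing by contradiction, suppose closed disjoint $K, L \subset \partial X$ and infinitely many distinct cosets $(g_n H)$ are given such that each $g_n\Lambda H$ meets both $K$ and $L$; after passing to a subsequence, I arrange $g_n\xi^+ \in K$ and $g_n\xi^- \in L$ for every $n$. Each $g_n\ell$ is then a quasi-geodesic joining a point of $L$ to a point of $K$. Since $K\times L$ is a compact subset of $\partial X \times \partial X$ with $K\cap L = \emptyset$, the Gromov product $(k,l)_{x_0}$ is uniformly bounded there for any chosen basepoint $x_0 \in X$, so quasi-geodesic stability provides a radius $R$ with $g_n\ell \cap B(x_0,R) \neq \emptyset$ for every $n$. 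Fix $p \in \ell$; since $H$ translates $\ell$ by $\tau$, the orbit $g_n H\cdot p$ is $\tau$-spaced along $g_n\ell$ in the ambient metric, and I can choose integers $k_n$ with $d(g_n g^{k_n} p, x_0) \le R + \tau$.

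The elements $g_n g^{k_n}$ are pairwise distinct because they represent the pairwise distinct cosets $g_n H$, so they form an infinite family of isometries of $X$ sending the single point $p$ into the bounded ball $B(x_0, R+\tau)$. Applying once more the Arzela--Ascoli argument of the first paragraph, a subsequence converges uniformly on compact subsets of $X$ to a non-constant isometry, whose boundary extension is a non-constant homeomorphism of $\partial X$, again contradicting the convergence property. This contradiction proves the lemma. The main obstacle is exactly the initial deduction that no infinite sequence in $G$ can have a bounded orbit on $X$, i.e.\ the proper discontinuity of the $G$-action, from the convergence-group hypothesis on $\partial X$; once this is in hand the rest is a routine combination of $\delta$-hyperbolic geometry with the translation action of $H$ on its quasi-axis.
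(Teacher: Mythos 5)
Your proof is correct, but it is organized quite differently from the one in the paper, so a comparison is worth recording. The paper negates dynamical quasi-convexity via the coset-sequence criterion of Proposition \ref{prop1}, applies the convergence property of $G\curvearrowright\partial X$ directly to the resulting sequence $(g_n)$ to obtain attracting and repelling points $a,b$, and then runs a case analysis on whether an endpoint of $\Lambda\langle g\rangle$ equals $b$; the contradiction comes from the fact that the interior points of the geodesic $[x,y]$ all collapse to $a$ while the geodesics $g_n[x,y]$, having endpoints converging to two distinct boundary points, must keep returning to a fixed ball. You instead first extract from the convergence hypothesis the proper discontinuity of the $G$-action on $X$ itself (via Arzel\`a--Ascoli and the observation that a locally uniform limit of isometries induces an injective boundary map, which cannot be a pointwise limit of collapsing maps), and then run the classical ``translates of the axis through a compact ball'' argument: bounded Gromov products on $K\times L$ force every $g_n\ell$ to meet a fixed ball, cocompactness of $\langle g\rangle$ on its quasi-axis produces coset representatives $g_ng^{k_n}$ with uniformly bounded displacement of a point, and these infinitely many distinct elements contradict proper discontinuity. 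Your version is more self-contained and makes explicit a fact the paper only uses implicitly (its unproved assertion that $G$ acts as a convergence group on $X\cup\partial X$ ``with the same attracting and repelling points'' is essentially equivalent to your proper-discontinuity step), and it avoids the paper's case analysis on $b$; the paper's version stays entirely inside the dynamics on $\partial X$ once the sequence is fixed. One shared caveat: both arguments tacitly assume $|\partial X|\ge 3$ (your Arzel\`a--Ascoli contradiction, like the paper's final contradiction, fails when $\partial X$ has at most two points), but this is harmless since the lemma is only invoked in the non-elementary case, where the relevant boundary is a perfect compactum. You should also note, as you implicitly do, that the limit supplied by Arzel\`a--Ascoli is a priori only an isometric embedding; injectivity of its boundary extension is all your contradiction needs, so no surjectivity argument is required.
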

\begin{proof}
	Let $g\in G$ be an infinite order element. If $g$ is a parabolic element for $G\curvearrowright \partial X$. Then, clearly $\langle g\rangle$ is dynamically quasi-convex. Suppose $\Lambda(\langle g\rangle)=\{x,y\}$. Suppose $\langle g\rangle$ is not dynamically quasi-convex. Then, by the above proposition, there exists a sequence $(g_n)\subset G\setminus \langle g\rangle$ such that $g_n\Lambda(\langle g\rangle)$ converges to two distinct points $\xi,\eta$ (say). WLOG, assume that $g_nx\rightarrow \xi$ and $g_ny\rightarrow \eta$. Since the action of $G$ on $\partial X$ is convergence, there exists a subsequence $(g_{n_k})$ of $(g_n)$ and two points $a,b\subset \partial X$ such that for all $z\in \partial X\setminus\{b\}$, $g_{n_k}z$ converges to $a$. Note that $G$ also acts on $X\cup \partial X$ as a convergence group with the same attracting and repelling points. If $x\neq b$ and $y\neq b$ then both $g_{n_k}x,g_{n_k}y$ converge to $a$. This implies $\xi=\eta$, a contradiction. Now, suppose $x\neq b$ and $y=b$. Then $g_{n_k}x\rightarrow a$ and $g_{n_k}y \rightarrow \eta$. Thus, $a=\xi$. Now, all the points, except $y$, on bi-infinite geodesic ray joining $x$ and $y$ converge to $a$ under the action of $g_{n_k}$. Now, choose a point $p$ on bi-infinite geodesic ray joining $\xi$ and $\eta$ close enough to $\eta$ and consider a ball around $p$ of radius $R$ for some $R> 0$. Then this ball does not intersect the bi-infinite geodesic rays joining $g_{n_k}x$ to $g_{n_k}y$  for sufficiently large $k$. This is a contradiction as $g_nx$ and $g_ny$ converging to two different points in the boundary of a proper hyperbolic space. We get a similar contradiction when $x=b$ and $y\neq b$. Hence $\langle g \rangle$ is dynamically quasi-convex for $G\curvearrowright \partial X$.
\end{proof}
{\bf Proof of Lemma 2.5:} If $G$ acts on $M$ as an elementary convergence group, then every infinite cyclic subgroup is dynamically quasi-convex. Now, suppose $G$ acts on $M$ as a non-elementary convergence group. Let $Q$ be the set of all distinct triples. Then, by \cite{bowtopo},\cite[Proposition 6.4]{binsun}, there is a $G$-invariant hyperbolic path quasi-metric on $Q$. Thus, we can define the boundary, $\partial Q$, of $Q$ as in \cite{bowtopo}. By \cite[Proposition 4.7]{bowtopo}, $\partial Q$ is $G$-equivariantly homeomorphic to $M$. Now $\partial Q$ is compact as $M$ is compact. Then, by \cite[Proposition 4.8]{bowtopo}, there is a locally finite hyperbolic path quasi-metric space $Q'$ (quasi-isometric to $Q$) and $M$ is $G$-equivariantly homeomorphic to $\partial Q'$. As $Q'$ is locally finite path quasi-metric space, by \cite[Section 3]{bowtopo}, $Q'$ is $G$-equivariantly quasi-isometric to a locally finite graph $G_r(Q')$ for some $r\geq 0$. As $Q'$ is a locally finite hyperbolic path quasi-metric space, $G_r(Q')$ is a proper hyperbolic geodesic metric space. Also, $\partial Q'$ is $G$-equivariantly homeomorphic to $\partial G_r(Q')$. Finally, by the above discussion, $M$ is $G$-equivariantly homeomorphic to $\partial G_r(Q')$. Let $\phi$ be the homeomorphism induced by quasi-isometry from $M$ to $G_r(Q')$. Since $G$ acts on $M$ as a convergence group, $G$ also acts on $\partial G_r(Q')$ as a convergence group \cite{bowconvg}. Also, $G$ acts as a convergence group on $G_r(Q')\cup \partial G_r(Q')$. Suppose $g\in G$ such that order of $g$ is infinite. If $g$ is a parabolic element. Then, clearly $\langle g\rangle$ is dynamically quasi-convex for $G\curvearrowright M$. Now, suppose $g$ loxodromic for the action of $G$ on $M$. Then, $g$ is loxodromic for the action of $G$ on $\partial G_r(Q')$. By the lemma 2.7, $\langle g\rangle$ is dynamically quasi-convex subgroup of $G$ for $G\curvearrowright \partial G_r(Q')$. Now, let if possible $\langle g\rangle$ is not dynamically quasi-convex for $G\curvearrowright M$. Then there exist disjoint closed subsets $K,L$ of $M$ such that the set $\{g\in G\setminus \langle g\rangle| g\Lambda(\langle g\rangle)\cap K \neq \emptyset, g\Lambda(\langle g\rangle)\cap L \neq \emptyset\}$ is infinite. Since $\phi$ is a homeomorphism, $\phi(K),\phi(L)$ are disjoint closed subsets of $\partial G_r(Q')$. Then, by $G$-equivariance of $\phi$, $\langle g \rangle$ is not dynamically quasi-convex for $G\curvearrowright \partial G_r(Q')$. This gives us a contradiction.   \qed
%%%%%%%%%%%%%%%%%%%%%%%%%%%%%%%%%%%%%%%%%%%%%%%%%%%%%%%%%%%%%%%%%%%%%%%%%%%%%%%%%%%%%%%%%%%%%%%%%%%%%%%%%%%%%%%%%%%%%%%%%%%%%%
\section{Construction of boundary of graphs of groups}\label{3}
As we mentioned in the introduction, it is sufficient to consider the amalgam and the HNN extension case for proving our theorems. This will be our standing assumption for the next two sections. Let $\gma$ be an amalgamated free product or an HNN extension of convergence groups along with parabolic edge group. Let $T$ be the Bass-Serre tree of this splitting and let $\tau$ be a subtree of $T$, an edge in case of amalgam, and a vertex in case of HNN extension. Here, we construct our candidate space $M$ on which $\Gamma$ acts as a convergence group.

We fix some notation: If $v$ is a vertex of $T$, we write $\gma_v$ for its stabilizer in $\gma$. Similarly, for an edge $e$, we write $\gma_e$ for its stabilizer. For each vertex $v$ and edge $e$ incident on $v$, $\gma_v$ is a convergence group, and $\gma_e$ is a parabolic subgroup in $\Gamma_v$. We denote $X_v$ and $X_e$ as compact metrizable spaces on which the vertex group $\Gamma_v$ and the edge group $\Gamma_e$ act as convergence groups. In our situation, $X_e$ is singleton.

\subsection{Definition of $M$ as a set}\hfill\\

\textbf{Contribution of the vertices of $T$}

Let $\VV(\tau)$ be the set of vertices of $\tau$. For a vertex $v\in\VV(\tau)$, the group $\gma_v$ is by assumption a convergence group. So, we have compactum (compact metrizable space) $X_v$ for $\Gamma_v$. Set $\Omega$ to be $\gma \times(\displaystyle\bigsqcup_{v\in \VV(\tau)}X_v)$ divided by the natural relation\\
 \[(\gamma_1,x_1) = (\gamma_2,x_2) \text{ if }  \exists v\in \VV(\tau), x_i\in X_v,\gamma_2^{-1}\gamma_1 \in \gma_v, (\gamma_2^{-1}\gamma_1)x_1 =x_2 \text{ for } i=1,2\]
 
  In this way, $\Omega$ is the disjoint union of compactums corresponding to the  stabilizers of the vertices of $T$. Also, for each $v\in \VV(\tau)$, the space $X_v$ naturally embeds in $\Omega$ as the image of $\{1\}\times X_v$. We identify it with its image. The group $\gma$ naturally acts on the left on $\Omega$. For $\gamma\in \Gamma$, $\gamma X_v$ is the compactum for the vertex stabilizer $\gma_{\gamma v}$.
 
 \textbf{Contribution of the edges of $T$} 
 
 Each edge allows us to glue together compactums corresponding to the vertex stabilizers along with the limit set of the stabilizer of the edge. Each edge group embeds as a parabolic subgroup in adjacent vertex groups, so its limit set is a singleton. Let $e$ = $(v_1,v_2)$ be the edge in $\tau$, there exist equivariant maps $\Lambda_{e,v_i}:X_e \hookrightarrow X_{v_i}$ for $i=1,2$. Similar maps are defined by translation for edges in $T\setminus \tau$.
 
 The equivalence relation $\sim$ on $\Omega$ is the transitive closure of the following: Let $v$ and $v'$ be vertices of $T$. The points $x \in X_v$ and $x' \in X_{v'}$ are equivalent in $\Omega$ if there is an edge $e$ between $v$ and $v'$ and a point $x_e \in X_e$ satisfying $x = \Lambda_{e,v}(x_e)$ and $x' = \Lambda_{e,v'}(x_e)$ simultaneously. Let $\Omega/_{\sim}$ be the quotient under this relation and let $\pi':\Omega \map \Omega/_{\sim}$  be the corresponding projection. An equivalence class $[x]$ of an element $x\in \Omega$ is denoted by $x$ itself. \\ Let $\partial T$ be the (visual) boundary of the tree. We define $M'$ as a set: $M' = \partial T \sqcup (\Omega/_{\sim})$. 
 \begin{defn}\label{domain}
 	$\bf{(Domains)}$ For all $x \in \Omega/_{\sim}$, we define the {\em domain} of $x$ to be $D(x) = \{v\in \VV(T) | x\in \pi'(X_v)\}$. We also say that domain of a point $\xi\in \partial T$ is $\{\xi\}$ itself.
 \end{defn}
\subsection{Final construction of $M$}
 It turns out that $M'$ with the topology defined in \cite{dahmni} is not a Hausdorff space (see \ref{reason} ). Therefore we need to modify $M'$ further. For getting the desired space, we  further define an equivalence relation on the set $M'$. Firstly observe that the domain of each element in $\omg$ is either singleton or an infinite subtree of $T$. So, we also identify the boundary points of infinite domain $D(x)$ in $\partial T$ to $x$ itself. By considering the equivalence relation generated by these relations, we denote the quotient of $M'$ by $M$. Again, $M$ can be written as a disjoint union of two sets of equivalence classes :
 \begin{center}
 	$M = \Omega' \sqcup (\partial T)'$
 \end{center} 
 where $\Omega'$ (as a set it is same as $\omg$) is the set of equivalence classes of elements in $\omg$, and, $(\partial T)'$ is the equivalence classes of the remaining elements in $\partial T$ as some elements of $\partial T$ are identified with parabolic points of edge groups. The equivalence class of each remaining element in $\partial T$ is a singleton. We write $p$ for an element of $M$, if $p\in \Omega'$ then $p=x,y,z,...$ and if $p\in (\partial T)'$ then $p=\xi,\eta,\zeta,...$.
 \begin{rem}
 	In $M$, we define domain of each element as previously defined. Additionally for those points $\eta$ of $\partial T$ which are identified with parabolic  points $x$ of edge groups, we define domain of $\eta$ same as that of $x$. 
 \end{rem}
 It is clear that for each $v$ in $T$, the restriction of projection map $\pi'$ from  $X_v$ to $\omg$ is injective. Let $\pi''$ be the projection map from $M'$ to $M$. Let $\pi$ be the composition of the restriction of $\pi''$ to $\omg$ and $\pi'$. Again it is clear that restriction of $\pi$ to $X_v$ is injective for all $v$ in $T$.
\subsection{Definition of neighborhoods in M} \hfill\\

 We define a family $(W_n(p))_{n\in \mathbb{N},p\in M}$ of subsets of $M$, which generates a topology on $M$. For a vertex $v$ and an open subset $U$ of $X_v$, we define the subtree $T_{v,U}$ of $T$ as $\{w\in \VV(T) : X_e\cap U\neq \emptyset\}$, where e is the first edge of $[v,w]$. For each vertex $v$ in $T$, let us choose $\UU(v)$, a countable basis of open neighborhoods of $X_v$. Without loss of generality, we can assume that for all $v$, the collection of open subsets $\UU(v)$ contains $X_v$. Let $x$ be in $\Omega'$ and $D(x) = \{v_1,...,v_n,...\} = (v_i)_{i\in I}$. Here $I$ is a subset of $\mathbb{N}$. For each $i\in I$, let $U_i \subset X_{v_i}$ be an element of $\UU(v_i)$ containing $x$ such that for all but finitely many indices $i\in I, U_i=X_{v_i}$. The set $W_{(U_i)_i\in I}(x)$ is the disjoint union of three subsets 
 \begin{center}
 	$W_{(U_i)_{i\in I}}(x) = A \cup B\cup C$,
 \end{center} 
 
 where $A$ is nothing but the collection of all boundary points of subtrees $T_{v_i,U_i}$ which are not identified with some parabolic point corresponding to edge group, $B$ is collection of all points $y$ outside $\bigcup_{i\in I}X_{v_i}$ in $\Omega'$ whose domains lie inside $\bigcap_{i\in I}T_{v_i,U_i}$, $C$ is simply the union of all neighborhood $U_i$ around $x$ in each vertex of $D(x)$. In notations $A$,$B$,$C$ are defined as follows:
\begin{align*}
A &= (\bigcap_{i\in I} \partial T_{v_i,U_i}) \cap (\partial T)'\\
B &= \{y \in \Omega'\setminus(\bigcup_{i\in I}X_{v_i}) | D(y) \subset \bigcap_{i\in I}T_{v_i,U_i}\}\\
C &=\{y \in \bigcup_{j\in I}X_{v_j} | y \in \bigcup_{m\in I | \zeta \in X_{v_m}}U_m\}
\end{align*}

 As $A \subset \pt$, the remaining elements in $\displaystyle\bigcap_{i\in I} \partial T_{v_i,U_i}$ are in $B$. In this way, $A$, $B$, $C$ are disjoint subsets of $M$. 
\begin{rem}
	The set $W_{(U_i)_{i\in I}}(x)$ is completely defined by the data of the domain of $x$, the data of a finite subset $J$ of $I$, and the data of an element of $\UU(v_j)$ for each index $j\in J$. Therefore there are only countably many different sets  $W_{(U_i)_{i\in I}}(x)$, for $x\in \Omega'$, and $U_i\in \UU(v_i),v_i\in D(x)$. For each $x$, we choose an arbitrary order and denote them by $W_m(x)$.
\end{rem}
Now, we  define neighborhoods around the points of $\pt$. Let $\xi\in \pt$ and choose a base point $v_0$ in $T$. Firstly we define the subtree $T_m(\xi)$: it consists of the vertices $w$ such that $[v_0,w] \cap [v_0,\xi)$ has length bigger than $m$. We set $W_m(\xi) = \{\zeta \in M | D(\zeta) \subset T_m(\xi)\}$. This definition does not depend on the choice of base point $v_0$, up to shifting the indices.
\begin{lemma}\label{avoid}
	\emph{(Avoiding an edge)} Let $p$ be a point in $M$ and $e$ an edge in $T$ with at least one vertex not in $D(p)$. Then there exists an integer $m$ such that $W_n(p) \cap X_e = \emptyset$.
\end{lemma}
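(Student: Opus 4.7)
The plan is to split the analysis according to whether $p$ lies in $\pt$ or in $\Omega'$. If $p = \xi \in \pt$, the domain $D(\xi) = \{\xi\}$ contains no vertex of $T$, so for either endpoint $v$ of $e$ we have $v \neq \xi$, and the initial segment $[v_0, v] \cap [v_0, \xi)$ has some finite length $L$. Then $v \notin T_m(\xi)$ for all $m \geq L$. Since every point $y \in X_e$ is identified with its image in $X_v$ and hence satisfies $v \in D(y)$, no such $y$ lies in $W_m(\xi) = \{\zeta \in M \mid D(\zeta) \subset T_m(\xi)\}$; this settles the case with $m := L$.

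For $p = x \in \Omega'$, pick a vertex $v$ of $e$ outside $D(x)$ and let $v_{\ast}$ be the vertex of $D(x)$ closest to $v$ (which exists because $D(x)$ is a subtree of $T$). Let $e_{\ast}$ be the first edge of $[v_{\ast}, v]$ and set $q_{\ast} := \Lambda_{e_{\ast}, v_{\ast}}(x_{e_{\ast}}) \in X_{v_{\ast}}$. The crucial observation is that $x$ and $q_{\ast}$ are distinct in $X_{v_{\ast}}$: otherwise the equivalence relation would identify $x$ with $\Lambda_{e_{\ast}, v_{\ast\ast}}(x_{e_{\ast}}) \in X_{v_{\ast\ast}}$, where $v_{\ast\ast}$ is the other endpoint of $e_{\ast}$, placing $v_{\ast\ast} \in D(x)$ and contradicting minimality of $v_{\ast}$. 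By Hausdorffness of $X_{v_{\ast}}$, I would choose $U_{v_{\ast}} \in \UU(v_{\ast})$ a neighbourhood of $x$ missing $q_{\ast}$, and set $U_{v_i} := X_{v_i}$ for the other $v_i \in D(x)$. This data yields one of the enumerated basic neighbourhoods $W_m(x) = A \cup B \cup C$.

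The verification that $x_e \notin A \cup B \cup C$ (using $X_e = \{x_e\}$) proceeds piece by piece. Inclusion in $A$ is impossible since $A \subset \pt$. For $B$, the first edge of $[v_{\ast}, v]$ is $e_{\ast}$ and $\Lambda_{e_{\ast}, v_{\ast}}(X_{e_{\ast}}) = \{q_{\ast}\}$ misses $U_{v_{\ast}}$, so $v \notin T_{v_{\ast}, U_{v_{\ast}}}$; as $v \in D(x_e)$, this gives $D(x_e) \not\subset \bigcap_i T_{v_i, U_i}$ and hence $x_e \notin B$. For $C$, everything reduces to the combinatorial claim
\[
D(x) \cap D(x_e) \subset \{v_{\ast}\},
\]
because once this holds, the only index $v_m \in D(x)$ with $x_e \in X_{v_m}$ is $v_{\ast}$, where $x_e$ is represented by $q_{\ast} \notin U_{v_{\ast}}$, ruling out membership in $C$.

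The main obstacle is this combinatorial claim. I would prove it via a tripod argument in $T$: given $v_m$ in the intersection, let $c$ be the centre of the tripod on $\{v_{\ast}, v, v_m\}$. Since $[v_{\ast}, v_m] \subset D(x)$ and $[v, v_m] \subset D(x_e)$, the vertex $c$ lies in $D(x) \cap D(x_e)$, and it also lies on $[v_{\ast}, v]$. By minimality of $v_{\ast}$, the only vertex of $[v_{\ast}, v]$ belonging to $D(x)$ is $v_{\ast}$ itself, so $c = v_{\ast}$. If $v_m \neq v_{\ast}$, then the first edge $e'$ of $[v_{\ast}, v_m]$ would have to satisfy simultaneously $x = \Lambda_{e', v_{\ast}}(x_{e'})$ (so that the class $x$ propagates along $[v_{\ast}, v_m]$ to $v_m \in D(x)$) and $x_e = \Lambda_{e', v_{\ast}}(x_{e'})$ (so that $x_e$ propagates from $v_{\ast} \in D(x_e)$ along the same geodesic to $v_m \in D(x_e)$), since $X_{e'}$ is a singleton and every single-edge identification in $\sim$ is through its unique point. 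This forces $x = x_e$ in $X_{v_{\ast}}$, contradicting the fact that their $\sim$-classes differ (as $v \in D(x_e) \setminus D(x)$). Hence $v_m = v_{\ast}$, which completes the proof.
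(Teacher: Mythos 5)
Your proof is correct and follows essentially the same route as the paper: reduce the $\pt$ case to the definition of $T_m(\xi)$, and in the $\Omega'$ case pass to the closest vertex $v_\ast$ of $D(p)$ to $e$, use that the image of the first edge space $X_{e_\ast}$ in $X_{v_\ast}$ is a single point distinct from $p$, and separate them by a basic neighbourhood. The only difference is that you spell out the verification that $W_m(x)\cap X_e=\emptyset$ piece by piece over $A$, $B$, $C$ (including the claim $D(x)\cap D(x_e)\subset\{v_\ast\}$), which the paper treats as evident.
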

\begin{proof}
	If $p\in \pt$, then both of the vertices of $e$ are not in $D(p)$. The lemma follows by taking one of the vertices as base and $m \geq1$. If $p\in \Omega'$ then a unique segment exists from $D(p)$ to the edge $e$. Let $v$ be the vertex from where this segment starts and $e_0$ be the first edge. Then $X_{e_0}$ does not contain $p$. So to find a $W_m(p)$ such that it does not intersect $X_e$, it is sufficient to find a neighborhood around $p$ in $X_v$ which does not intersect $X_{e_0}$. But it is evident as $X_{e_0}$ is just a point.  
\end{proof}
\subsection{Topology of M}  \hfill\\

Consider the smallest topology $\TT$ on $M$ such that the family of sets $\{W_n(p): n\in \mathbb{N},p\in M\}$ are open subsets of $M$.
\begin{lemma}
	$(M,\TT)$ is a Hausdorff space
\end{lemma}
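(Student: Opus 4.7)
The plan is a case analysis on whether each of $p,q$ lies in $\Omega'$ or in $\pt$, and on the position of their domains in $T$. Two structural observations drive the argument. First, the restriction of $\pi$ to $X_v$ is injective, so if two points of $\Omega'$ share a vertex in their domains, they are distinct in the Hausdorff compactum $X_v$. Second, since every $X_e$ is a singleton, if two distinct points $p,q\in\Omega'$ had their domains share an edge $[v,v']$, then both would equal the common edge-parabolic point (any chain of edge-identifications across a tree-edge in $T$ must pass through the singleton image of that edge), forcing $p=q$; hence $D(p)\cap D(q)$ contains no edge, so it is empty or a single vertex.

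Case~1 ($p,q\in\pt$): the rays $[v_0,p)$ and $[v_0,q)$ diverge, so for $m$ large $T_m(p)\cap T_m(q)=\emptyset$; any $\zeta\in W_m(p)\cap W_m(q)$ would have $D(\zeta)\subset T_m(p)\cap T_m(q)=\emptyset$, impossible. Case~2 ($p\in\Omega'$, $q\in\pt$): since $q$ is not identified with any point of $\Omega'$, the ray $[v_0,q)$ exits $D(p)$ at a last vertex $v$ through an edge $e$, and the singleton image of $X_e$ in $X_v$ differs from $p$ (else the far endpoint of $e$ would lie in $D(p)$). Choose $U_v\in\UU(v)$ containing $p$ but missing that image, and form $W_{(U_i)_{i\in I}}(p)$ with $U_v$ at $v$ and $U_{v_i}=X_{v_i}$ elsewhere in $D(p)$; pair it with $W_m(q)$ for $m$ so large that $T_m(q)$ lies past $e$. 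Every element of the $A$- and $B$-pieces of $W_{(U_i)_{i\in I}}(p)$ has its domain inside $T_{v,U_v}$, while every element of the $C$-piece has some vertex of $D(p)$ in its domain; in both cases the domain cannot be contained in $T_m(q)$, so the neighborhoods are disjoint.

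The main case is $p,q\in\Omega'$, where by the second observation $D(p)\cap D(q)$ is either a single vertex $v$ or empty. If $D(p)\cap D(q)=\{v\}$, use injectivity of $\pi|_{X_v}$ to pick disjoint $U_p,U_q\in\UU(v)$ separating $p,q$ in $X_v$, and take full spaces at the other vertices of $D(p),D(q)$. For any edge $e'$ incident to $v$, the singleton $X_{e'}$ lies in at most one of $U_p,U_q$, so $T_{v,U_p}\cap T_{v,U_q}=\{v\}$; this kills all cross-intersections of the $A$- and $B$-pieces. The delicate point is $C_p\cap C_q$: a common point $y$ would have to satisfy $y\in X_{v_m}$ for some $v_m\in D(p)$ and $y\in X_{w_k}$ for some $w_k\in D(q)$, whence $D(y)$ is a subtree containing the $T$-geodesic $[v_m,w_k]$; since $D(p)\cap D(q)=\{v\}$, that geodesic passes through $v$, forcing $y\in X_v$ and hence $y\in U_p\cap U_q=\emptyset$. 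If instead $D(p)\cap D(q)=\emptyset$, take the geodesic in $T$ from $D(p)$ to $D(q)$, landing at $v\in D(p)$ and ending at $w\in D(q)$, pick opens in $X_v,X_w$ missing the two terminal edge-parabolic points as in Case~2, and observe that the resulting subtrees $T_{v,U_v}$ and $T_{w,U_w}$ sit on opposite sides of that geodesic, making the $A$/$B$/$C$ verification routine.

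The main obstacle I anticipate is the $C_p\cap C_q$ bookkeeping in the shared-vertex subcase: it is exactly here that the structural fact about singleton edge-spaces (ruling out shared edges in domains) becomes indispensable, because without it one could not force the putative intersection point $y$ into $X_v$ and the separation would collapse. This is also precisely the defect that makes Dahmani's original space $M'$ fail to be Hausdorff once domains are allowed to be infinite, and it is what the further identifications in passing from $M'$ to $M$ are designed to repair.
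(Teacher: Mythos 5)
Your proof is correct and follows essentially the same route as the paper's: split on whether $D(p)\cap D(q)$ is empty or a single vertex, separate via an edge avoided by both neighborhoods in the first case, and via disjoint opens in the Hausdorff compactum $X_v$ in the second. You supply more detail than the paper does — notably the justification (via singleton edge spaces) that two distinct points of $\Omega'$ cannot share an edge in their domains, and the explicit $A$/$B$/$C$ bookkeeping — but the underlying argument is the same.
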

\begin{proof}
	Let $p$ and $q$ be two distinct points in $M$. If $D(p) \cap D(q) = \emptyset$. Then there is a unique geodesic segment from a vertex of $D(p)$ to a vertex of $D(q)$ in $T$ having an edge $e$ on this segment such that both vertices of $e$  are neither contained in $D(p)$ nor in $D(q)$. Then using the above lemma, we can find disjoint neighborhoods around $p$ and $q$. Now, suppose $D(p) \cap D(q) \neq \emptyset$. Then there is only one vertex in this intersection, let $D(p) \cap D(q) = \{v\}$. Since $X_v$ is Hausdorff,  we can find disjoint open subsets of $X_v$ around $p$ and $q$, respectively. Using these neighborhoods in $X_v$, we have found disjoint neighborhoods around $p$ and $q$ in $M$.
\end{proof}
\begin{rem}\label{reason}
	The reason why we need to define a further equivalence relation on the set $M'$ is the following:
	
	In $M'$, if we consider an edge group parabolic point $p$, then $D(p)$ is infinite subtree of $T$, and there are uncountably many boundary points of $D(p)$. If we take one boundary point $\eta$ of $D(p)$, then on the geodesic ray $[v_0,\eta)$ there is no edge with at least one vertex not belonging to $D(p)$, where $v_0$ is some vertex in $D(p)$. Thus we can not find a disjoint neighborhood around $\eta$ and $p$. Same kind of situation will arise if we prove that $M'$ is a regular space.
\end{rem}
\begin{lemma}
	\emph{(Filtration)} For every $p\in M$, every integer n, and every $q\in W_n(\xi)$, there exists $m$ such that $W_m(q) \subset W_n(p)$.
\end{lemma}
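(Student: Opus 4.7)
The plan is to verify the filtration property by case analysis on whether each of $p$ and $q$ lies in $\Omega'$ or $(\partial T)'$. In each case the task is to exhibit the data defining a basic neighborhood $W_m(q)$---an integer $m$ if $q\in(\partial T)'$, or a finite collection of open sets $V_k\subset X_{w_k}$ for $w_k\in D(q)$ if $q\in\Omega'$---so that the resulting neighborhood sits inside $W_n(p)$.

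When $p,q\in(\partial T)'$, the condition $q\in W_n(p)$ forces $[v_0,p)$ and $[v_0,q)$ to share an initial segment of length some $N>n$. Any $m\ge N$ yields $T_m(q)\subset T_n(p)$, and $W_m(q)\subset W_n(p)$ follows immediately by domain containment. When $p\in(\partial T)'$ and $q\in\Omega'$, we have $D(q)\subset T_n(p)$; since both are subtrees and $T_n(p)$ is rooted at the vertex $u_{n+1}$ on $[v_0,p)$, there exists a vertex $w^*\in D(q)$ closest to $v_0$, whose neighbor $w^{**}$ toward $v_0$ lies outside $D(q)$. I would then choose $V^*\in\UU(w^*)$ containing $q$ and missing the (singleton) limit point of the edge $(w^*,w^{**})$ in $X_{w^*}$. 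This is possible by Hausdorffness, because $q$ cannot coincide with that point---otherwise the identifications across $(w^*,w^{**})$ would force $w^{**}\in D(q)$, contradicting the choice of $w^*$. Setting $V_k=X_{w_k}$ for $k\neq *$, the subtree $T_{w^*,V^*}$ then lies inside $T_n(p)$ since it consists of $w^*$ together with the branches of $T$ away from $v_0$.

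When $p\in\Omega'$ with $D(p)=(v_i)_{i\in I}$ and $J\subset I$ the finite set of indices with $U_j\neq X_{v_j}$, the argument is analogous with $T_n(p)$ replaced by $\bigcap_{j\in J}T_{v_j,U_j}$. If $q\in(\partial T)'$, I would take $m$ large so that $T_m(q)\subset\bigcap_{j\in J}T_{v_j,U_j}$, which is possible because $q\in\bigcap_{j\in J}\partial T_{v_j,U_j}$. If $q\in\Omega'$, I would select $V_k$'s in the relevant $X_{w_k}$ to enforce both $T_{w_k,V_k}\subset\bigcap_{j\in J}T_{v_j,U_j}$ and $V_k\subset U_j$ whenever $w_k=v_j$ for some $j\in J$; in each such $X_{w_k}$ only finitely many parabolic points of ``bad'' edges need to be excluded, and each is distinct from $q$ by the same structural reason as before.

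Given these choices, the verification $W_m(q)\subset W_n(p)$ reduces to inspecting the three pieces $A,B,C$ of $W_m(q)$: points in $A$ land in $W_n(p)$ by containment of visual boundaries, points in $B$ by containment of domains, and points in $C$ by the compatibility $V_k\subset U_j$ in overlapping coordinates together with the exclusion of bad parabolic points. The main obstacle is the last case, where both $p,q\in\Omega'$: one must verify that only finitely many coordinates $k\in K$ require $V_k\neq X_{w_k}$ and that in each such coordinate $q$ differs from every bad parabolic point. This reduces to the structural principle that $D(q)\subset\bigcap_{j\in J}T_{v_j,U_j}$ prohibits edge identifications from dragging $q$'s equivalence class outside the prescribed subtree; the rest of the argument is tree-theoretic bookkeeping in the spirit of the proof of Lemma~\ref{avoid}.
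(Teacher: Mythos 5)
Your proposal is correct and follows essentially the same route as the paper's proof: a case analysis on whether $p$ and $q$ lie in $\Omega'$ or $(\partial T)'$, reducing everything to choosing the finitely many nontrivial open sets at the domain vertices of $q$ so as to avoid the (singleton) parabolic points of the edges separating $D(q)$ from the complement of the prescribed subtree, exactly in the spirit of the ``avoiding an edge'' lemma. You are in fact somewhat more careful than the paper in the $\Omega'$--$\Omega'$ case, where you make explicit both the compatibility $V_k\subset U_j$ at shared domain vertices and the fact that two distinct domains meet in at most one vertex (since edge spaces are singletons), which is the point guaranteeing the excluded bad parabolic points cannot re-enter through another coordinate.
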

\begin{proof}
	Suppose $p\in \pt$ and $W_n(p)$ is a neighborhood around $p$. Let $q\in W_n(p)$. If $q\in \pt$ then choose $m=n$ and $W_m(q) = W_n(p)$. Let $q$ be  some point in $\Omega'$. Suppose the subtree $T_n(p)$ starts at the vertex $v$ and $e$ be the last edge on the segment from a base vertex to $v$ then except $X_e$, all the points in $M$ corresponding to subtree $T_n(p)$ is in $W_n(p)$. By this observation, it is clear that there exists a neighborhood $W_m(q)$ such that $W_m(q)\subset W_n(p)$ for sufficiently large $m$. Now, suppose that $p\in \Omega'$ and $q\in W_n(p)$. If $D(p)\cap D(q) = \emptyset$, then there exists an edge $e$ on a unique geodesic segment from $D(p)$ to $D(q)$. Then by Lemma \ref{avoid}, one can find a neighborhood $W_m(q)$ sitting inside $W_n(p)$. Finally, if $D(p)\cap D(q) \neq \emptyset$, then it is a singleton and let this intersection be $\{v\}$. As $q\in W_n(p)$, $q$ is in some $U_i$, where $U_i$ is in  neighborhood basis of $X_{v_i}$ and $D(p) = (v_i)_{i\in I}$. Then find a neighborhood $V_i$ around $q$ sitting inside $U_i$ and then using this $V_i$ we see that we have found $W_m(q)$ such that $W_m(q)\subset W_n(p)$.
\end{proof}
\begin{lemma}
	The family of sets $\{W_n(p) : n\in \mathbb{N},p\in M\}$ forms a basis for the topology $\mathcal{T}$.
\end{lemma}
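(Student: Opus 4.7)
The plan is to verify the two standard axioms for a basis, using the Filtration lemma as the main input. First, the covering condition: every point $p\in M$ lies in some $W_n(p)$. Indeed, for $p=\xi\in(\partial T)'$, the set $W_0(\xi)$ contains $\xi$ by construction; for $p=x\in\Omega'$ with $D(x)=(v_i)_{i\in I}$, the choice $U_i=X_{v_i}$ for every $i\in I$ defines a valid $W_{(U_i)_{i\in I}}(x)$, which is one of the $W_n(x)$ and plainly contains $x$. So the $W_n(p)$ cover $M$.

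Next, given a point $q\in W_n(p_1)\cap W_m(p_2)$, I need to exhibit an index $k$ with $W_k(q)\subset W_n(p_1)\cap W_m(p_2)$. The Filtration lemma, applied once for each of the two neighborhoods, produces indices $k_1,k_2$ such that $W_{k_1}(q)\subset W_n(p_1)$ and $W_{k_2}(q)\subset W_m(p_2)$. So it suffices to prove a ``downward directedness'' property: for any two sets $W_{k_1}(q),W_{k_2}(q)$ in the enumeration around $q$, some $W_k(q)$ in that enumeration is contained in their intersection.

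To establish the downward directedness, I split on $q$. If $q=\xi\in(\partial T)'$, the neighborhoods $W_m(\xi)=\{\zeta\in M:D(\zeta)\subset T_m(\xi)\}$ are literally nested as $m$ increases (since $T_{m+1}(\xi)\subset T_m(\xi)$), so taking $k=\max(k_1,k_2)$ works. If $q=x\in\Omega'$ with $D(x)=(v_i)_{i\in I}$, the two sets are of the form $W_{(U_i^{(1)})_{i\in I}}(x)$ and $W_{(U_i^{(2)})_{i\in I}}(x)$ for finite ``support'' subsets $J_1,J_2\subset I$ (outside of which $U_i^{(\cdot)}=X_{v_i}$). For each $j\in J_1\cup J_2$ I pick, using that $\mathcal U(v_j)$ is a neighborhood basis of $x$ in $X_{v_j}$, some $V_j\in\mathcal U(v_j)$ with $V_j\subset U_j^{(1)}\cap U_j^{(2)}$ (where $U_j^{(\ell)}$ is taken to be $X_{v_j}$ if $j\notin J_\ell$), and set $V_i=X_{v_i}$ for $i\notin J_1\cup J_2$. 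The resulting $W_{(V_i)_{i\in I}}(x)$ is one of the enumerated $W_k(x)$, and inspecting each of the three pieces $A,B,C$ in the definition shows directly that it is contained in both $W_{(U_i^{(1)})_{i\in I}}(x)$ and $W_{(U_i^{(2)})_{i\in I}}(x)$; for $A$ and $B$ one uses that the subtrees $T_{v_i,V_i}$ shrink when $V_i$ shrinks, and for $C$ one uses $V_j\subset U_j^{(\ell)}$ on each affected vertex. Combining this with the reduction above yields the basis property.

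The only mildly subtle point, and the step I expect to require the most care, is the inclusion $A_{V}\subset A_{U^{(1)}}\cap A_{U^{(2)}}$ when comparing the $(\partial T)'$-parts, because one must verify that boundary points of the smaller subtree $\bigcap T_{v_i,V_i}$ which happen to be identified with edge parabolic points behave correctly under the comparison; this is handled by the very definition of the equivalence relation generating $M$, which guarantees that such identifications are preserved by shrinking the $U_i$'s. Everything else is bookkeeping on top of the Filtration lemma.
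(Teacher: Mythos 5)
Your proposal is correct and follows essentially the same route as the paper: reduce to the case of two basic sets around a common point $q$ via the Filtration lemma, then produce a third $W_k(q)$ inside their intersection. The only difference is that you spell out the downward directedness step (nestedness of the $T_m(\xi)$, and choosing $V_j\subset U_j^{(1)}\cap U_j^{(2)}$ from $\mathcal U(v_j)$ together with monotonicity of the subtrees $T_{v_i,U_i}$ in $U_i$), which the paper simply asserts.
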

\begin{proof}
	Using the previous lemma, it remains to prove that if $W_{n_1}(p_1)$ and $W_{n_2}(p_2)$ are two neighborhoods and $q\in  W_{n_1}(p_1) \cap W_{n_2}(p_2)$ then there exists a neighborhood around $q$, namely $W_m(q)$ such that $W_m(q) \subset  W_{n_1}(p_1) \cap W_{n_2}(p_2)$. Again by the previous lemma there exists $m_1$ and $m_2$ such that $W_{m_1}(q) \subset W_{n_1}(p_1)$ and $W_{m_2}(q)\subset W_{n_1}(p_2)$. Now $W_k(q)\subset W_{m_1}(q) \cap W_{m_2}(q)$ for some $k$, hence the lemma.
\end{proof}
\begin{lemma}
For each $v\in \VV(T)$, the restriction of $\pi$ to $X_v$ to $M$ is a continuous map. 
\end{lemma}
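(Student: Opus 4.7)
The plan is to verify continuity of $\pi|_{X_v} : X_v \to M$ pointwise. Fix $x \in X_v$. Since $\{W_n(p) : n \in \mathbb{N},\, p \in M\}$ is a basis for the topology $\mathcal{T}$ by the preceding lemma, it suffices to exhibit, for every basic open set $W_n(p)$ containing $\pi(x)$, an open neighbourhood $U$ of $x$ in $X_v$ with $\pi(U) \subseteq W_n(p)$. I would first reduce to basic neighbourhoods centred at $\pi(x)$ itself: applying the Filtration Lemma with $q := \pi(x) \in W_n(p)$ yields some $m$ with $W_m(\pi(x)) \subseteq W_n(p)$, so it is enough to produce an open $U \subseteq X_v$ with $x \in U$ and $\pi(U) \subseteq W_m(\pi(x))$.

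Observe that $\pi(x) \in \Omega'$, since $X_v \subseteq \omg$ and $\pi''$ sends $\omg$ into $\Omega'$. Hence $W_m(\pi(x))$ is defined, as in Section 3.3, from the domain $D(\pi(x)) = (v_i)_{i \in I}$ together with a choice of $U_i \in \UU(v_i)$ containing $\pi(x)$ in $X_{v_i}$, with $U_i = X_{v_i}$ for all but finitely many $i$. Because $x \in X_v$ we have $v \in D(\pi(x))$; say $v = v_{i_0}$. Then $U_{i_0} \in \UU(v)$ is an open subset of $X_v$, and by injectivity of $\pi|_{X_v}$ (noted at the end of Section 3.2) it is in particular an open neighbourhood of $x$ in $X_v$. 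By the definition of the $C$-component of $W_m(\pi(x)) = A \cup B \cup C$, one has $U_{i_0} \subseteq C \subseteq W_m(\pi(x))$ when $U_{i_0}$ is viewed inside $M$. Taking $U := U_{i_0}$ therefore does the job.

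I do not expect any real obstacle here: the argument is essentially tautological, since the $C$-component of a basic neighbourhood at a point $p \in \Omega'$ is by design a union of open neighbourhoods of $p$ inside the vertex spaces $X_{v_i}$ parametrised by its domain. The Filtration Lemma is the only non-trivial ingredient, and it serves purely as a bookkeeping device to pass from an arbitrary $W_n(p) \ni \pi(x)$ to one centred at $\pi(x)$.
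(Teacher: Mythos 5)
Your argument is correct and follows essentially the same route as the paper, which simply notes that by the definition of the neighbourhoods $W_n(\pi(x))$ their preimages under $\pi|_{X_v}$ are open; you have merely unpacked this by making explicit the reduction (via the Filtration Lemma) to neighbourhoods centred at $\pi(x)$ and the observation that the $C$-component contains the open set $U_{i_0}\in\UU(v)$. The only cosmetic remark is that openness of $U_{i_0}$ in $X_v$ comes directly from $U_{i_0}\in\UU(v)$ and does not require the injectivity of $\pi|_{X_v}$.
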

\begin{proof}
	Let $x$ be an element of $X_v$, and $\pi(x)$ be its image in $M$. We denote this image by $x$. Consider the neighborhood $W_n(x)$ around $x$. Then by definition of neighborhoods, it is clear that the inverse image of this neighborhood under $\pi$ is an open subset of $X_v$. Hence restriction of $\pi$ to $X_v$ is continuous.
\end{proof}
\begin{lemma}
	\emph{(Regularity)} The topology $\mathcal{T}$ is regular, that is, for all $p\in M$ and for all $W_m(p)$ there exists $n$ such that $\overline{W_n(p)} \subset W_m(p)$.
\end{lemma}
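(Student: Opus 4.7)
The plan is to handle the two cases $p \in (\partial T)'$ and $p \in \Omega'$ separately. In each case, given the basic open $W_m(p)$, I construct a smaller basic open $W_n(p)$ by shrinking the defining data, and then verify $\overline{W_n(p)} \subset W_m(p)$ by exhibiting, for every $q \notin W_m(p)$, a basic open neighborhood of $q$ disjoint from $W_n(p)$. The main tools are the compact metrizability (hence regularity) of each vertex space $X_v$, the subtree structure of domains, and the avoiding-an-edge Lemma \ref{avoid}.

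For $p \in (\partial T)'$ the data of $W_m(p)$ is just the subtree $T_m(p)$, so I take $n = m+1$; then $T_n(p) \subsetneq T_m(p)$ is separated from the complement of $T_m(p)$ by the edge $[v_m, v_{m+1}]$ on the ray from the chosen base vertex $v_0$ to $p$. Given $q \notin W_m(p)$, there exists $w \in D(q) \setminus T_m(p)$. Since $p \in (\partial T)'$ is not identified with any parabolic edge point, the ray $[v_0, p)$ cannot be eventually contained in $D(q)$; combined with the subtree structure of $D(q)$, this yields an edge $e$ with at least one endpoint outside $D(q)$ that separates $D(q)$ from $T_n(p)$. Lemma \ref{avoid} provides $W_k(q)$ with $W_k(q) \cap X_e = \emptyset$, and a direct inspection of the definitions of $A, B, C$ shows that $W_k(q) \cap W_n(p) = \emptyset$: every $r \in W_n(p)$ has $D(r) \subset T_n(p)$, lying on the far side of $e$, while $W_k(q)$ lies on the $q$-side.

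For $p \in \Omega'$, write $W_m(p) = W_{(U_i)_{i \in I}}(p)$ with $I = D(p)$ and let $J \subset I$ be the finite set of indices with $U_i \neq X_{v_i}$. By regularity of each $X_{v_i}$, I pick $V_i \in \mathcal{U}(v_i)$ for $i \in J$ with $p \in V_i$ and $\overline{V_i} \subset U_i$, and set $V_i = X_{v_i}$ for $i \notin J$; let $W_n(p) := W_{(V_i)_{i \in I}}(p)$. A point $q \notin W_m(p)$ violates one of the defining conditions: either (i) $q \in X_{v_j}$ with $q \notin U_j$ for some $j \in J$, in which case regularity of $X_{v_j}$ gives an open neighborhood of $q$ inside $X_{v_j}$ disjoint from $\overline{V_j}$, which extends via Lemma \ref{avoid} to a neighborhood of $q$ in $M$ disjoint from $W_n(p)$; or (ii) some vertex $w \in D(q)$ lies outside $\bigcap_{i \in J} T_{v_i, U_i}$, in which case an edge separating $w$ from the tighter subtree $T_{v_i, V_i}$ (the buffer provided by $\overline{V_i} \subset U_i$ making the separation effective) combined with Lemma \ref{avoid} produces the required disjoint neighborhood.

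The main obstacle is handling the case in which both $p$ and $q$ have infinite domains (parabolic edge points propagating across infinitely many edges). The finiteness of $J$ ensures that the regularity-based shrinking only needs to be carried out in finitely many vertex spaces, so the many coordinates $i \notin J$ contribute nothing to the separation. The identification, built into the definition of $M$, of boundary points of infinite domains with their corresponding parabolic points is what prevents pathological accumulations: it guarantees that for $p \in (\partial T)'$ the ray to $p$ eventually exits any given $D(q)$, and that in the $\Omega'$ case the finite-tree-level analysis via Lemma \ref{avoid} suffices to separate $q$ from $W_n(p)$.
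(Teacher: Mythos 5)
Your overall strategy --- splitting into $p\in(\partial T)'$ and $p\in\Omega'$, shrinking the defining data, and separating each $q\notin W_m(p)$ from the smaller neighborhood via regularity of the vertex spaces and Lemma \ref{avoid} --- is the same as the paper's, and your treatment of the case $p\in\Omega'$ agrees with the paper's (shrink each $U_i$ to $V_i$ with $\overline{V_i}\subset U_i$, using that only finitely many $U_i$ differ from $X_{v_i}$). However, in the case $p\in(\partial T)'$ the specific choice $n=m+1$ is not sufficient, and your separation step fails at exactly one point. Write $v_0,v_1,v_2,\dots$ for the vertices along $[v_0,p)$ and $e_k$ for the edge from $v_k$ to $v_{k+1}$. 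Since edge groups need not be maximal parabolic, the images of $X_{e_m}$ and $X_{e_{m+1}}$ in $X_{v_{m+1}}$ may coincide; call the resulting point $q\in\Omega'$. Its domain $D(q)$ is an infinite subtree containing $v_m$, so $q\notin W_m(p)$ (as $v_m\notin T_m(p)$), but $D(q)$ also contains $v_{m+2}\in T_{m+1}(p)$. Every basic neighborhood of $q$ therefore contains a nonempty open subset of $X_{v_{m+2}}$, and every point of that subset other than $q$ has domain contained in the subtree hanging off $v_{m+2}$ away from $v_{m+1}$, hence lies in $W_{m+1}(p)$; since $q$ is not isolated in $X_{v_{m+2}}$, we get $q\in\overline{W_{m+1}(p)}\setminus W_m(p)$, so $\overline{W_{m+1}(p)}\not\subset W_m(p)$. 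Concretely, the edge $e$ you produce (where the ray $[v_0,p)$ exits $D(q)$) sits at some depth $k$ possibly much larger than $m+1$; it separates $D(q)$ from the tail $T_k(p)$ but not from $T_{m+1}(p)$, so the assertion that every $r\in W_{m+1}(p)$ lies on the far side of $e$ is false for points supported at depths between $m+1$ and $k$.

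The repair is the one the paper makes: take $n$ \emph{sufficiently large} rather than $n=m+1$. Because $p\in(\partial T)'$ is not identified with any edge parabolic point, the domain of the edge point at the entrance of $T_m(p)$ meets the ray $[v_0,p)$ in only a finite segment, say ending at $v_N$. For $n>N$ the unique extra point of $\overline{W_n(p)}$ is $X_{e_n}$, and its domain cannot reach back to $v_m$: if it did, it would contain the adjacent vertices $v_{m+1}$ and $v_{m+2}$, forcing $X_{e_n}=X_{e_{m+1}}$ and hence $v_{n+1}\in D(X_{e_{m+1}})$, contradicting $n>N$. Thus $D(X_{e_n})\subset T_m(p)$, i.e.\ $X_{e_n}\in W_m(p)$, and $\overline{W_n(p)}\subset W_m(p)$. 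With this choice of $n$ the rest of your argument goes through.
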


\begin{proof}
	Case(1) Let $p\in \pt$ and $W_m(p)$ be a neighborhood around $p$. Let $v$ be a vertex from where the subtree $T_m(\xi)$ starts. Let $e$ be the last edge of the geodesic segment from $v_0$ to $v$. Observe that closure of $W_m(p)$ contains only one extra point, namely $X_e$. By taking $n$ to be sufficiently large, we see $\overline{W_n(p)}\subset W_m(p)$.
	
	Case(2) Let $p\in \Omega' $ and let $W_m(p)$  be a neighborhood around $p$. Let $D(p) = (v_i)_{i\in I}$. Again observe that in the closure of $W_m(p)$ only extra points are the points in the closure of each $U_i$ in $X_{v_i}$.(If some point is not in the closure of $U_i$ then one can easily find a neighborhood around that point disjoint from $W_m(p)$). Also since each vertex boundary is regular so choose a neighborhood $V_i$ of $\xi$ in $X_{v_i}$ such that $\overline{V_i} \subset U_i$. Then $\overline{W_{\{V_i\}_{i\in I}}(p)}\subset W_m(p)$.
\end{proof}

\textbf{Note:} Now by previous lemmas, we see that the topology on $M$ is second countable, Hausdorff and regular. Then by Urysohn's metrization theorem, we see that $M$ is metrizable space. Also, $M$ is a perfect space as every point of $M$ is a limit point. Finally, we have the following convergence criterion: \label{convergence criterion}

A sequence $(p_n)_{n\in \mathbb{N}}$ in $M$ converges to a point $p$ if and only if $\forall n$, $\exists m_0\in \mathbb{N}$ such that $\forall m > m_0$, $p_m \in W_n(p)$.
\begin{theorem}
	The metrizable space $M$ is compact.
\end{theorem}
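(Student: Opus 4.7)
The plan is to prove compactness of $M$ by establishing sequential compactness, which suffices since $M$ was shown to be metrizable by the preceding Urysohn argument. Given a sequence $(p_n)_{n\in\mathbb{N}}$ in $M$ and a basepoint $v_0\in T$, I would split into two cases: (I) some fixed vertex $v$ satisfies $v\in D(p_n)$ for infinitely many $n$, or (II) every vertex lies in $D(p_n)$ for only finitely many $n$.

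In case (I), pass to a subsequence with $v\in D(p_n)$ for all $n$, so each $p_n$ is the image under $\pi$ of a unique point $x_n\in X_v$. Compactness of $X_v$ yields a further convergent subsequence $x_n\to x$, and continuity of $\pi|_{X_v}$ (already established) promotes this to $p_n\to\pi(x)$ in $M$. Case (II) is the substantive one and requires a level-by-level tree extraction. For each $n$, let $\alpha_n$ be the geodesic in $T$ from $v_0$ to the vertex of $D(p_n)$ closest to $v_0$ (when $p_n\in\Omega'$) or the ray $[v_0,p_n)$ (when $p_n\in(\partial T)'$); the assumption in case (II) forces $|\alpha_n|\to\infty$. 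Inductively build a geodesic path $v_0,v_1,v_2,\dots$ in $T$ and a nested sequence of subsequences: having fixed $v_0,\dots,v_k$ so that $\alpha_n$ traverses these, examine the next edge $e_n^{k+1}$ of $\alpha_n$ out of $v_k$ together with its parabolic point $y_n^{k+1}=\Lambda_{e_n^{k+1},v_k}(X_{e_n^{k+1}})\in X_{v_k}$. Either only finitely many distinct such edges occur, in which case pass to a subsequence with a common next edge $e_{k+1}$, set $v_{k+1}$ to be its other endpoint, and continue; or infinitely many distinct edges occur, in which case compactness of $X_{v_k}$ yields a subsequence with $y_n^{k+1}\to y\in X_{v_k}$, and the process halts.

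If the extension never halts, a standard diagonal subsequence satisfies $D(p_n)\subset T_N([\xi])$ eventually for every $N$, where $\xi\in\partial T$ is the boundary point determined by the ray $v_0,v_1,v_2,\dots$; by the convergence criterion stated before the theorem, $p_n\to[\xi]$ in $M$. If the process halts at step $k$ with $y_n^{k+1}\to y$, then the claim is that $p_n\to[y]$. For a basic neighborhood $W_{(U_i)_{i\in I}}([y])$ with $I=D([y])$, one checks $D(p_n)\subset T_{v_i,U_i}$ for each $v_i\in I$ as follows: $D([y])$ is a subtree of $T$ containing $v_k$; under (II), eventually no $v_i\in I$ lies in $D(p_n)$, and every path from $v_i$ to a vertex $w\in D(p_n)$ factors through $v_k$ via the edge $e_n^{k+1}$. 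When $v_i=v_k$, the first edge of $[v_i,w]$ is $e_n^{k+1}$ with parabolic point $y_n^{k+1}\in U_{v_k}$ eventually; when $v_i\neq v_k$, the first edge lies inside $D([y])$ and therefore carries the parabolic point $y\in U_i$. Thus $D(p_n)\subset T_{v_i,U_i}$ for every $i$, so $p_n$ lies in the set $B$ of the neighborhood $W_{(U_i)_{i\in I}}([y])$ eventually.

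The main obstacle is precisely this last verification in the halting case: when $y$ is itself an edge parabolic point, its domain $D([y])$ can be an infinite subtree and one must simultaneously control the escape of $D(p_n)$ through every vertex in this subtree. The reduction above hinges on the tree-combinatorial fact that $D([y])$ and $D(p_n)$ are disjoint subtrees joined only through $v_k$ via $e_n^{k+1}$, together with the observation that edges internal to $D([y])$ carry the single parabolic point $y$ in each vertex space they touch. All other ingredients reduce to the already-established properties: metrizability, compactness of the vertex spaces $X_v$, continuity of $\pi|_{X_v}$, and the explicit convergence criterion for the topology on $M$.
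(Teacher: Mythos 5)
Your overall strategy---sequential compactness, a dichotomy between domains that revisit a fixed vertex and domains that escape to infinity, a level-by-level extraction along the tree, and compactness of the vertex space applied to the singleton edge points---is the same as the paper's, which organizes the dichotomy via Gromov products $(v_n,v_m)_v$ bounded versus unbounded but otherwise runs identically. The one step where your argument as written breaks is the ``tree-combinatorial fact'' you rely on in the halting case, namely that $D([y])$ and $D(p_n)$ are disjoint subtrees joined only through $v_k$ via $e_n^{k+1}$. This fails precisely when $y_n^{k+1}=y$, i.e.\ when $e_n^{k+1}$ itself carries the limiting parabolic point: then $e_n^{k+1}$ is an edge of the (possibly infinite) subtree $D([y])$, the two domains meet through a gate vertex strictly beyond $v_k$, and the first edge out of that gate toward $D(p_n)$ carries a parabolic point over which your extraction gives no control. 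This is not a marginal case; it is exactly the non-maximal-parabolic situation the whole construction is designed for (the paper's own proof notes parenthetically that all the edge points $X_{e_{n_k}}$ may coincide with the limit). The conclusion survives, but closing it needs three further observations: only the finitely many indices $j$ with $U_j\neq X_{v_j}$ impose nontrivial conditions; by your case (II) hypothesis each such fixed $v_j$ eventually fails to lie in $D(p_n)$, which also disposes of the possibility $D(p_n)\cap D([y])\neq\emptyset$; and after the extraction the segments $[v_k,w_n]$ pairwise meet only in $v_k$, so a fixed gate vertex other than $v_k$ can occur for at most one $n$. With these inserted the halting case closes, and your verification is then in fact more detailed than the paper's, which asserts convergence in this case by appeal to the convergence criterion without spelling out the neighborhood check.
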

\begin{proof}
	It is sufficient to prove that $M$ is sequentially compact. Let $(p_n)_{n\in \mathbb{N}}$ be a sequence in $M$. Let us choose  a vertex $v$ in $T$ and for each $n$, choose a vertex $v_n$ (if $p_n\in \pt$ then choose $v_n = p_n$ ) in $D(p_n)$. We see that up to extraction of a subsequence, either the Gromov inner products $(v_n,v_m)_v$ remain bounded or they go to infinity. In the latter case $(v_n)$ converges to a point $q$ in $\partial T$. If $q$ is the point which is identified with some edge boundary point then there is a ray in the domain of that edge boundary point converging to $q$, then by definition of the neighborhood around $q$, we see that $p_n$ converges to $q$. If $q\in\pt$ then again there is a ray converges to $q$, and by convergence criterion, $p_n$ converges to $q$. Now, in the first case up to extracting a subsequence, we assume that Gromov inner products is equal to some constant $N$. Let $g_n$ be the geodesic from $v$ to $v_n$ then there exist  geodesic $g = [v,v']$ of length $N$ such that $g$ lies in each $g_n$. For different $n$ and $m$, $g_n$ and $g_m$ do not have a common prefix whose length is longer than $g$. We have the following two cases:
	
	Case(1): There exist a subsequence $(g_{n_k})$ such that $g_{n_k} = g$. Then in this case as $X_v'$ is compact; we get a subsequence of $(p_n)_{n\in \mathbb{N}}$ which converges to a point of $X_v'$.
	
	Case(2): There exists a subsequence $(g_{n_k})$ of $(g_n)$ such that each $g_{n_k}$ strictly longer than $g$. Let $e_{n_k}$ be the edges just after $v'$; they all are distinct. As for each edge $e$, $X_e$ is a singleton, so $(X_{e_{n_k}})$ forms a sequence of points in $X_{v'}$. Since $X_{v'}$ is compact, there exists a subsequence which converges to a point in $X_{v'}$. Then, by convergence criterion, we see that there exists a subsequence of $(p_n)$ converging to this point of $X_{v'}$. (It may be possible that all the $X_{n_k}$ are equal to $p$ for some $p$ in $X_{v'}$, sequence converges to $p$ in this situation also.)
	
\end{proof}

\section{Dynamics of $\gma$ on $M$} \label{paracaseproof}

In this section, we prove that $\gma$ acts on $M$ as a convergence group. For that we need the following two lemmas.
\begin{lemma}
	\emph{(Large Translation)}\label{large} Let $(\gamma_n)_{n\in \mathbb{N}}$ be a sequence in $\gma$. Assume that, for some (hence any) vertex $v_0\in T$, $dist(v_0,\gamma_nv_0) \rightarrow \infty$. Then, there is a subsequence $(\gamma_{\sigma(n)})_{n\in \mathbb{N}}$, two points $p\in M$ and $\zeta\in \pt$ such that for all compact $K\subset (M\setminus \{\zeta\})$, $\gamma_{\sigma(n)} K$ converges uniformly to $p$. 
\end{lemma}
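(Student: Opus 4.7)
The plan is to push the elementary dynamics on the Bass-Serre tree $T$ up to the compactum $M$, whose topology has been rigged precisely to make this lift work. Fix a base vertex $v_0\in T$. Since $\Gamma$ acts by isometries on $T$ and $d(v_0,\gamma_n v_0)\to\infty$, we would first pass to a subsequence for which $\gamma_n v_0\to\xi_+$ and $\gamma_n^{-1}v_0\to\xi_-$ in the visual boundary $\partial T$. The candidates are: $p$ the equivalence class of $\xi_+$ in $M$, and $\zeta$ that of $\xi_-$. Depending on whether these visual boundary points are identified with edge parabolic points in the construction of $M$, the candidates may land in $(\partial T)'$ or in $\Omega'$, but in either case they are well-defined single points of $M$ and only the equivalence class matters for the statement.

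\textbf{Pointwise convergence.} Let $L_n,L_n'\to\infty$ denote the fellow-travel lengths of $[v_0,\gamma_n v_0]$ with $[v_0,\xi_+)$ and of $[v_0,\gamma_n^{-1}v_0]$ with $[v_0,\xi_-)$, respectively. For any $x\in M\setminus\{\zeta\}$, I first argue that the domain $D(x)$ has bounded projection onto $[v_0,\xi_-)$: otherwise $D(x)$ would contain a cofinal subray of $[v_0,\xi_-)$, and by the identifications in \secref{3} this forces $x=\zeta$. Fixing $R=R(x)$ beyond which $D(x)$ does not reach along $[v_0,\xi_-)$, the next step is a standard tripod computation in $T$ using $L_n'>R$: for every $v\in D(x)$ the geodesic $[v_0,\gamma_n v]$ shares an initial segment of length at least $\min(L_n,\,d(v_0,\gamma_n v_0)-R)$ with $[v_0,\xi_+)$. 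Intersecting over $v\in D(x)$ shows that the projection of $D(\gamma_n x)=\gamma_n D(x)$ toward $v_0$ lies on $[v_0,\xi_+)$ at depth tending to infinity. Unwinding the neighborhood basis $\{W_m(p)\}_m$ --- which for $p\in(\partial T)'$ is simply the family $\{x\in M:D(x)\subset T_m(\xi_+)\}$, and for $p\in\Omega'$ is built so that its $B$-piece contains every point whose domain lies deep along an exiting ray of $D(p)$ --- gives $\gamma_n x\in W_m(p)$ for every $m$ and all sufficiently large $n$, i.e., $\gamma_n x\to p$.

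\textbf{Uniform convergence on compacta.} For compact $K\subset M\setminus\{\zeta\}$, I would upgrade pointwise to uniform convergence as follows. The family $\{W_m(\zeta)\}_m$ is a countable basis at $\zeta$, and by the regularity lemma above it can be taken nested with $\overline{W_{m+1}(\zeta)}\subset W_m(\zeta)$. Compactness of $K$ then yields a single integer $N_0$ with $W_{N_0}(\zeta)\cap K=\emptyset$, which in tree terms gives a uniform bound $R_K$ on the projection of $D(x)$ onto $[v_0,\xi_-)$ valid for all $x\in K$. Substituting $R_K$ for $R$ in the estimate of the previous paragraph produces a lower bound $\min(L_n,d(v_0,\gamma_n v_0)-R_K)\to\infty$ independent of $x\in K$; this is exactly the uniform convergence of $\gamma_n K$ to $p$.

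\textbf{Main obstacle.} The subtlety I expect to be hardest is the precise verification, when $p\in\Omega'$ has an infinite domain $D(p)$ with $\xi_+$ as one of many visual boundary points, that the open set $W_m(p)=A\cup B\cup C$ really contains every point of $M$ whose domain sits sufficiently deep along $[v_0,\xi_+)$. This will amount to careful bookkeeping: choosing $U_i=X_{v_i}$ for all but finitely many $i\in I$ in the definition of $W_m(p)$ and tracing through $\bigcap_{i\in I}T_{v_i,U_i}$ to exhibit the candidate point in the $B$-piece. Once this is unwound, the whole argument becomes a clean tree-dynamics lift in the spirit of Dahmani, and the convergence of the action of $\Gamma$ on $M$ will follow by combining this lemma with a small-translation case handled by standard subgroup-action arguments.
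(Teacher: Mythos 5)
There is a genuine gap at the very first step. You extract a subsequence so that $\gamma_n v_0\to\xi_+$ and $\gamma_n^{-1}v_0\to\xi_-$ in the visual boundary $\partial T$, and everything downstream (the definition of $p$ and $\zeta$, the bounded-projection argument, the tripod estimate) is conditioned on the existence of $\xi_\pm$. But the Bass--Serre tree of an amalgam or HNN extension with infinite-index edge groups is not locally finite, so $T\cup\partial T$ is not sequentially compact: a sequence of vertices with $d(v_0,\gamma_n v_0)\to\infty$ can have uniformly bounded pairwise Gromov products $(\gamma_nv_0,\gamma_mv_0)_{v_0}$, in which case no subsequence of $(\gamma_nv_0)$ converges to any point of $\partial T$. (For instance, take $\gamma_n=a_n\delta^n$ with $\delta$ hyperbolic on $T$ and $a_n$ ranging over distinct cosets of the edge group in a vertex group: the geodesics $[v_0,\gamma_nv_0]$ have length $\to\infty$ but pairwise branch at $v_0$.) In that regime the attracting point is \emph{not} the class of any end of $T$: it is the limit, inside the compact vertex space $X_{v'}$ at the branch vertex, of the edge-points $X_{e_n}$ of the distinct departing edges, and is typically a point of $\Omega'$ with singleton domain. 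The paper avoids this by never compactifying $T$: it defines $p$ as the limit of $\gamma_{\sigma(n)}p_0$ for a point $p_0\in X_{v_0}$, using the sequential compactness of $M$ already established, and then shows that \emph{every} translate $\gamma_{\sigma(n)}X_v$ converges uniformly to this $p$ (via common prefixes of $[v_0,\gamma_{\sigma(n)}v_0]$ and $[v_0,\gamma_{\sigma(n)}v]$), so that the at-most-one exceptional point, located by a degenerate-ideal-triangle argument, necessarily lies in $(\partial T)'$. To repair your argument you would have to split into the two cases of the paper's compactness proof (Gromov products bounded versus unbounded) and handle the bounded case separately, at which point you are essentially reconstructing the paper's proof.

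A secondary mismatch: even when $\xi_-$ exists, your candidate $\zeta=[\xi_-]$ may lie in $\Omega'$ (namely when $\xi_-$ is an end of the infinite domain $D(q)$ of an edge parabolic point $q$, so that $[\xi_-]=q$), whereas the lemma asserts $\zeta\in(\partial T)'$. In that situation $q$ is in fact \emph{not} exceptional --- the identification of $\partial D(q)$ with $q$ and the shape of the neighborhoods $W_{(U_i)}(q)$ (with $U_i=X_{v_i}$ for almost all $i$) force $\gamma_{\sigma(n)}q\to p$ along with the rest of $\Omega'$ --- so designating $[\xi_-]$ as the repelling point both misidentifies where the exceptional point lives and excludes a point that need not be excluded. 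Your bounded-projection and uniformization-by-compactness steps are sound as far as they go, but they do not substitute for these missing cases.
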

\begin{proof}
	Let $p_0$ be in $X_{v_0}$. Since $M$ is sequentially compact, there exists a subsequence $\sub$ such that $(\gamma_{\sigma(n)}p_0)_{n\in\mathbb{N}}$ converges to a point $p\in M$. Also, we still have $dist(v_0, \gamma_{\sigma(n)}v_0) \rightarrow \infty$. Let $v_1$ be another vertex in $T$. The lengths of the segments $[\gamma_nv_0, \gamma_nv_1]$ equal to the length of $[v_0, v_1]$. As the $dist(v_0, \gamma_{\sigma(n)}v_0)$ goes to $\infty$, for all $m$ there is $n_m$ such that for all $n > n_m$, the segments $[v_0, \gamma_{\sigma(n)}v_0]$ and $[v_0, \gamma_{\sigma(n)}v_1]$ have prefix of length more than $m$. Then, by convergence criterion \ref{convergence criterion}, for all $v\in T$, $\gamma_{\sigma(n)}X_v$ converges uniformly to $p$. Let $\zeta_1,\zeta_2 \in \pt$. As triangles in $\overline{T}$ are degenerate, so the triangle with vertices $v_0,\zeta_1,\zeta_2$ have center some vertex $v$ in $T$. Therefore for all $m\geq 0$, the segments $[v_0, \gamma_{\sigma(n)}v_0]$ and $[v_0, \gamma_{\sigma(n)}v]$ coincide on a subsegment of length more than $m$ for sufficiently large $n$. Then at least for one $\zeta_i$, the sequence of rays $[v_0,\gamma_{\sigma(n)}\zeta_i]$ have a common prefix of length at least $m$. Then by convergence criterion $\gamma_{\sigma(n)}\zeta_i$ converges to $p$. Therefore, there exists at most one (since for any two points in $\pt$ at least one of them converges to $p$ under the action of $\gamma_n$) $\zeta$ in $\pt$ such that for all $\zeta'\in (\pt\setminus\{\zeta\})$, we have $\gamma_{\sigma(n)}\zeta'\rightarrow p$.
	
	Let $K$ be a compact subspace of $M\setminus \{\zeta\}$. Then there exists a vertex $v$, $x\in \Omega'$, and a neighborhood $W_n(x)$ of $x$ containing $K$ such that $\zeta \notin W_n(x)$. Consider the segment $[v_0,v]$. As by the discussion at the beginning of proof, the sequence $(\gamma_{\sigma(n)}X_v)_{n\in \mathbb{N}}$ uniformly converges to $p$, the sequence $(\gamma_{\sigma(n)}W_m(x))_{n\in \mathbb{N}}$ uniformly converges to $p$. Hence the convergence is uniform on $K$.
\end{proof}
\begin{lemma}
	\emph{(Small Translation)}\label{small} Let $(\gamma_n)_{n\in \mathbb{N}}$ be a sequence of distinct elements in $\gma$, and assume that for some (hence any) vertex $v_0$, the sequence $(\gamma_n v_0)_n$ is bounded in $T$. Then there exists a subsequence $\sub$, a vertex $v$, a point $p\in X_v$, and another point $p'\in \Omega'$, such that for all compact subspaces $K$ of $M\setminus \{p'\}$, one has $\sub K \rightarrow p$ uniformly. 
\end{lemma}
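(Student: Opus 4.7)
The plan is to reduce the lemma to the convergence action of a single vertex stabilizer on its compactum, and then to propagate uniform convergence from that one vertex to the whole of $M$ using the explicit description of the neighborhoods $W_n(\cdot)$ from \secref{3}. Because $(\gamma_n v_0)$ is bounded in $T$, first I would extract a subsequence along which $\gamma_n v_0 = w$ is constant for some vertex $w$ of $T$; even when $T$ is not locally finite, this is achievable by pigeonholing on the countable set of values of $(\gamma_n v_0)$, if necessary after using the sequential compactness of $M$ applied to $(\gamma_n x_0)$ for some non-parabolic $x_0 \in X_{v_0}$ in order to pin down a specific vertex. Fix $\delta \in \Gamma$ with $\delta v_0 = w$ and write $\gamma_n = \beta_n \delta$ with $\beta_n \in \Gamma_w$; distinctness of the $\gamma_n$ gives distinctness of the $\beta_n$. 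Since $\Gamma_w$ acts on $X_w$ as a convergence group, after one more extraction I obtain an attractor $p \in X_w$ and a repeller $q \in X_w$ such that $\beta_n y \to p$ uniformly on each compact subset of $X_w \setminus \{q\}$. Set $p' := \delta^{-1} q$, which lies in $X_{v_0} \subset \Omega'$. Using the fact that $\delta$ acts as a homeomorphism of $M$, the desired uniform convergence $\gamma_n K \to p$ on compact $K \subset M \setminus \{p'\}$ is equivalent, via $K' := \delta K$, to the statement that $\beta_n K' \to p$ uniformly for every compact $K' \subset M \setminus \{q\}$.

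To obtain this propagation of convergence from $X_w$ to $M$, I would argue as follows. Given compact $K' \subset M \setminus \{q\}$, pick $m$ so that $W_m(q) \cap K' = \emptyset$; for simplicity, first assume that $q$ is not itself an edge parabolic point, so $D(q) = \{w\}$ and $W_m(q)$ is specified by a single open neighborhood $U_q \subset X_w$ of $q$. Split $K'$ into $K' \cap X_w$ and $K' \setminus X_w$. On $K' \cap X_w \subset X_w \setminus U_q$, uniform convergence to $p$ is exactly the convergence property of $\Gamma_w$ on the compact set $X_w \setminus U_q$. For $y \in K' \setminus X_w$, the domain $D(y)$ enters the vertex $w$ through a unique edge $e$ incident on $w$, and since $X_e$ is a singleton $\{x_e\}$ (edge groups are parabolic) and $y \notin W_m(q)$, one has $x_e \in X_w \setminus U_q$. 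Applying the uniform convergence of $\beta_n$ on the compact set $X_w \setminus U_q$, for any prescribed neighborhood $U$ of $p$ in $X_w$ there exists $n_0$ such that $\beta_n x_e \in U$ for all $n \geq n_0$ and all such $x_e$ simultaneously; since $\beta_n y$ has its domain entering $w$ through the edge $\beta_n e$ with edge parabolic point $\beta_n x_e$, this forces $\beta_n y$ into the neighborhood $W_N(p)$ of $p$ determined by $U$. Combining the two pieces gives $\beta_n K' \subset W_N(p)$ for all large $n$, which is the claimed uniform convergence.

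The main obstacle I expect is the book-keeping when $p$ or $q$ is itself an edge parabolic point, so that $D(p)$ or $D(q)$ is an infinite subtree of $T$ rather than the singleton $\{w\}$. In that case $W_N(p)$ and $W_m(q)$ are specified by finitely many open sets in finitely many vertex compacta $X_v$ with $v \in D(p)$ or $v \in D(q)$ respectively, and the argument above has to be run in each relevant vertex and then patched together. The reason this still works is exactly the feature that is already used in \lemref{avoid}: since edge groups are parabolic, every $X_e$ is a singleton, and hence the ``first edge parabolic point from a vertex of $D(p)$ in the direction of $y$'' is always a well-defined point of the corresponding vertex compactum, to which the convergence property of the corresponding vertex stabilizer can be applied.
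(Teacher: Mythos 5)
There is a genuine gap at the very first step: you assume that, after extraction, $\gamma_n v_0$ can be made constant, and your whole argument (writing $\gamma_n=\beta_n\delta$ with $\beta_n$ in a single vertex stabilizer $\Gamma_w$ and invoking the convergence property of $\Gamma_w\curvearrowright X_w$) rests on that reduction. But boundedness of $(\gamma_n v_0)$ in $T$ does not yield a constant subsequence, because $T$ is not locally finite in the situations this lemma is built for: each edge group is a parabolic subgroup of the adjacent vertex groups and typically has infinite index there, so a ball of finite radius in $T$ contains infinitely many vertices. Pigeonholing on a countably infinite set of values fails, and applying sequential compactness of $M$ to $(\gamma_n x_0)$ only produces a limit point in $M$, not a vertex $w$ with $\gamma_n v_0=w$ for infinitely many $n$. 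So your proof covers only one of the two cases that actually occur.

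The paper splits the proof accordingly. Its Case (1) is exactly your situation ($\gamma_n=h_n\gamma$ with $h_n$ in a fixed vertex stabilizer), and there your propagation of the attractor through the singleton edge spaces is essentially the paper's argument, including the extra book-keeping you flag when the repeller is an edge parabolic point with infinite domain. Its Case (2) is the one you are missing: after extraction the segments $[v_0,\gamma_n v_0]$ share a maximal common prefix $[v_0,v]$ while the edges $e_n$ located just after $v$ are pairwise distinct. Here the attractor arises differently: the singleton edge spaces $X_{e_n}$ form a sequence of points in the compact space $X_v$, hence subconverge to some $p\in X_v$, and the convergence criterion then gives $\gamma_n K\to p$ uniformly for \emph{every} compact $K\subset M$, with no exceptional point $p'$ needed. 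Without this second case the lemma, and hence the convergence property of $\Gamma$ on $M$, is not established.
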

\begin{proof}
	We prove the lemma in two cases.
		
	Case(1): Assume that for some vertex $v$ and some element $\gamma\in\gma$ there exists a subsequence $(h_n)_{n\in \mathbb{N}}$ in $\gma_v$ such that $\gamma_n = h_n\gamma$ for all $n$. Since $\gma_v$ acts as a convergence group on $X_v$. Then one can further extract a subsequence of $(\gamma_n)$ (we shall denote it again by $\gamma_n$) and a point $p'$ in $X_{\gamma^{-1}v}$ such that for all compact subsets $K$ of $X_{\gamma^{-1}v}\setminus\{p'\}$, $\gamma_nK\rightarrow p$ uniformly, for some $p\in X_v$. Suppose $p'$ is a conical limit point. then it will not be in any edge boundary contained in $X_{\gamma^{-1}v}$. Let $e_n$ be the possible edges starting from the vertex $\gamma^{-1}v$. For any  $q \in  M\setminus\{X_{\gamma^{-1}v}\}$, we see that unique edge path from $\gamma^{-1}v$ to $w$ contains $e_n$, for some n and $q\in X_w$. Since $\gamma_nr\rightarrow p$ for all $r$ in $X_{\gamma^{-1}v}\setminus\{p'\}$ then by convergence criterion, we see that $\gamma_n q$ converges to $p$, and the same is true for the points in $\pt$. Hence for all compact $K \subset M\setminus\{p'\}$ we have $\gamma_nK \rightarrow p$ uniformly. Suppose $p'$ is a parabolic point, $\gamma p'$ is also a parabolic point in $X_v$. Then $\gamma_n p'$ also converges to $p$. Otherwise, $\gamma p'$ is a conical limit point. Thus again, by the same argument as above we see that for all $q\in M$, $\gamma_nq \rightarrow p$.
	
	Case(2): Suppose such a sequence $(h_n)_{n\in \mathbb{N}}$ and a vertex $v$ does not exist. After possible extraction we can assume that the distance $dist(v_0,\gamma_n(v_0))$ is constant. Let us choose a vertex $v$ such that there exists a subsequence $\sub$ such that the segments $[v_0,\gamma_{\sigma(n)}{v_0}]$ share a common segment $[v_0,v]$ and the edges $e_{\sigma(n)}$ located just after $v$ are all distinct. Then one can extract a subsequence $(e_{\sigma'(n)})_{n\in \mathbb{N}}$ such that spaces corresponding to these edges converges to a point $p$ in $X_v$. Then by convergence criterion, $\gamma_{\sigma'(n)}(X_{v_0})$ converge uniformly to $p$. Let $\xi\in \pt$. Then $v$ is not on the ray $[\gamma_{\sigma'(n)}v_0, \gamma_{\sigma'(n)}\xi]$ for sufficiently large $n$ for if $v$ is there then, for infinitely many $n$, $\gamma^{-1}_{\sigma'(n)}v = w$ for some fixed vertex $w$ on $[v_0,\xi)$, we see that we are in the first case, which is a contradiction. Thus for all $\xi\in \pt$, we see that the unique rays from $v$ to $\gamma_{\sigma'(n)}\xi$ contains the edge $e'_{\sigma(n)}$ for some $n$. Hence by convergence criterion $\gamma_{\sigma'(n)}\xi \rightarrow p$ for all $\xi \in \pt$. Now, let $x$ be in the space except $X_{v_0}$. Suppose $x\in X_{v'}$. Again by the same reasoning, we see that $v\notin [\gamma_{\sigma'(n)}v_0, \gamma_{\sigma'(n)}v']$ for sufficiently large $n$. Then the unique geodesic segment from $v$ to $\gamma_{\sigma'(n)}v'$ contains $e'(n)$ for sufficiently large $n$. Hence by definition of neighborhoods $\gamma_{\sigma'(n)}x \rightarrow p$. Thus, for all compact subsets $K \subset M$, we see that $\gamma_{\sigma'(n)}K \rightarrow p$ uniformly.
	
\end{proof}
Using the previous two lemmas, we get the following:
\begin{cor}
	The group $\gma$ acts on $M$ as a convergence group.
\end{cor}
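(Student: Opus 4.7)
The plan is to deduce the corollary directly from the dichotomy provided by Lemmas \ref{large} and \ref{small}. First I would record the preliminary fact that $\Gamma$ acts on $M$ by homeomorphisms: the left multiplication action on $\Omega = \Gamma \times \bigsqcup_{v \in \VV(\tau)} X_v$ descends to $\omg$ by equivariance of the maps $\Lambda_{e,v}$, it descends further to $M$ because the action on $T$ permutes the subtrees $D(x)$ together with their boundaries, and it is continuous because $\gamma W_n(p) = W_n(\gamma p)$ for each basic neighborhood (the domains, the subtrees $T_{v_i,U_i}$, and the subtrees $T_m(\xi)$ are all transported consistently by $\Gamma$).

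Next I would take an arbitrary sequence $(\gamma_n)_{n\in\NN}$ in $\Gamma$ and fix a base vertex $v_0 \in T$. The proof then splits on the behavior of the translation lengths $d_T(v_0, \gamma_n v_0)$. If this sequence is unbounded, pass to a subsequence along which $d_T(v_0,\gamma_n v_0) \to \infty$; the Large Translation Lemma then supplies a further subsequence $(\gamma_{\sigma(n)})$ and points $p \in M$, $\zeta \in \pt$ such that $\gamma_{\sigma(n)} K \to p$ uniformly on every compact $K \subset M \setminus \{\zeta\}$, which is exactly the convergence group condition with attracting point $p$ and repelling point $\zeta$.

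If instead $d_T(v_0,\gamma_n v_0)$ is bounded, the orbit $\{\gamma_n v_0\}$ lies in a finite set of vertices of $T$, so after extraction we may assume $\gamma_n v_0$ is constant in $T$. The Small Translation Lemma then directly produces a subsequence $(\gamma_{\sigma(n)})$, a vertex $v$, an attracting point $p \in X_v$ and a repelling point $p' \in \Omega'$ with $\gamma_{\sigma(n)} K \to p$ uniformly for every compact $K \subset M \setminus \{p'\}$. (Strictly speaking the lemma is stated for sequences of distinct elements; if $(\gamma_n)$ has an element repeated infinitely often, the stabilizer of $v_0$ is a convergence group on $X_{v_0}$ and one applies the elementary convergence statement within $\Gamma_{v_0}$, or one extracts a subsequence of distinct elements before applying Lemma \ref{small}.)

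The two cases cover every sequence in $\Gamma$, and in each case the extracted subsequence satisfies the uniform convergence on compacta of $M$ minus one point required by the definition of a convergence action. Combined with the continuity verification above, this gives the corollary. I expect the only subtle point to be the bookkeeping in the bounded case, namely reducing to distinct elements (or to a fixed coset of a vertex stabilizer) before invoking Lemma \ref{small}; once that reduction is made, the two lemmas assemble into the statement with no further work.
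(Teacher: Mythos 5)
Your proposal follows exactly the paper's argument: fix a base vertex, split according to whether $d_T(v_0,\gamma_n v_0)$ is unbounded or bounded after extraction, and invoke Lemma \ref{large} or Lemma \ref{small} respectively. The extra care you take (checking that $\Gamma$ acts by homeomorphisms, and reducing to distinct elements before applying the Small Translation Lemma) is sound bookkeeping that the paper leaves implicit.
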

\begin{proof}
	Fix a vertex $v_0$ in $T$. Let $(\gamma_n)_{n\in\mathbb{N}}$ be a sequence in $\Gamma$. Then up to extraction of a subsequence, there are two cases: either the distance from $v_0$ to $\gamma_nv_0$ goes to infinity or the distance from $v_0$ to $\gamma_nv_0$ is bounded. In either case, previous lemmas imply the corollary, which proves Theorem \ref{para}.
\end{proof}
\section{Combination of convergence groups} In this section, we prove Theorem \ref{combination}. First of all, we recall a construction from \cite{manning} that is used in the proof of Theorem \ref{combination}. Let $\Gamma$ be a hyperbolic group and let $(\Gamma,\mathcal{G})$ be a relatively hyperbolic group. In \cite{manning}, Manning constructed a space which is the quotient of Gromov's boundary of $\Gamma$ and showed that $\Gamma$ acts geometrically finitely on the quotient. The quotient is obtained by collapsing all the translates of the limit set of subgroups in $\mathcal{G}$. Using Yaman's characterization (Theorem \ref{thm1}) of relative hyperbolicity, this quotient is Bowditch boundary of $(\Gamma,\mathcal{G})$. There, to prove that the action of $\Gamma$ on the quotient is convergence, we do not require that $\Gamma$ is hyperbolic and $\mathcal{G}$ is a malnormal family of quasi-convex subgroups. 

Suppose $\Gamma$ acts on a compact metrizable space $X$ as a convergence group. Let $\mathcal{G}$ is a dynamically malnormal family of dynamically quasi-convex subgroups. Then form a quotient space $X/\sim$ as in \cite{manning} by collapsing translates of limit sets of subgroups in $\mathcal{G}$. Also, assume that $|\Lambda(H)|\geq 2$, where $H\in \mathcal{G}$. To prove Proposition 2.2 in \cite{manning}, we require that the collection of limit sets of the cosets of the elements in $\mathcal{G}$ form a null sequence which follows from Proposition \ref{prop1}. We immediately have the following lemma:
\begin{lemma}\label{manning}
	\begin{enumerate}
	 	\item $X/\sim$ is a compact metrizable space.
	 	\item The group $\Gamma$ acts on $X/\sim$ as a convergence group.
	 \end{enumerate}
\end{lemma}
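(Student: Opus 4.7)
The plan is to follow Manning's construction in \cite{manning}, translating from the hyperbolic/quasi-convex setting to the convergence/dynamically quasi-convex setting, and to deduce compactness, metrizability, and the convergence property from general principles about null decompositions of compact metric spaces.

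For part (1), I would first describe the decomposition of $X$ explicitly: the equivalence classes are the singletons $\{x\}$ for $x$ lying in no translate $g\Lambda(H)$ with $H\in\mathcal{G}$ and $g\in\Gamma$, together with the sets $g\Lambda(H)$ themselves. Dynamical malnormality of $\mathcal{G}$ forces these nontrivial classes to be pairwise disjoint, and each is closed in $X$ (limit sets are closed by definition). The essential input is the null property: for any $\epsilon>0$ and any compatible metric on $X$, only finitely many of the translates $g\Lambda(H)$ have diameter at least $\epsilon$. Proposition \ref{prop1} gives this for each fixed $H\in\mathcal{G}$, and dynamical malnormality upgrades the statement to hold uniformly across all subgroups and all cosets. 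A classical theorem of Moore then implies that the quotient of a compact metric space by a null closed upper semi-continuous decomposition is compact and metrizable; here upper semi-continuity is immediate from the null property, since every equivalence class is separated from the remainder of $X$ by only finitely many ``large'' translates that can be excluded one by one.

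For part (2), let $\pi\colon X\to X/\sim$ denote the quotient map and let $(\gamma_n)$ be a sequence in $\Gamma$. Using the convergence action $\Gamma\curvearrowright X$, extract a subsequence (still denoted $(\gamma_n)$) and points $\xi,\eta\in X$ such that $\gamma_n K\to\xi$ uniformly for every compact $K\subset X\setminus\{\eta\}$. Set $\xi'=\pi(\xi)$ and $\eta'=\pi(\eta)$. I claim that $\gamma_n K'\to\xi'$ uniformly for every compact $K'\subset (X/\sim)\setminus\{\eta'\}$. Indeed, $\pi^{-1}(K')$ is closed, saturated, and disjoint from $\eta$, hence compact in $X\setminus\{\eta\}$. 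Given any open neighborhood $V$ of $\xi'$ in $X/\sim$, the preimage $\pi^{-1}(V)$ is an open neighborhood of $\xi$ in $X$, so uniform convergence in $X$ yields $\gamma_n \pi^{-1}(K')\subset\pi^{-1}(V)$ for all large $n$, whence $\gamma_n K'\subset V$. This is precisely the uniform convergence required for a convergence action on $X/\sim$.

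The main obstacle is verifying the null-family property used in part (1). Proposition \ref{prop1} handles each subgroup in $\mathcal{G}$ separately, but one must combine the different $H\in\mathcal{G}$ and their cosets into a single null collection. This is where dynamical malnormality is indispensable: it rules out persistent overlap among translates $g\Lambda(H)$ and $g'\Lambda(H')$ belonging to distinct cosets (possibly of distinct subgroups in $\mathcal{G}$), so that the diameters of the translates shrink uniformly across the entire family and Moore's decomposition theorem applies cleanly.
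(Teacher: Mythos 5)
Your proposal is correct and follows essentially the same route as the paper: the paper gives no argument beyond deferring to Manning's construction and observing that the required null-sequence property of the translates $g\Lambda(H)$ follows from Proposition~\ref{prop1}, which is exactly the key step you identify and then flesh out (disjointness from dynamical malnormality, upper semi-continuity and metrizability of the quotient, and the push-forward of the convergence property through the equivariant quotient map). The only small quibble is one of emphasis: the nullity of the family comes from dynamical quasi-convexity of each subgroup (plus finiteness of $\mathcal{G}$), while dynamical malnormality is what makes the nontrivial classes pairwise disjoint so that the decomposition is genuinely the one you describe; this does not affect the validity of your argument.
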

Also, note that each subgroup in $\mathcal{G}$ become a parabolic subgroup for the action of $\Gamma$ on $X/\sim$. Now, we give proof of Theorem \ref{combination}.

\subsection*{Proof of \thmref{combination}}\label{combination proof} Let $\Gamma$ be as in the statement of the theorem. Let $\Gamma_v$ be a vertex group that acts as a convergence group on $X_v$. Take the collection of edges incident to vertex $v$ and take a collection of those edge groups which are not parabolic in the vertex group $\Gamma_v$. Consider the stabilizers of the limit sets of these edge groups in the vertex group $\Gamma_v$. By assumption, they form a dynamical malnormal family of dynamically quasi-convex subgroups. Therefore, by Lemma \ref{manning}, we obtain a quotient of $X_v$, namely $X_{v}/\sim$ on which $\Gamma_v$  acts as a convergence group and all edge groups incident to $v$  become parabolic subgroups of $\Gamma_v$. Hence by following the same process at each vertex group, we are in the situation where we have a graph of convergence groups with edge groups parabolic in adjacent vertex groups. This completes the proof using Theorem \ref{para}.  \qed
\subsection*{Proof of \propref{cyclic}} It is sufficient to consider the amalgam and HNN extension case.

Case(1): Let $\Gamma=\Gamma_1 \ast_{\langle \gamma_1\rangle \simeq \langle \gamma_2\rangle}\Gamma_2$. Suppose $\Gamma_1$ is acting on $X_1$ as a convergence group and $\gamma_1$ is a loxodromic element in $\Gamma_1$. Then, by \cite[Lemma 2.6]{yang},  stabilizer of the limit set of $\langle \gamma_1\rangle$ in $\Gamma_1$ is a dynamically quasi-convex subgroup of $\Gamma_1$. Also, by assumption, the stabilizer of the limit set of $\langle \gamma_1\rangle$ in $\Gamma_1$ is dynamically malnormal. Similarly, the stabilizer of the limit set of $\langle \gamma_2\rangle$ in $\Gamma_2$ is dynamically quasi-convex and dynamically malnormal. Hence, by Theorem \ref{combination}, $\Gamma$ is a convergence group.

Case(2): Let $\Gamma=\Gamma_1\ast_{\langle\gamma_1\rangle \simeq \langle\gamma_2\rangle}$. Suppose $\Gamma_1$ acts as a convergence group $X_1$ and $\gamma_1$ is loxodromic for this action. Then, by Lemma \ref{manning}, $\Gamma_1$ acts as a convergence group on the quotient $X_1/\sim$ of $X_1$, $\gamma_1$ is a parabolic element for the action of $\Gamma_1$ on $X_1/\sim$ . Now, if $\gamma_2$ is parabolic for the action of $\Gamma_1$ on $X_1/\sim$ then we have HNN extension of convergence group with parabolic edge group, and hence by Theorem \ref{para}, $\Gamma$ is a convergence group. If $\gamma_2$ is loxodromic for the action of $\Gamma_1$ on $X_1/\sim$ then again, by Lemma \ref{manning}, we have the quotient of $X_1/\sim$ such that $\Gamma_1$ acts as a convergence group. $\gamma_2$ is a parabolic element for the action of $\Gamma_1$ on the quotient of $X_1/\sim$. Hence, we have the HNN extension of convergence group with parabolic edge group, and, by Theorem \ref{para}, $\Gamma$ is a convergence group.         \qed

{\bf Note:}\label{reason 1} The amalgam case in the above corollary is exactly a corollary of Theorem \ref{combination}. However, the HNN extension is not exactly a corollary of Theorem \ref{combination} as stabilizers of limit sets of $\langle \gamma_1\rangle$, $\langle \gamma_2\rangle$ in $\Gamma_1$ respectively need not form a dynamical malnormal family.

When the vertex groups in Corollary \ref{cyclic} are torsion-free, then, by the following lemma, we do not need to assume dynamical malnormality of edge group in adjacent vertex groups.
\begin{lemma}\label{dyna}
	Let $\Gamma$ be a torsion-free group that acts on $X$ as a convergence group. Let $\gamma\in \Gamma$ be a loxodromic element and let $H=Stab_{\Gamma}(\Lambda(\langle\gamma\rangle))$ Then $H$ is a dynamically quasi-convex and dynamically malnormal subgroup of $\Gamma$.
\end{lemma}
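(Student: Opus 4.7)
Write $\Lambda(\langle\gamma\rangle)=\{x^+,x^-\}$. The plan is in two parts. First I show that $H$ is infinite cyclic, so that dynamical quasi-convexity is immediate from Lemma~2.5 (the paper's general statement that infinite cyclic subgroups of convergence groups are dynamically quasi-convex). Then I prove dynamical malnormality by reducing any hypothetical $g\in\Gamma\setminus H$ with $g\Lambda(H)\cap\Lambda(H)\neq\emptyset$ to the forbidden configuration of two loxodromic elements of $\Gamma$ that share exactly one fixed point.

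Structure of $H$: set $H^+:=\mathrm{Stab}_\Gamma(x^+)\cap\mathrm{Stab}_\Gamma(x^-)$, an index-at-most-two subgroup of $H$ whose non-trivial coset (if any) consists of elements swapping $x^+$ and $x^-$. As a subgroup of $\Gamma$, $H^+$ is a convergence group on $M$ that contains the loxodromic $\gamma$ and fixes $\{x^+,x^-\}$ pointwise; hence $\Lambda(H^+)=\{x^+,x^-\}$ and $H^+$ is elementary of loxodromic type. By Tukia's classification of elementary convergence groups, $H^+$ is virtually infinite cyclic, and torsion-freeness promotes this to $H^+\cong\mathbb{Z}$, say $H^+=\langle h\rangle$ with $\gamma=h^k$. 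To rule out $g\in H\setminus H^+$: such a $g$ preserves $\{x^+,x^-\}$ as a set while swapping its two points, so $ghg^{-1}\in H^+=\langle h\rangle$; since conjugation is a group automorphism of $\langle h\rangle$, $ghg^{-1}\in\{h,h^{-1}\}$, and the fact that conjugation by $g$ sends the attracting fixed point $x^+$ of $h$ to $x^-$ forces $ghg^{-1}=h^{-1}$. Writing $g^2=h^m\in H^+$ and conjugating by $g$ then yields $h^m=(ghg^{-1})^m=h^{-m}$, so $h^{2m}=1$; infinite order of $h$ gives $m=0$, hence $g^2=1$, and torsion-freeness gives $g=1$, contradicting $g\in H\setminus H^+$. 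Therefore $H=\langle h\rangle$ is infinite cyclic, and Lemma~2.5 gives dynamical quasi-convexity.

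Dynamical malnormality: suppose for contradiction that $g\in\Gamma\setminus H$ satisfies $g\{x^+,x^-\}\cap\{x^+,x^-\}\neq\emptyset$. Since $g$ is injective and $g\notin H$, $g\{x^+,x^-\}\neq\{x^+,x^-\}$, so the intersection is a single point and the other image of $g$ lies outside $\{x^+,x^-\}$. A short case analysis on which of $gx^\pm$ lies in $\{x^+,x^-\}$ shows that in every case the conjugate $\beta:=g\gamma g^{-1}$ is a loxodromic element whose fixed-point set $\{gx^+,gx^-\}$ meets $\{x^+,x^-\}$ in exactly one point and differs in the other. Thus $\gamma$ and $\beta$ are two loxodromic elements of the convergence group $\Gamma$ sharing exactly one fixed point, contradicting the standard fact (see~\cite{bowconvg}) that any two loxodromic elements of a convergence group sharing a fixed point must share both. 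This completes the proof.

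The main technical obstacle, if one wants a self-contained treatment of the ``standard fact'', is exactly the ruling out of two loxodromics sharing only one fixed point. The shortest argument arranges orientations so that the common fixed point is attracting for both $\gamma,\beta$, and studies the sequence $\phi_N:=\beta^N\gamma^{-N}$. The $\phi_N$ are pairwise distinct (otherwise some nonzero power of $\gamma$ would equal the same power of $\beta$, forcing $\gamma$ and $\beta$ to have the same fixed-point set), a direct calculation using the loxodromic dynamics gives $\phi_N(z)\to x^+$ for every $z\in M$ and symmetrically for $\phi_N^{-1}$, and then the convergence axiom applied to $(\phi_N)$, combined with each $\phi_N$ being a homeomorphism of a compactum with at least three points, produces the desired contradiction.
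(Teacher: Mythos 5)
Your proof is correct, but it is organized differently from the paper's. For dynamical quasi-convexity the paper simply cites \cite[Lemma 2.6]{yang} (stabilizers of limit sets of loxodromic elements are dynamically quasi-convex), whereas you first prove the stronger structural statement $H\cong\mathbb{Z}$ -- via Tukia's classification of elementary convergence groups plus a short computation killing the potential order-two element that swaps $x^+$ and $x^-$ -- and then invoke Lemma 2.5 of the paper. Your route is more self-contained but spends torsion-freeness on this half, where the paper's citation does not need it. For dynamical malnormality the paper argues on the element $g$ itself: it splits into the case where $g$ fixes one of $x_1,x_2$ and the case where $g$ swaps them, uses torsion-freeness to give $g$ infinite order, rules out $g$ being parabolic via \cite[Proposition 3.2]{bowconvg}, and then applies \cite[Theorem 2G]{tukiametric}. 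You instead conjugate $\gamma$ by $g$ uniformly in all cases and derive the contradiction of two loxodromics sharing exactly one fixed point; this is cleaner, avoids classifying $g$, and in fact shows that malnormality of the setwise stabilizer holds without the torsion-freeness hypothesis. Both arguments ultimately rest on the same key fact (loxodromics sharing a fixed point share both), which the paper attributes to \cite[Theorem 2G]{tukiametric}; your closing sketch of a self-contained proof of that fact via $\phi_N=\beta^N\gamma^{-N}$ is unnecessary given that citation, and as written it would need more care (the claim that $\phi_N(z)\to x^+$ for \emph{every} $z$, and the extraction of the final contradiction from the convergence axiom, both require the locally uniform convergence bookkeeping that Tukia's proof supplies), so I would simply cite the reference there as the paper does.
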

\begin{proof}
	The dynamical quasi-convexity of $H$ follows from Lemma 2.6 of \cite{yang}. Let $\gamma_1\in \Gamma\setminus H$ and assume that $\gamma_1\Lambda(H)\cap \Lambda(H)\neq \emptyset$. Let $\Lambda(H)=\{x_1,x_2\}$ and let $\gamma_1$ fixes, either $x_1$ or $x_2$. Since $\Gamma$ is torsion-free, $\gamma_1$ has infinite order. Thus $\gamma_1$ is either a parabolic or a loxodromic element. By \cite[Proposition 3.2]{bowconvg}, a parabolic point can not be a fixed point of a loxodromic element so $\gamma_1$ can not be parabolic. By \cite[Theorem 2G]{tukiametric}, if $\gamma_1$ is loxodromic, then $\gamma_1$ must fix the other point. But this implies that $\gamma_1$ is in $H$, which is a contradiction. Now, suppose that $\gamma_1$ does not fix any of $x_i$ and $\gamma_1x_1 =x_2$. Consider the element $\gamma_1' = \gamma_1^{-1}\gamma \gamma_1$ which fixes $x_1$. Again $\gamma_1'$ can not be parabolic so it has to be loxodromic, but this implies that $\gamma_1' x_2=x_2$. Thus $\gamma_1x_2 =x_1$ and this implies that $\gamma_1$ is in $H$, which is again a contradiction. 
\end{proof}
Hence for torsion-free groups, we have the following:
\begin{prop}\label{torsionfreecyclic}
	Let $\Gamma$ be the fundamental group of a finite graph of torsion-free countable convergence groups with infinite cyclic edge groups. Then $\Gamma$ is a convergence group.    \qed
\end{prop}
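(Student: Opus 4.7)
The strategy is to reduce to the parabolic edge-group case (\thmref{para}) by iteratively applying Manning's collapse (\lemref{manning}), with \lemref{dyna} supplying the dynamical malnormality hypothesis that was assumed outright in \propref{cyclic}. As in the proofs of \thmref{para} and \propref{cyclic}, it suffices to handle the amalgam and HNN extension cases; a general finite graph of groups is then dealt with by first working on a maximal subtree (amalgams) and then adding back the remaining edges (HNN extensions) one at a time.

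For the amalgam $\Gamma = \Gamma_1 *_{\langle\gamma_1\rangle \simeq \langle\gamma_2\rangle} \Gamma_2$, let $\Gamma_i$ act as a convergence group on $X_i$. If $\gamma_i$ is already parabolic in this action, do nothing. If $\gamma_i$ is loxodromic, \lemref{dyna} (applicable because $\Gamma_i$ is torsion-free) shows that $H_i := Stab_{\Gamma_i}(\Lambda(\langle\gamma_i\rangle))$ is dynamically quasi-convex and dynamically malnormal in $\Gamma_i$. Then \lemref{manning} yields a quotient $X_i/\sim$ on which $\Gamma_i$ still acts as a convergence group, and in which $\gamma_i$ becomes parabolic (its two fixed points having been identified to a single parabolic point fixed by $H_i$). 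After this replacement on each side, we obtain an amalgam of convergence groups with parabolic edge groups, and \thmref{para} finishes the argument.

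For the HNN extension $\Gamma = \Gamma_1 *_{\langle\gamma_1\rangle \simeq \langle\gamma_2\rangle}$, both edge embeddings sit inside the same vertex group $\Gamma_1$, and I may need to iterate the collapse. If $\gamma_1$ is loxodromic on $X_1$, apply \lemref{dyna} together with \lemref{manning} to collapse the $\Gamma_1$-orbit of $\Lambda(\langle\gamma_1\rangle)$, obtaining a convergence action on $X_1/\sim$ in which $\gamma_1$ is parabolic. In this new action, $\gamma_2$ is either already parabolic (possibly because $\Lambda(\langle\gamma_2\rangle)$ coincided with a translate of $\Lambda(\langle\gamma_1\rangle)$ that got collapsed) or still loxodromic. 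In the latter case, apply the same reasoning once more: $\Gamma_1$ is still torsion-free, so \lemref{dyna} again furnishes dynamical malnormality and dynamical quasi-convexity for the stabiliser of $\Lambda(\langle\gamma_2\rangle)$ in the new action, and \lemref{manning} produces a second quotient in which $\gamma_2$ too becomes parabolic. An application of \thmref{para} then concludes the HNN case.

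The point to verify with care is the iterated collapse in the HNN case: one must check that after the first Manning quotient, the hypotheses of \lemref{dyna} and \lemref{manning} are still satisfied. This is straightforward because torsion-freeness is an intrinsic property of $\Gamma_1$ (unchanged by passing to a quotient of the space), and because \lemref{manning} itself guarantees that the action on the first quotient is again a convergence action on a compact metrizable space. With these observations the proof becomes essentially a two-step version of the argument already carried out for \propref{cyclic}.
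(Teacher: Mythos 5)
Your proposal is correct and follows essentially the same route as the paper: the paper derives this proposition by combining Lemma \ref{dyna} (torsion-freeness supplies the dynamical malnormality and dynamical quasi-convexity of the stabilizers of the limit sets of loxodromic edge generators) with the argument already given for Proposition \ref{cyclic}, namely Manning's collapse via Lemma \ref{manning} to make the edge groups parabolic, iterated twice in the HNN case, followed by Theorem \ref{para}. Your additional care about re-verifying the hypotheses after the first collapse in the HNN case is a correct and welcome elaboration of what the paper leaves implicit.
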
 
\begin{rem}
	Although we have answered Question \ref{qn} in the special cases but the general case is still not answered. For the general case, if we try to work with Dahmani's construction, we need to identify more points in the space constructed in \cite{dahmni} but it is unclear which  points are needed to be identified.
\end{rem}
\section{Proof of Theorems \ref{rel para}, \ref{rel cyclic}}\label{6}
So far, we have proved combination theorems for convergence groups. Let $\Gamma$ be as in the Theorem \ref{para}. First of all, if each vertex group $\Gamma_v$ acts geometrically finitely on compact metrizable space $X_v$, then the group $\Gamma$ acts geometrically finitely on the space $M$ constructed in Section \ref{3} . Using this, we give the proof of Theorems \ref{rel para},\ref{rel cyclic}. To prove that $\Gamma$ acts geometrically finitely , we demonstrate that every point of $M$ is either a conical limit point or a bounded parabolic point. So, we start proving the following lemmata:
\begin{lemma}\label{conical}
	Every point in $\pt\subset M$ is a conical limit point for $\gma$ in $M$. 
\end{lemma}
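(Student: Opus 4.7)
The goal is, for any $\xi \in (\partial T)'$, to construct a sequence $(\gamma_k) \subset \Gamma$ and two distinct points $a, p \in M$ with $\gamma_k \xi \to a$ and $\gamma_k \xi' \to p$ for every $\xi' \in M \setminus \{\xi\}$. Fix a base vertex $v_0 \in T$ and write $[v_0, \xi) = v_0 v_1 v_2 \cdots$. Because the graph of groups is finite, $T/\Gamma$ has only finitely many directed-edge orbits, so pigeonhole produces an infinite subsequence $(n_k)$ and $\gamma_k \in \Gamma$ with $\gamma_k v_{n_k} = v_0$ and $\gamma_k v_{n_k + 1} = v_1$; equivalently, $\gamma_k$ sends the oriented edge $(v_{n_k}, v_{n_k + 1})$ to $e_0 := (v_0, v_1)$. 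Then $d(v_0, \gamma_k v_0) = n_k \to \infty$, and \lemref{large} supplies, after a further subsequence, points $p \in M$ and $\zeta \in (\partial T)'$ with $\gamma_k K \to p$ uniformly on every compact $K \subset M \setminus \{\zeta\}$.

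I would next verify that $\zeta = \xi$, that is, $\gamma_k \xi' \to p$ for every $\xi' \neq \xi$. By \lemref{large} only the case $\xi' \in (\partial T)'$ remains. Let $v_*$ be the branch vertex of the tripod $\{v_0, \xi, \xi'\}$ in $T$; for large $k$, $v_* \in [v_0, v_{n_k}]$ and the ray $[v_0, \xi')$ branches off $[v_0, \xi)$ at $v_*$. Applying $\gamma_k$ and using $\gamma_k v_{n_k} = v_0$, one sees that $\gamma_k v_*$ lies on $[v_0, \gamma_k v_0]$ at distance $n_k - d(v_0, v_*)$ from $v_0$, and $[v_0, \gamma_k \xi') = [v_0, \gamma_k v_*] \cup [\gamma_k v_*, \gamma_k \xi')$. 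Since $n_k - d(v_0, v_*) \to \infty$, the convergence criterion of \secref{3} forces $\gamma_k \xi' \to p$, so $\xi$ is the only candidate exceptional point.

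It remains to show $\gamma_k \xi \to a$ with $a \neq p$. The ray $[v_0, \gamma_k \xi) = v_0, v_1, \gamma_k v_{n_k + 2}, \ldots$ always exits $v_0$ through the fixed edge $e_0$, whereas $[v_0, \gamma_k v_0]$ exits $v_0$ through $(v_0, \gamma_k v_{n_k - 1})$, a different edge because $v_{n_k - 1} \neq v_{n_k + 1}$. By sequential compactness of $M$, a subsequence of $(\gamma_k \xi)$ converges to some $a \in M$. Heuristically, $a$ must lie on the $e_0$-side of $v_0$ in $T$ while $p$ must lie on the opposite side, which would give $a \neq p$ and confirm that $\xi$ is a conical limit point.

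The principal obstacle lies in the edge-parabolic identifications of \secref{3}: a priori, $a$ and $p$ could coincide at a single parabolic point of $X_{v_0}$ whose domain in $M$ straddles both sides of $v_0$. I would resolve this by further refining $(\gamma_k)$ via the finiteness of $\Gamma_{v_0}$-orbits on edges at $v_0$, together with the fact that each maximal parabolic subgroup of $\Gamma_{v_0}$ fixes a single point of $X_{v_0}$: choosing the ``backward'' direction $\gamma_k v_{n_k - 1}$ so that its associated edge-parabolic point in $X_{v_0}$ differs from $X_{e_0}$ separates $a$ from $p$ in $M$ via the neighborhood description of \secref{3}, and completes the proof.
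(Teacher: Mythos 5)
Your overall strategy is the same as the paper's: push the base vertex far along $[v_0,\xi)$, invoke \lemref{large} to obtain an attracting point $p$ and a single possible exceptional point $\zeta\in\pt$, and then show that $\gamma_k\xi$ stays away from $p$. The construction of $(\gamma_k)$ by pigeonhole and the branch-vertex argument showing $\gamma_k\xi'\to p$ for all $\xi'\in\pt\setminus\{\xi\}$ are sound (one small point: the directed edge $(v_0,v_1)$ need not itself lie in the recurring orbit, so you should rebase $e_0$ at the first edge of $[v_0,\xi)$ whose orbit recurs). The genuine gap is exactly where you write ``heuristically'': the separation $a\neq p$ is never established, and your proposed repair does not close it. Requiring the backward edge-point $X_{(v_0,\gamma_k v_{n_k-1})}$ to \emph{differ} from $X_{e_0}$ for each $k$ does not prevent this sequence of points from \emph{converging} in $X_{v_0}$ to $X_{e_0}$, or to another parabolic point whose domain contains $v_0$. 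In that case $v_0\in D(p)$, every basic neighborhood $W_m(p)$ meets both components of $T$ at $v_0$ that you are trying to use as a separator, and excluding $\gamma_k\xi\to p$ would additionally require controlling the edges through which $[v_1,\gamma_k\xi)$ leaves $v_1$ (and possibly deeper vertices of $D(p)$), which you have not done. So the ``principal obstacle'' you correctly identify remains open in your write-up.

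The paper closes precisely this step by a different normalization. It chooses $\gamma_n$ with $\gamma_n v_0\in[v_0,\eta)$ and works with $\gamma_n^{-1}$; the residual freedom (right multiplication of $\gamma_n$ by elements of $\Gamma_{v_0}$, since the elements sending $v_0$ to a fixed vertex of the ray form a coset of $\Gamma_{v_0}$) is used to arrange $p\notin X_{v_0}$, i.e.\ $v_0\notin D(p)$. Once that holds, let $u$ be the vertex of $D(p)$ nearest $v_0$; a sufficiently small basic neighborhood of $p$ excludes the edge-point of the first edge of $[u,v_0]$, hence consists only of points whose domains lie in the component of $T$ on the far side of that edge, while the ray $[\gamma_n^{-1}v_0,\gamma_n^{-1}\eta)$ passes through $v_0$ and therefore exits $u$ through the forbidden edge. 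This gives $\gamma_n^{-1}\eta\notin W$ for all large $n$, hence $a\neq p$. To salvage your version you should prove an analogous normalization ($v_0\notin D(p)$, or at least that $D(p)$ misses the $e_0$-side of $v_0$), rather than the pointwise condition $X_{(v_0,\gamma_k v_{n_k-1})}\neq X_{e_0}$.
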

\begin{proof}
	Let $\eta\in \pt \subset M$ and $v_0$ be a vertex of $T$. Then there exists a sequence $(\gamma_n)_{n\in\mathbb{N}}$ in $\gma$ such that $\gamma_nv_{0}$ lies on the unique geodesic ray $[v_0,\eta)$ for all $n$. Then by lemma \ref{large}, there exists a subsequence denoted by $\gamma_n$ and a point $p\in M$ such that for all $q$ in $M$ except possibly a point in $\pt$, we have $\gamma^{-1}_nq$ converges to $p$.
	% We apply lemma\ref{large} for the sequence $(\gamma^{-1})$ %  
	Now after multiplying each $\gamma_n$ on the the right by an element of $\Gamma_{v_0}$, we can assume that $p$ does not belong to $X_{v_0}$. To prove that $\eta$ is a conical limit point of $\Gamma$ in $M$, it is sufficient to prove that $\gamma_n\eta$ does not converges to $p$. Observe that the ray $[\gamma_n^{-1}v_{0},\gamma_n^{-1}\eta)$ always have $v_0$ on this ray for all $n$. If the sequence $\gamma_n^{-1}\eta$ converges to $p$ then as the sequence $\gamma_nx$ also converges to $p$ for any $x\in X_{v_0}$, we see that $p$ belongs to $\Gamma_{v_0}$, which is a contradiction to our choice of $p$.
\end{proof}
	
Now, we prove that each conical limit point for the action of the vertex group $\Gamma_v$ on $X_v$ is conical for the action of $\Gamma$ on $M$.

\begin{lemma}\label{conicalvertex}
	Every point in $\Omega'$ which is the image of a conical limit point in the vertex stabilizer's boundary is  a conical limit point for $\gma$ in $M$.
\end{lemma}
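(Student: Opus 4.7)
I plan to transport the conical dynamics from $X_v$ up to $M$ by running the same sequence that witnesses conicality of $\tilde x$ downstairs, and invoking the Small Translation lemma (\lemref{small}) to describe its global action on $M$.

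Let $v\in\VV(T)$ and let $\tilde x\in X_v$ be a conical limit point for $\Gamma_v\curvearrowright X_v$, with image $x=\pi(\tilde x)\in\Omega'$. By definition of conicality, there is a sequence $(h_n)\subset\Gamma_v$ and two distinct points $\zeta,\eta\in X_v$ with $h_n\tilde x\to\zeta$ and $h_n\tilde y\to\eta$ for every $\tilde y\in X_v\setminus\{\tilde x\}$. A useful preliminary observation is that $\tilde x$ cannot lie in any edge subspace $X_e\subset X_v$: the edge groups are parabolic in $\Gamma_v$, so each $X_e$ is a parabolic fixed point in $X_v$, and a parabolic point is never conical. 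Consequently $D(x)=\{v\}$, and since $\pi|_{X_v}$ is injective the points $\zeta,\eta$ descend to distinct points of $M$ which I still denote $\zeta,\eta$.

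Since each $h_n$ fixes $v$ in $T$, the orbit $(h_n v)$ is constant and in particular bounded. I apply \lemref{small} to $(h_n)$: after extraction, there exist a vertex $w\in\VV(T)$, a point $p\in X_w$, and a point $p'\in\Omega'$ such that $h_n K\to p$ uniformly for every compact $K\subset M\setminus\{p'\}$. I then identify these data. Because $\tilde x$ is conical, the action of $\Gamma_v$ on $X_v$ is non-elementary, so the limit set of $\Gamma_v$ is perfect and hence $X_v\setminus\{\tilde x,p'\}$ is nonempty. Picking $\tilde y$ in this set and using injectivity of $\pi|_{X_v}$, $\tilde y$ belongs to $M\setminus\{p'\}$; applying the uniform limit at $\tilde y$ then yields $p=\eta$. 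Next, $p'\neq x$ would force $h_n x\to p=\eta$, contradicting $h_n x\to\zeta\neq\eta$; so $p'=x$.

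Assembling everything: $h_n x\to\zeta$ while $h_n y\to\eta$ for every $y\in M\setminus\{x\}$ (apply the uniform limit to the singleton $\{y\}$), with $\zeta\neq\eta$. This is precisely the definition of $x$ being a conical limit point for $\Gamma\curvearrowright M$. The step requiring the most care is the identification $p'=x$: a priori the repelling point from \lemref{small} could sit outside the image of $X_v$, for instance in the tree direction; ruling this out combines the injectivity of $\pi|_{X_v}$ with the fact that $\tilde x$ avoids every edge space, so the local dynamics on $X_v$ pin down the global ones on $M$.
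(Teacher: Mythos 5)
Your proof is correct, but it takes a genuinely different route from the paper's. The paper's argument is very short: it pushes the witnessing sequence $(\gamma_n)\subset\Gamma_v$ through the restriction $\pi|_{X_v}$, which is continuous, injective and equivariant, to conclude that $\pi(x)$ is a conical limit point for $\Gamma_v$ acting on $\pi(X_v)\subset M$, and then invokes the general fact recorded in the Remark of Section~\ref{pre} (a conical limit point for a subgroup acting on its limit set inside $M$ is conical for that subgroup, hence for $\gma$, acting on all of $M$). You instead re-derive the global dynamics on $M$ directly from \lemref{small}, and then pin down the attracting and repelling points: your identification $p=\eta$ via an auxiliary point of $X_v$ and $p'=x$ by contradiction with $h_nx\to\zeta\neq\eta$ is sound, as is the preliminary observation that a conical point of $X_v$ avoids the (parabolic) edge spaces, so $D(x)=\{v\}$ --- though the latter is not strictly needed, since injectivity of $\pi|_{X_v}$ already separates $\zeta$ from $\eta$ in $M$. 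Your approach buys self-containedness (no appeal to the subgroup/limit-set principle, and an explicit description of where the exceptional point of the global action sits), at the cost of invoking the Small Translation lemma, whose Case~(1) essentially contains the computation you redo; two small points you should make explicit are that the witnessing sequence may be assumed to consist of distinct elements (otherwise a constant subsequence would contradict $\zeta\neq\eta$ unless $X_v$ is degenerate), as \lemref{small} requires, and that the convergence $h_n\tilde y\to\eta$ transports to $M$ precisely because $\pi|_{X_v}$ is continuous.
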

\begin{proof}
	Let $x\in X_v$ be a conical limit point for $\gma_v$ in $X_v$. There exists a sequence $\seq$ and two distinct points $y$ and $z$ in $X_v$ such that $\gamma_n(x) \rightarrow y$ and $\gamma_n(x')\rightarrow z$ for all $x' \neq x$. Now, we show that $\pi(x)$ is a conical limit point for $\gma$ in $M$. Since the restriction of $\pi$ to $X_v$ is continuous from $X_v$ to $M$. Therefore $\pi(\gamma_nx) = \gamma_n\pi(x) \rightarrow \pi(y)$ and $\pi(\gamma_nx') = \gamma_n\pi(x') \rightarrow \pi(z)$. Since restriction of $\pi$ is injective, so $\pi(y)$ and $\pi(z)$ are distinct. Hence $\pi(x)$ is a conical limit point for $\gma_v$ in $M$, hence for $\gma$ in $M$. 
\end{proof}
The following lemma proves that the image of each bounded parabolic point in vertex space is bounded parabolic for $\Gamma$ in $M$.
\begin{lemma}\label{bdd prbolc}
	Every point in $\Omega'$ which is image by $\pi$ of a bounded parabolic point in some vertex stabilizer's boundary is a bounded parabolic point for $\gma$ in $M$. 
\end{lemma}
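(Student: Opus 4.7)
Let $x\in X_v$ be a bounded parabolic point for the $\gma_v$-action on $X_v$, and set $P:=\mathrm{Stab}_{\gma_v}(x)$. The plan is to identify the full stabilizer $\Pi:=\mathrm{Stab}_{\gma}(\pi(x))$, verify that $\Pi$ is a parabolic subgroup for the $\gma$-action on $M$, and then exhibit a compact $K\subset M\setminus\{\pi(x)\}$ with $\Pi\cdot K = M\setminus\{\pi(x)\}$.

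For each vertex $u\in D(\pi(x))$, let $x_u\in X_u$ be the representative of the class $\pi(x)$ and set $P_u:=\mathrm{Stab}_{\gma_u}(x_u)$; by geometric finiteness of $\gma_u$ on $X_u$ each $x_u$ is itself a bounded parabolic point. The first task is to describe $\Pi$ via its action on the subtree $D(\pi(x))\subset T$: it contains each $P_u$ and each edge group of $D(\pi(x))$, and by Bass-Serre theory is the fundamental group of the graph of groups $\Pi\backslash D(\pi(x))$ with vertex groups $P_u$ and edge groups $\gma_e$. I expect $\Pi\backslash D(\pi(x))$ to be finite via the finiteness of $\gma\backslash T$ together with a careful accounting of how $\gma_u$-orbits of edge-parabolic points interact with $\Pi$-orbits; this is the step I expect to require the most care. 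Granting finiteness, the absence of loxodromics in $\Pi$ is immediate: any $g\in \Pi$ either fixes some $u\in D(\pi(x))$ and so lies in the parabolic $P_u$, or else acts hyperbolically on $D(\pi(x))$, in which case both endpoints of its axis lie in $\partial D(\pi(x))\subset \partial T$ and are identified with $\pi(x)$ in $M$, so $g$ has only one fixed point in $M$.

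For cocompactness I would pick $\Pi$-orbit representatives $u_1,\dots,u_N$ of the vertices of $D(\pi(x))$. For each $j$, choose a compactum $K_{u_j}\subset X_{u_j}\setminus\{x_{u_j}\}$ with $P_{u_j}K_{u_j}=X_{u_j}\setminus\{x_{u_j}\}$, and pick $P_{u_j}$-orbit representatives $e_{j,1},\dots,e_{j,k_j}$ among the edges incident to $u_j$ whose edge-parabolic point differs from $x_{u_j}$; finiteness of these orbits follows from cocompactness of $P_{u_j}$ on $X_{u_j}\setminus\{x_{u_j}\}$ together with the discreteness and finite-orbit structure of parabolic points of a relatively hyperbolic group. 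For each chosen edge $e_{j,i}$ let $T_{e_{j,i}}$ be the component of $T\setminus\{e_{j,i}\}$ disjoint from $u_j$ and set $M^+_{e_{j,i}}:=\{p\in M : D(p)\subset T_{e_{j,i}}\}\cup\{\pi(x_{e_{j,i}})\}$; Lemma~\ref{avoid} together with the convergence criterion from Section~\ref{3} shows $M^+_{e_{j,i}}$ is closed in $M$, hence compact. The candidate fundamental compactum is $K:=\bigcup_j \bigl(K_{u_j}\cup \bigcup_i M^+_{e_{j,i}}\bigr)$, a finite union of compacta sitting inside $M\setminus\{\pi(x)\}$.

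Finally I would check $\Pi\cdot K=M\setminus\{\pi(x)\}$ by a case split on $p\in M\setminus\{\pi(x)\}$ according to where $D(p)$ lies relative to $D(\pi(x))$. If $D(p)$ meets $D(\pi(x))$ at some vertex $u$, translate $u$ to one of the $u_j$ via $\Pi$ and then use $P_{u_j}$ to move the resulting point into $K_{u_j}$. Otherwise the unique geodesic in $T$ from $D(p)$ to $D(\pi(x))$ enters $D(\pi(x))$ through a unique exit edge $e$ at some $u\in D(\pi(x))$; translate $u\mapsto u_j$ via $\Pi$ and then $e\mapsto e_{j,i}$ via $P_{u_j}$, landing $p$ in $M^+_{e_{j,i}}$. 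Points of $\pt$ corresponding to rays that remain in $D(\pi(x))$ limit to $\pi(x)$ and so do not arise. The main obstacle throughout is justifying the two finiteness statements (finite $\Pi\backslash D(\pi(x))$ and finite $P_{u_j}$-orbits on exit edges); once they are in hand the rest is combinatorial bookkeeping on $T$ combined with the topology of $M$ constructed in Section~\ref{3}.
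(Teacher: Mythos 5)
Your overall architecture (identify $\mathrm{Stab}_\Gamma(\pi(x))$ as the fundamental group of a graph of parabolic groups over $D(\pi(x))$, check it has no loxodromics, then build a fundamental compactum out of vertex compacta plus ``shadows'' of subtrees hanging off exit edges) is the same as the paper's, and the parts about $\Pi\backslash D(\pi(x))$ being finite and about loxodromics are fine (the paper works under its standing amalgam/HNN assumption, where the quotient of $D(p)$ is a single edge or vertex and the stabilizer is explicitly $P_1\ast_P P_2$, resp.\ an HNN extension of a maximal parabolic). But there is one genuine gap: the claim that there are only finitely many $P_{u_j}$-orbits of exit edges at $u_j$. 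This is false in general. The exit edges at $u_j$ not carrying $x_{u_j}$ are indexed by cosets $g\Gamma_e$ in $\Gamma_{u_j}$, so the number of $P_{u_j}$-orbits is the number of double cosets in $P_{u_j}\backslash \Gamma_{u_j}/\Gamma_e$, which is typically infinite: take $\Gamma_{u_j}=A\ast B$ with $A\cong B\cong \ZZ\oplus\ZZ$, hyperbolic relative to $\{A,B\}$, edge group $B$, and $x_{u_j}$ the parabolic point fixed by $A$; then $A\backslash (A\ast B)/B$ is in bijection with reduced words with no leading $A$-syllable and no trailing $B$-syllable, an infinite set. The heuristic you invoke (``discreteness'' of parabolic points) is exactly what fails: in a non-elementary geometrically finite action the orbit of a parabolic point is dense in the limit set, so infinitely many exit-edge parabolic points lie in $K_{u_j}$ and they need not be identified under $P_{u_j}$. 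Consequently your candidate $K$ is an infinite union of the sets $M^+_{e_{j,i}}$ and its compactness does not follow from ``finite union of compacta.''

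The fix --- and this is what the paper does --- is to drop the orbit-representative bookkeeping entirely: let $\mathcal{E}_j$ be the set of \emph{all} exit edges at $u_j$ whose edge-parabolic point lies in $K_{u_j}$ (infinite in general); cocompactness of $P_{u_j}$ on $X_{u_j}\setminus\{x_{u_j}\}$ guarantees every exit edge is a $P_{u_j}$-translate of one in $\mathcal{E}_j$, and compactness of $K_{u_j}\cup\bigcup_{e\in\mathcal{E}_j}M^+_{e}$ is proved not by finiteness but by the observation that the shadows of distinct edges in $\mathcal{E}_j$ form a null family whose only accumulation points lie in $K_{u_j}$ (this is where dynamical quasi-convexity of the edge groups, trivially satisfied here since $X_e$ is a point, enters via Proposition \ref{prop1}). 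With that substitution your case analysis at the end goes through essentially verbatim and recovers the paper's proof.
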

\begin{proof}{\bf (Amalgam Case)}
	We  prove the lemma  in two cases:
	
	Case(1) Let $p$ be a bounded parabolic point for a vertex group $\Gamma_v$ in $X_v$, which is not in any edge space attached to $X_v$. We denote $\pi(p)$ by $p$. Let $D(p) = \{v\}$. Let $P$ be the stabilizer of $p$ in $\gma$. Since $P$ fixes the vertex $v$, $P \leq \gma_v$. In fact, $P$ is the stabilizer of $p$ in $\gma_v$. As $p$ is bounded parabolic point for $\gma_v$ in $X_v$, $P$ acts co-compactly on $X_v\setminus\{p\}$. Let $K$ be a compact subset of $X_v\setminus\{p\}$ such that $PK = X_v\setminus\{p\}$. Consider $\EE$ the set of edges whose boundaries intersect $K$. Let $e$ be the edge with one vertex $v$, then there exists $h\in P$ such that $X_e \cap hK \ne \emptyset$. Therefore the set of edges $\cup_{h\in P} h\EE $ contains every edge with one and only one vertex $v$. Let $\VV$ be the set of vertices $w$ of the tree $T$ such that the first edge of $[v,w]$ is in $\EE$. Let $K'$ be the subset of $M$ consisting of points whose domains are in $\overline{\VV}$. Define $K'' = K\cup K'$. As the sequence of points in $K'$ has limit in $K$, $K''$ is a compact subset of $M$. Now, it is clear that $PK'' = M\setminus \{p\}$.
	
	Case(2) Suppose $p$ is an element of edge boundary. In this case, $D(p)$ is infinite. Suppose that $v_1$ and $v_2$ are vertices of an edge $e$. Let $X_e = \{p\}$ and $P_1,P_2$ be maximal parabolic subgroups in vertex groups $\Gamma_{v_1},\Gamma_{v_2}$ respectively, and $P$ is the parabolic edge group. Then $D(p)$ is nothing but the Bass-Serre tree of the amalgam $Q=P_1\ast_P P_2$, which is the stabilizer of $p$ in $\Gamma$. Under the action of $P_1\ast_P P_2$ the quotient of $D(p)$ is the edge $e$. Since $P_1,P_2$ acts co-compactly on $X_{v_1}\setminus\{p\},X_{v_2}\setminus\{p\}$ respectively, there exists a compact subset $K_i$ of $X_{v_i}\setminus\{p\}$  such that $P_iK_{i} = X_{v_i}$ for $i=1,2$. Consider $\EE_i$ the set of edges starting at $v_i$ whose boundary intersects $K_i$ but does not contain $p$. Let $e$ be an edge with only one vertex in $D(p)$ and $v_i$ be this vertex. Then there exists $h\in P_i$ such that $X_e\cap hK_i \ne \emptyset$ for $i=1,2$. Therefore the set of edges $\cup_{i=1,2} Q\EE_i$ contains every edge with one and only one vertex in $D(p)$. Let $\VV_i$ be the set of vertices $w$ such that first edge of $[v_i,w]$ is in $\EE_i$. Let $K_i'$ be the subset of $M$ consisting of the points whose domain is in $\VV_i$ for $i=1,2$. Since a sequence of points in the spaces corresponding to the stabilizers of distinct edges in $\EE_i$ have only accumulation points in $K_i$, the set $K_i''= K_i\cup K_i'$ is a compact for $i=1,2$. Hence $K=\cup_{i=1,2}K_i''$ is a compact set of $M$ not containing $p$ and $QK =M\setminus\{p\}$. Therefore $p$ is bounded parabolic point for $\Gamma$ in $M$.
	
 {\bf (HNN extension case)} Case(1) Let $\Gamma_v$ be vertex group in HNN extension and $P$ is a parabolic subgroup sitting inside a maximal parabolic subgroup $P_1$  and isomorphic to a subgroup $P'$ of $P_1$.  In this case, the proof remains the same as in the amalgam case except that the maximal parabolic subgroup corresponding to edge boundary point is $P_1\ast_{P\simeq P'}$, and maximal parabolic subgroups corresponding to parabolic points which are not in any edge spaces are maximal parabolic for $\Gamma$ in $M$.

Case(2) Let $\Gamma_v$ be same as in case(1) and suppose $P$ is sitting inside $P_1$ and is isomorphic to a subgroup $P'$ of maximal parabolic subgroup $P_2$, which is not conjugate to $P_1$ in $\Gamma_v$. Then, in this case, we can write $\Gamma_v\ast_{P\simeq P'} = (\Gamma_v \ast_P P')\ast_{P'}$, and we apply the amalgam and case(1) of HNN extension respectively to get the result.
\end{proof}
From the above lemmata, it is clear that if each $\Gamma_v$ acts on $X_v$ geometrically finitely then $\Gamma$ acts on $M$ geometrically finitely. Now, we are in the position of proving the following proposition: 
\begin{prop}
	Let $\Gamma$ be a finitely generated group that splits as a finite graph of convergence groups with finitely generated parabolic edge groups. Then $\Gamma$ acts geometrically finitely on $M$ (constructed in Section \ref{3}) if and only if each vertex group $\Gamma_v$ acts geometrically finitely on compact metrizable space $X_v$.
\end{prop}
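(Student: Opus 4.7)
The plan is to prove the two implications separately. The forward direction is essentially immediate from the preceding three lemmas: every point of $M$ lies in $(\partial T)' \sqcup \Omega'$, \lemref{conical} handles points in $(\partial T)'$, and for any $\pi(x) \in \Omega'$ with $x \in X_v$, geometric finiteness of $\gma_v$ on $X_v$ places $x$ as conical or bounded parabolic in $X_v$, and \lemref{conicalvertex} or \lemref{bdd prbolc} then pushes that property to the $\gma$-action on $M$.

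For the converse, fix $v$ and $x \in X_v$; by geometric finiteness of $\gma$ on $M$, $\pi(x)$ is either conical or bounded parabolic. Suppose first that $\pi(x)$ is conical, witnessed by a sequence $(\gamma_n) \subset \gma$ and distinct $y,z$ with $\gamma_n \pi(x) \to y$ and $\gamma_n p \to z$ for $p \neq \pi(x)$. I would first argue that $d(v,\gamma_n v)$ stays bounded: otherwise \lemref{large}, after subsequencing, gives $p_0 \in M$ and $\zeta \in (\partial T)'$ with $\gamma_{\sigma(n)}$ converging uniformly to $p_0$ on compacta of $M \setminus \{\zeta\}$, and since $\pi(x) \in \Omega'$ is disjoint from $(\partial T)'$, both $\pi(x)$ and any second point $p' \in \Omega'$ converge to $p_0$, forcing $y = z$. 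After extracting so that $\gamma_n v = w$ is constant, write $\gamma_n = \gamma_0 h_n$ with $h_n \in \gma_v$; then $(h_n)$ witnesses conicality of $x$ in $X_v$, where I use that the continuous injection $\pi|_{X_v}$ from the compact $X_v$ into the Hausdorff $M$ is an embedding, so convergence in $M$ of points lying in $X_v$ is convergence in $X_v$.

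Now suppose $\pi(x)$ is bounded parabolic, with stabilizer $Q$ in $\gma$ and compact $K \subset M \setminus \{\pi(x)\}$ satisfying $QK = M \setminus \{\pi(x)\}$. When $D(\pi(x)) = \{v\}$, a direct domain check gives $Q = \mathrm{Stab}_{\gma_v}(x) =: P_v$, and $K \cap X_v$ serves as a compact fundamental domain for $P_v$ on $X_v \setminus \{x\}$. Parabolicity of $P_v$ in the $\gma_v$-action on $X_v$ follows from absence of loxodromic elements in $Q$: a loxodromic $\gamma \in P_v$ on $X_v$ has two fixed points in $X_v$ that are not edge parabolic (loxodromic fixed points are not parabolic by \cite{bowconvg}), and hence inject as two distinct fixed points in $M$, making $\gamma$ loxodromic in $\gma$ — contradicting $\gamma \in Q$ parabolic.

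The main obstacle is the remaining sub-case, where $x$ is an edge parabolic point in $X_v$, so $D(\pi(x))$ is an infinite subtree and $Q$ strictly contains $P_v$ (in the amalgam case $Q = P_1 *_P P_2$ with $P_1 = P_v$). The key technical input, read off directly from the definition of $W_m(\pi(x))$, is that once a finite $J \subset D(\pi(x))$ and open $U_j \ni \pi(x)$ in $X_{v_j}$ for $j \in J$ are fixed, every $X_{v'}$ with $v' \in D(\pi(x)) \setminus J$ lies inside $W_m(\pi(x))$ via the component $C$. Consequently, if $k_n \in X_{v'_n}$ for pairwise distinct vertices $v'_n \in D(\pi(x))$, then $k_n \to \pi(x)$ in $M$; so any compact $K \subset M \setminus \{\pi(x)\}$ meets only finitely many $X_{v'}$ with $v' \in D(\pi(x))$. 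Choosing $Q$-coset representatives $q_1, \dots, q_N$ for those of these vertices that lie in $Qv$, the compact set $K' := \bigcup_i (q_i^{-1}K \cap X_v) \subset X_v \setminus \{x\}$ satisfies $P_v K' = X_v \setminus \{x\}$, because for $y \in X_v \setminus \{x\}$ with $qy \in K$ we have $qv \in Qv$, so $q = q_i p$ for some $p \in P_v$, giving $y = p^{-1}(q_i^{-1}qy) \in P_v K'$. Parabolicity of $P_v$ is checked as in the previous sub-case, and the HNN extension case is handled analogously using Cases~(1) and~(2) of the HNN part of \lemref{bdd prbolc}.
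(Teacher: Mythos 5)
Your proposal is correct, and the forward direction coincides with the paper's (both just invoke Lemmas \ref{conical}, \ref{conicalvertex} and \ref{bdd prbolc}). The converse, however, is argued quite differently. The paper disposes of it in two lines by citation: the edge groups are parabolic for $\Gamma\curvearrowright M$, hence relatively quasi-convex, so by \cite[Proposition 5.2]{haulmark2021} each vertex group $\Gamma_v$ is relatively quasi-convex in $\Gamma$, and therefore acts geometrically finitely on its limit set $X_v$; this is why the statement carries the hypotheses that $\Gamma$ and the edge groups are finitely generated. You instead give a direct dynamical argument inside the explicit model $M$: you rule out unbounded translation for a conical sequence via Lemma \ref{large} and then transport conicality through the embedding $\pi|_{X_v}$, and for a bounded parabolic point you cut a $\Gamma$-cocompact core $K$ down to a $P_v$-cocompact core in $X_v\setminus\{x\}$, using the observation (valid by the form of the neighborhoods $W_m$) that a compact set missing an edge parabolic point meets only finitely many vertex spaces in its domain. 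Your route is longer but self-contained: it does not pass through the notion of relative quasi-convexity (which presupposes that $\Gamma$ is relatively hyperbolic in Bowditch's sense), it does not need the finite generation hypotheses, and it identifies explicitly which points of $X_v$ are conical and which are bounded parabolic. The paper's route is shorter but imports an external structural result. All the individual steps of your argument check out (in particular the covering identity $P_vK'=X_v\setminus\{x\}$ via the coset bookkeeping $q=q_ip$, and the loxodromic-exclusion argument for parabolicity of $P_v$), so I see no gap.
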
 
\begin{proof}
	Note that, by \cite[Lemma 2.5]{bigdely}, each vertex group $\Gamma_v$ is finitely generated. Suppose each vertex group $\Gamma_v$ acts geometrically finitely on $X_v$. Then, by Lemma \ref{conical}, \ref{conicalvertex}, \ref{bdd prbolc}, $\Gamma$ acts geometrically finitely on $M$. Conversely, suppose that $\Gamma$ acts geometrically finitely on $M$. Since each edge group is parabolic for the action of $\Gamma$ on $M$, each edge group is a relatively quasi-convex subgroup of $\Gamma$. By \cite[Proposition 5.2]{haulmark2021}, each vertex group $\Gamma_v$ is a relatively quasi-convex subgroup of $\Gamma$. In particular, each $\Gamma_v$ acts geometrically finitely on $X_v$ as $X_v$ is the limit set of $\Gamma_v$ for $\Gamma$ acting on $M$.
\end{proof}

\subsection*{Proof of \thmref{rel para}} Let $\Gamma$ be either amalgam or HNN extension of relatively hyperbolic groups with parabolic edge groups. Since each vertex group $\Gamma_v$ is relatively hyperbolic, $\Gamma$ acts geometrically finitely on its Bowditch boundary. To prove $\Gamma$ is relatively hyperbolic, we use Yaman's characterization Theorem \ref{thm1}. For constructing a space on which $\Gamma$ acts geometrically finitely, we follow the same construction as in Section \ref{3} by taking compactum $X_v$, $X_e$ for $\Gamma_v$, $\Gamma_e$ as Bowditch boundaries of these groups, respectively. Let $\Gamma$ be as in the theorem. From the above discussion, we have a space $M$ which is compact metrizable as proved in Section \ref{3}. Now, the proof of Theorem \ref{para} gives that $\Gamma$ acts on $M$ as a convergence group. Since each vertex group acts geometrically on its Bowditch boundary, from the above lemmata, the groups $\Gamma$ acts geometrically finitely on $M$. Hence by Yaman's characterization \ref{thm1}, $\Gamma$ is relatively hyperbolic, and $M$ is Bowditch boundary for $\Gamma$. \qed

Let $\Gamma$ be as in the proof of the above theorem. Consider the collection $\mathcal{G}$ containing the two type of subgroups of $\Gamma$: (1) stabilizers of bounded parabolic points in Bowditch boundary of vertex groups which are not identified with edge parabolic point. (2) stabilizers of edge parabolic points in $\Gamma$. Then $\Gamma$ is hyperbolic relative to $\mathcal{G}$.
\begin{rem}
	 The limit set of each vertex group, $\Gamma_v$ for the action of $\Gamma$ on $M$, is homeomorphic to its Bowditch boundary $\partial\Gamma_v$. It is clear that $\Gamma_v$ acts geometrically finitely on its limit set, and therefore, $\Gamma_v$ is a relatively quasi-convex subgroup of $\Gamma$.
\end{rem}
 Now, we prove Theorem \ref{rel cyclic}. For that, let us recall some basic definitions and results from \cite{osinelem}.
 
  Let $G$ be a group hyperbolic relative to a collection of subgroups $\{H_{\alpha},\alpha\in \Lambda\}$. A subgroup $Q$ of $G$ is said to be \emph{hyperbolically embedded} in $G$ if $G$ is hyperbolic relative to $\{H_{\alpha},\alpha\in \Lambda\}\cup \{Q\}$. We say that an element $g$ of $G$ is \emph{parabolic} if it is conjugate to an element of $H_\alpha$ for some $\alpha\in \Lambda$. Otherwise, it is called \emph{hyperbolic}. For any hyperbolic element $g$ of infinite order, we set $E(g)=\{f\in G : f^{-1}g^nf=g^{\pm}n\}$. For any hyperbolic element of infinite order, Osin proved the following theorem:
\begin{theorem}\label{osin}\cite[Theorem 4.3]{osinelem}
	Every hyperbolic element $g$ of infinite order in $G$ is contained in a unique maximal elementary subgroup $E(g)$.
\end{theorem}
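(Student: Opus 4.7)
The plan is to work on the Bowditch boundary $\partial G$, where $G$ acts as a geometrically finite convergence group, and to identify $E(g)$ with the setwise stabilizer of the fixed point pair of $g$ on $\partial G$.

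First, I would observe that since $g$ is hyperbolic (in the relatively hyperbolic sense) and has infinite order, $g$ is not conjugate into any peripheral subgroup $H_\alpha$, so it is not a parabolic element for the convergence action of $G$ on $\partial G$. Combined with the dichotomy for infinite order elements in convergence groups, $g$ acts loxodromically on $\partial G$, with exactly two fixed points $g^+, g^- \in \partial G$. Define $E_\partial(g) := \mathrm{Stab}_G(\{g^+, g^-\})$, the setwise stabilizer of this pair. The key step is to show the set-theoretic equality $E_\partial(g) = E(g)$. The inclusion $E(g) \subseteq E_\partial(g)$ is immediate: a relation $f^{-1} g^n f = g^{\pm n}$ forces $f$ to send the fixed-point pair of $g^n$, which coincides with that of $g$, into itself. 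For the reverse inclusion, given $f \in E_\partial(g)$, the element $f^{-1} g f$ is loxodromic with the same two fixed points as $g$ (possibly swapped). By a standard convergence-group argument (Tukia, \cite{tukiametric}), two loxodromic elements sharing an unordered fixed-point pair must have commensurable powers that agree up to inversion, yielding $f^{-1} g^n f = g^{\pm n}$ for some $n \geq 1$.

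Next I would show that $E_\partial(g)$ is elementary, i.e.\ virtually cyclic with $\langle g \rangle$ of finite index. The index-two subgroup $E_\partial^+(g)$ fixing both $g^+$ and $g^-$ pointwise acts on $\partial G \setminus \{g^+, g^-\}$; by the convergence property this action is proper on the space of triples, and using that $g^+, g^-$ are not parabolic points (their stabilizers contain a loxodromic element, which is forbidden for parabolic subgroups by \cite[Proposition 3.2]{bowconvg}), one shows every element of $E_\partial^+(g)$ is either trivial, torsion, or shares both fixed points with $g$. Tukia's classification of elementary convergence groups then forces $E_\partial^+(g)$ to be virtually $\langle g \rangle$, so $E(g) = E_\partial(g)$ is virtually cyclic, hence elementary; in particular it is hyperbolically embedded in $G$ relative to $\{H_\alpha\}$ in the sense required.

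For uniqueness and maximality, I would argue as follows. Suppose $H \leq G$ is an elementary subgroup containing $g$. Being elementary in the convergence-group sense means $H$ fixes some finite set $F \subseteq \partial G$ setwise. Since $g \in H$ acts loxodromically with unique fixed pair $\{g^+, g^-\}$, any finite $H$-invariant set must contain $\{g^+, g^-\}$; equivalently, every $h \in H$ permutes the $g$-fixed pair, so $H \subseteq E_\partial(g) = E(g)$. This simultaneously shows containment in $E(g)$ and uniqueness of the maximal elementary subgroup containing $g$.

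The main obstacle I anticipate is the virtual-cyclicity step: verifying that the setwise stabilizer of a loxodromic fixed-point pair is genuinely virtually cyclic rather than merely elementary in a weaker sense. This is where Tukia's dynamical analysis of convergence actions (together with the fact that loxodromic fixed points are conical and hence not parabolic, so their stabilizers cannot contain a copy of a peripheral subgroup transverse to $\langle g \rangle$) does the real work; the rest of the argument is a direct translation between the algebraic description of $E(g)$ and the dynamical description via fixed points on $\partial G$.
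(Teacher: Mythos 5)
Your argument is essentially correct, but note that the paper does not prove this statement at all: it is imported verbatim as \cite[Theorem 4.3]{osinelem}, and Osin's own proof is combinatorial, working with quasi-geodesics and the linear relative Dehn function in the relative Cayley graph rather than with any boundary. Your route is the dynamical one: identify $E(g)$ with the setwise stabilizer of the fixed pair $\{g^+,g^-\}$ of the loxodromic $g$ on the Bowditch boundary, invoke Tukia \cite[Theorem 2I]{tukiametric} to get $[\mathrm{Stab}_G(\{g^+,g^-\}):\langle g\rangle]<\infty$ (the same fact this paper uses in its Lemma on $Z\oplus Z$ subgroups), and deduce maximality from the fact that any virtually cyclic subgroup containing $g$ has limit set exactly $\{g^+,g^-\}$ and therefore preserves it. This is a legitimate alternative proof and arguably more in the spirit of the present paper, which works with convergence actions throughout; what it buys is conceptual transparency, at the cost of presupposing the equivalence of Bowditch's definition with Osin's and the dictionary ``hyperbolic element $\Leftrightarrow$ loxodromic on $\partial G$.'' Two points deserve more care than you give them: (i) the step ``$g$ not conjugate into any $H_\alpha$ implies $g$ is not parabolic on $\partial G$'' needs the fact that, for a geometrically finite action, the fixed point of a parabolic element is a bounded parabolic point and hence its stabilizer is a conjugate of some $H_\alpha$ (conical limit points cannot be parabolic fixed points, cf. \cite{pekka}); and (ii) ``elementary'' in Osin's statement means virtually cyclic, not merely ``preserves a finite subset of $\partial G$,'' so in the maximality step you should either argue algebraically (a virtually cyclic $H\ni g$ has $\Lambda(H)=\Lambda(\langle g\rangle)$) or observe that once the finite invariant set is forced to contain $\{g^+,g^-\}$ you land in $E_\partial(g)$, which you have already shown is virtually cyclic. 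With those two clarifications the proof is complete.
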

From proof of the above theorem, it follows that $[E(g):\langle g\rangle ]<\infty$ and therefore $E(g)$ is elementary. Also, for infinite order hyperbolic element, the unique maximal elementary subgroup is hyperbolically embedded in the relatively hyperbolic group $G$, see \cite[Corollary 1.7]{osinelem}. To prove Theorem \ref{rel cyclic}, as we mentioned in the introduction, it is sufficient to consider the amalgam and the HNN extension case. 

\subsection*{Proof of \thmref{rel cyclic}}
Case(1): Let $\Gamma = \Gamma_1\ast_{Z_1\simeq Z_2}\Gamma_2$. Suppose $Z_1=\langle\gamma_1\rangle$ and $Z=\langle \gamma_2\rangle$. If both $\gamma_1,\gamma_2$ are parabolic elements in $\Gamma_1,\Gamma_2$ respectively, then we are in amalgam case of Theorem \ref{rel para}, and hence $\Gamma$ is relatively hyperbolic. Suppose at least one of them is a hyperbolic element, then by Theorem \ref{osin}, we have a maximal elementary subgroup containing cyclic subgroup generated by a hyperbolic element which is hyperbolically embedded. Thus, we are in the amalgam case of the Theorem \ref{rel para} and therefore $\Gamma$ is relatively hyperbolic.\\
Case(2): Let $\Gamma=\Gamma_1\ast_{Z\simeq Z'}$ and let $Z=\langle \gamma\rangle, Z'=\langle \gamma'\rangle$ are isomorphic subgroups of $\Gamma_1$. Again if both $\gamma,\gamma'$ are parabolic elements, then we are in the HNN extension case of Theorem \ref{rel para}, and hence $\Gamma$ is relatively hyperbolic. If at least one of them is a hyperbolic element, then by applying the Theorem \ref{osin}, we get maximal elementary subgroups containing that cyclic subgroup that is hyperbolically embedded. Thus, we are in the HNN extension case of Theorem \ref{rel para}, and hence $\Gamma$ is relatively hyperbolic.

In either case, we have proved that $\Gamma$ is relatively hyperbolic and applying Theorem \ref{rel para}, we also have a description of Bowditch boundary. \qed

\section{Homeomorphism type of Bowditch boundary}
In \cite{alex}, authors proved that the homeomorphism type of Gromov boundary of the fundamental group of a graph of hyperbolic groups with finite edge groups depends only on the set of homeomorphism type of Gromov boundary of non-elementary hyperbolic vertex groups. It is not clear that the same result can be extended in the case of a graph of relatively hyperbolic groups with finite edge groups. However, under some assumptions, we prove a similar result for a graph of relatively hyperbolic groups with parabolic edge groups. For the convenience of the reader, we are again stating the following theorem:
\begin{theorem}\label{homeotype}
	Let $Y$ be a finite connected graph and let $G(Y),G'(Y)$ be two graph of groups satisfying the following:
	\begin{enumerate}
		\item For each vertex $v\in V(Y)$, let $(G_v,\mathbb{P}_v),(G_v',\mathbb{P}_v')$ be relatively hyperbolic groups.
		\item Let $e\in E(Y)$ be an edge with vertices $v,w$ and let $P_e,P_e'$ are parabolic edge groups in $G(Y),G'(Y)$, respectively. Then either $P_e,P_e'$ have infinite index in corresponding maximal parabolic subgroups in $G_v,G_v'$, respectively or $P_e,P_e'$ have the same finite index in maximal parabolic subgroups in $G_v,G_v'$, respectively. Similarly, either $P_e,P_e'$ have infinite index in maximal parabolic subgroups in $G_w,G_w'$, respectively or $P_e,P_e'$ have the same finite index in corresponding maximal parabolic subgroups in $G_w,G_w'$, respectively.
		\item Let $B_v$, $B_v'$ be the set of translates of parabolic points corresponding to adjacent edge groups under the action of $G_v,G_v'$ on their Bowditch boundaries respectively. For each vertex $v\in V(Y)$, suppose we have a homeomorphism from $\partial G_v\rightarrow \partial G_v'$ that maps $B_v$ onto $B_v'$.
	\end{enumerate}
Let $\Gamma=\pi_1(G(Y))$, $\Gamma'=\pi_1(G'(Y))$. (By Theorem \ref{rel para}, the groups $(\Gamma,\mathbb{P}),(\Gamma',\mathbb{P'})$ are relatively hyperbolic) Then there exists a homeomorphism from $\partial \Gamma$ to $\partial \Gamma'$ preserving edge parabolic points, i.e. taking parabolic points corresponding to edge groups of $G(Y)$ to parabolic points corresponding to edge groups of $G'(Y)$.
\end{theorem}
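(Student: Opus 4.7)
The plan is to exploit the explicit construction of the Bowditch boundary from Section~\ref{3}: both $M = \partial\Gamma$ and $M' = \partial\Gamma'$ are obtained by taking, for each vertex $\tilde v$ of the Bass-Serre tree $T$ (resp.\ $\tilde v'$ of $T'$), a copy $X_{\tilde v}$ of $\partial G_v$ (resp.\ $X_{\tilde v'}'$ of $\partial G_{v}'$), gluing adjacent vertex boundaries at single edge-parabolic points (since edge group limit sets are singletons), and adjoining portions of $\partial T$ with further identifications of certain tree-boundary points with edge-parabolic points of infinite domains. My goal is to build $\Phi : M \to M'$ in three stages: (a) a combinatorial bijection $\Phi_T : T \to T'$ covering the identity on the quotient graph $Y$; (b) homeomorphisms $h_{\tilde v} : X_{\tilde v} \to X_{\Phi_T(\tilde v)}'$ for each matched vertex pair, derived from the homeomorphism in hypothesis (3); (c) the canonical extension of $\Phi_T$ to $\partial T \to \partial T'$.

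For stage (a), fix basepoints $\tilde v_0 \in T$ and $\tilde v_0' \in T'$ above a common vertex $v_0 \in V(Y)$ and set $\Phi_T(\tilde v_0) = \tilde v_0'$. Inductively, given a matching $\tilde v \leftrightarrow \tilde v'$, the edges of $T$ at $\tilde v$ projecting to a fixed $e \in E(Y)$ are parametrized by $G_v/G_e$, and this space maps $[P_v^e : G_e]$-to-$1$ onto the orbit $G_v \cdot \xi_e \subset B_v$, where $P_v^e$ is the maximal parabolic of $G_v$ containing $G_e$ and $\xi_e$ is its fixed point. The homeomorphism $h_v : \partial G_v \to \partial G_v'$ of hypothesis (3) sends $B_v$ bijectively onto $B_v'$, and hypothesis (2) guarantees the fiber cardinalities $[P_v^e : G_e]$ and $[(P_v^e)' : G_e']$ agree. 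After modifying $h_v$ on the countable subset $B_v$ so that it respects the decomposition into $G_v$- and $G_v'$-orbits (using that a combinatorial rearrangement of countable parabolic orbits with matching cardinalities can be absorbed by a self-homeomorphism of $\partial G_v'$ supported near the locus being permuted), I pick a compatible bijection of edges at $\tilde v$, defining $\Phi_T$ on the far endpoints, and iterate outward through $T$.

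For stages (b) and (c), each $h_{\tilde v} : X_{\tilde v} \to X_{\Phi_T(\tilde v)}'$ is obtained from the modified $h_v$ by appropriate translation: writing $X_{\tilde v} = \gamma X_v$ and $X_{\Phi_T(\tilde v)}' = \gamma' X_v'$ for suitable $\gamma \in \Gamma$, $\gamma' \in \Gamma'$, and pre-/post-composing $h_v$ by elements of $G_v$ (resp.\ $G_v'$) to arrange that the single edge-parabolic point attached to each edge $\tilde e$ of $T$ incident to $\tilde v$ is sent to the edge-parabolic point attached to $\Phi_T(\tilde e)$. Because each gluing in the construction of $M$ identifies only a pair of singletons, this matching guarantees that the collection $\{h_{\tilde v}\}$ descends to a well-defined map $\Omega/{\sim} \to \Omega'/{\sim}$. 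Extend $\Phi_T$ to $\partial T \to \partial T'$ by sending geodesic rays to geodesic rays; since $\Phi_T$ respects edge-parabolic identifications, the extension descends to the final quotients defining $M$ and $M'$. Continuity of the assembled $\Phi$ then follows directly from the explicit basis $\{W_n(p)\}$ of Section~\ref{3}, because each such $W_n(p)$ is encoded by combinatorial data in $T$ together with open sets in finitely many vertex boundaries, all of which are preserved by $\Phi$; the same argument applied to $\Phi^{-1}$ yields a homeomorphism, and edge-parabolic points are sent to edge-parabolic points by construction.

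The main obstacle is stage (a): the given $h_v$ is only a set-theoretic homeomorphism carrying $B_v$ to $B_v'$ and need not respect the decomposition of $B_v$ into $G_v$-orbits, which is precisely the data that labels the edges of $T$ at $\tilde v$. Resolving this requires the orbit-preserving adjustment sketched above, leveraging countability of $B_v$ and the matching fiber cardinalities provided by hypothesis (2). A secondary technical point is the compatibility of the vertex-homeomorphisms across each edge of $T$ in stage (b); this is handled inductively by propagating the matching choices along a spanning tree of $T$, which uses the tree structure to rule out cocycle obstructions and permits aligning edge-parabolic points across every edge. Once these alignments are in place, the remainder of the argument reduces to the routine verification that $\Phi$ and $\Phi^{-1}$ preserve the neighborhood basis from Section~\ref{3}.
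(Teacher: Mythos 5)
Your overall route is the same as the paper's: reduce to the amalgam and HNN cases, use the vertex homeomorphisms of hypothesis (3) together with the index condition (2) to produce a bijection between cosets of $P_e$ in $G_v$ and cosets of $P_e'$ in $G_v'$ (i.e.\ between the edges of $T$ at $\tilde v$ and of $T'$ at $\tilde v'$), propagate this outward to an isomorphism $\phi:T\to T'$, define the map on vertex boundaries by translates of the given homeomorphisms and on $(\partial T)'$ by $\partial\phi$, and verify continuity against the neighborhood basis of Section~\ref{3}, concluding by compactness of the two Bowditch boundaries. Stages (b), (c) and your continuity argument match the paper's proof essentially verbatim.

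The one place you genuinely diverge is the ``orbit-preserving adjustment'' in stage (a), and that step has a real gap. You claim that the rearrangement of the countable set $B_v'$ needed to make $h_v$ respect the decomposition into $G_v$-orbits can be ``absorbed by a self-homeomorphism of $\partial G_v'$ supported near the locus being permuted.'' But $B_v'$ is a union of orbits of a non-elementary convergence action and is therefore dense in $\partial G_v'$, so ``supported near the locus'' imposes no restriction, and a prescribed bijection of a countable dense subset of a compactum extends to a homeomorphism only in very special circumstances (a homeomorphism is determined by its restriction to a dense set, so most such rearrangements are simply not realizable). As stated, this step does not work. The paper never needs it: in the amalgam case $B_v$ is a single $G_v$-orbit, so $h_v(B_v)=B_v'$ already yields the bijection $G_v/P_v\to G_v'/P_v'$; in the HNN case the paper first treats the subcase where both copies of the edge group lie in the same maximal parabolic (again one orbit) and then reduces the remaining subcase to an amalgam followed by that subcase, via $G\ast_{P_1\simeq P_2}=(G\ast_{P_1}P_2)\ast_{P_2}$. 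If you either adopt that reduction, or read hypothesis (3) as matching the orbit of each edge's parabolic point with the orbit of the corresponding edge's parabolic point (which is the paper's implicit reading of ``corresponding''), your construction closes up and coincides with the paper's; without one of these, stage (a) is not justified.
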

\begin{rem}
	In the above theorem, it is not possible that for $G(Y)$, edge groups are maximal parabolic, and for $G'(Y)$, edge groups are parabolic (Not maximal parabolic). Thus in both graphs of groups, either edge groups are maximal parabolic or edge groups are parabolic. The example below justifies this situation.
\end{rem}
\begin{example}
	Let $Y$ be an edge and let $F(a,b)$ be a free group of rank $2$. Let $G(Y)$ be a double of free group $F(a,b)$ along $\langle[a,b]\rangle$ and $G'(Y)$ be a double of free group $F(a,b)$ along $\langle[a,b]^2\rangle$. Thus $\Gamma= F(a,b)\ast_{\langle[a,b]\rangle \simeq \langle[\bar{a},\bar{b}]\rangle} F(\bar{a},\bar{b})$ and $\Gamma'= F(a,b)\ast_{\langle[a,b]^2\rangle \simeq \langle[\bar{a},\bar{b}]^2\rangle} F(\bar{a},\bar{b})$. Assume that $F(a,b)$ is relatively hyperbolic with respect to $\langle[a,b]\rangle$. By Theorem \ref{rel para}, $\Gamma$ and $\Gamma'$ are relatively hyperbolic with respect to $\langle[a,b]\rangle$ and $\langle[a,b]\rangle\ast_{\langle[a,b]^2\rangle \simeq \langle[\bar{a},\bar{b}]^2\rangle} \langle[\bar{a},\bar{b}]\rangle$ respectively. Here, we just take identity map between Bowditch boundaries of vertex groups. From the construction of Bowditch boundaries, it is clear that if we remove parabolic points from Bowditch boundaries $\partial \Gamma, \partial \Gamma'$ respectively, we have two connected components, infinitely many connected components, respectively. Therefore there is no homeomorphism from $\partial \Gamma$ to $\partial \Gamma'$ preserving edge parabolic points.
\end{example}
Also, the above theorem does not deal with the case when parabolic edge groups have different finite indexes in corresponding maximal parabolic subgroups. Here, we give a specific example in this direction.
\begin{example}
	Consider the two groups $\Gamma= F(a,b)\ast_{\langle[a,b]\rangle \simeq \langle[\bar{a},\bar{b}]\rangle } F(\bar{a},\bar{b})$ and $\Gamma' = F(a,b)\ast_{\langle[a,b]\rangle \simeq \langle[a,b]^2\rangle} F(\bar{a},\bar{b})$. Assume that $F(a,b)$ is relatively hyperbolic with respect to $\langle[a,b]\rangle$. Both the groups $\Gamma,\Gamma'$ are relatively hyperbolic with Bowditch boundary $\partial \Gamma$, $\partial \Gamma'$ respectively. Again, from the construction of Bowditch boundary, removing a parabolic point from $\partial \Gamma$ gives two connected components but removing a parabolic point from $\partial \Gamma'$ gives three connected components. Thus there is no homeomorphism from $\partial \Gamma$ to $\partial \Gamma'$ preserving edge parabolic points. Similarly, if we take $\Gamma= F(a,b)\ast_{\langle[a,b]\rangle\simeq \langle[a,b]^3\rangle } F(\bar{a},\bar{b})$ and $\Gamma' = F(a,b)\ast_{\langle[a,b]\rangle \simeq \langle[a,b]^2\rangle} F(\bar{a},\bar{b})$ then there is no homeomorphism from $\partial \Gamma$ to $\partial \Gamma'$ preserving edge parabolic points.
\end{example}
To prove Theorem \ref{homeotype}, it is sufficient to consider amalgam and HNN extension case.
%%%%%%%%%%%%%%%%%%%%%%%%%%%%%%%%%%%%%%%%%%%%%%%%%%%%%%%%%%%%%%%
\subsection{ Proof of Theorem \ref{homeotype} in amalgam case:} 

Let $Y$ be an edge. Then $G(Y)$ and $G'(Y)$ are amalgams of two relatively hyperbolic groups with parabolic edge groups. Let $\Gamma=\pi_1(G(Y))$ and let $\Gamma'=\pi_1(G'(Y))$. Let $T,T'$ be the Bass-Serre trees for $G(Y),G'(Y)$, respectively. For each edge $e\in T$ and $e'\in T'$, let $P_e,P_e'$ be parabolic edge groups in $\Gamma,\Gamma'$ respectively. Also, in adjacent vertices of $e$ and $e'$, let $P_v,P_w$ and let $P_v',P_w'$ be maximal parabolic subgroups corresponding to $P_e,P_e'$ respectively. Let $\partial \Gamma,\partial \Gamma'$ denotes Bowditch boundaries of $\Gamma,\Gamma'$ respectively. Keeping the construction of Bowditch boundaries in mind, we define a map $f$ from $\partial \Gamma$ to $\partial \Gamma'$. 

Let $e,e'$ be two edges of $T,T'$ with vertices $v,w$ and $v',w'$ respectively. Suppose we have homeomorphisms $\partial G_v\rightarrow \partial G_v'$ and $\partial G_w\rightarrow \partial G_w'$ as in Theorem \ref{homeotype}(3). By definition of these homeomorphisms, we have bijections between cosets of $P_v$ in $G_v$ and cosets of $P_v'$ in $G_v'$. Also, there is a bijection between cosets of $P_e$ in $P_v$ and cosets of $P_e'$ in $P_v'$. Combining these two, we get a bijection between cosets of $P_e$ in $G_v$ and cosets of $P_e'$ in $G_v'$. Similarly, we have a bijection between cosets of $P_e$ in $G_w$ and cosets of $P_e'$ in $G_w'$. By following this process inductively, we have an isomorphism $\phi$ from $T$ to $T'$. Let $\xi\in \partial G_v$ for some vertex $v\in V(T)$. Define $f(\xi):=f_v(\xi)$, where $f_v$ is a homeomorphism from $\partial G_v$ to $\partial G_{\phi(v)}$. Note that if $\xi$ is a parabolic point in $\partial G_v$ and let $D(\xi)$ be its domain then $\phi|_{D(\xi)}=D(f(\xi))$. Since $\phi$ is an isomorphism, we have a homeomorphism $\partial \phi$ from $\partial T$ to $\partial T'$. Observe that if some point of $\partial T$ is identified with some parabolic point, then its image under $\partial \phi$ is also identified with some parabolic point. If $\eta \in \partial T$ such that it is not identified with some edge parabolic point, define $f(\eta):= \phi(\eta)$. Clearly, $f$ is a bijection. Thus, we have a map $f$ from $\partial \Gamma$ to $\partial \Gamma'$. To prove that $f$ is a homeomorphism, it is sufficient to prove that $f$ is continuous as Bowditch boundaries $\partial \Gamma,\partial \Gamma'$ are compact Hausdorff. Let $\xi\in \partial G_v$ for some $v\in V(T)$ and let $U$ be a neighborhood of $f(\xi)$ in $\partial \Gamma'$. Note that $\phi(D(\xi))=D(f(\xi))$. For each vertex $u\in D(\xi)$, we can choose a neighborhood $V_u$ such that $f_u(V_u)\subset U_{\phi(u)}$ as $f_u$ is a homeomorphism, where $U_{\phi(u)}$ is a neighborhood around $f(\xi)$ in $\partial G_{\phi(u)}$. Now, it is clear from the definition of neighborhoods in $\partial \Gamma$ and definition of maps $f,\phi$ that we can find a neighborhood $V$ of $\xi$ in $\partial \Gamma$ such that $f(V)\subset U$. Now, let $\eta \in \partial T$ such that it is not identified with a parabolic point. Let $U$ be a neighborhood of $f(\eta)$ in $\partial \Gamma'$.
From the construction of map $\phi$, it is clear that $\phi$ takes subtree $W_m(\eta)$ (see section \ref{3} ) onto the subtree $W_m(f(\eta))$. Then again, by definition of neighborhoods, we can find a neighborhood $V$ of $\eta$ in $\partial \Gamma$ such that $f(V)\subset U$. The map $f$ is continuous, and hence $f$ is a homeomorphism. \qed
%%%%%%%%%%%%%%%%%%%%%%%%%%%%%%%%%%%%%%%%%%%%%%%%%%%%%%%%%%%%%%%
\subsection{Proof of Theorem \ref{homeotype} in HNN extension case:} Here, we prove it in the following two subcases:

Subcase(1) Let $\Gamma=G\ast_{P_1\simeq P_2},\Gamma'=G'\ast_{P_1'\simeq P_2'}$, where $(G,\mathbb{P}),(G',\mathbb{P'})$ are relatively hyperbolic groups. Assume that $P_1,P_2$, and $P_1',P_2'$ are both sitting inside the same maximal parabolic subgroups in $G,G'$, respectively. Let $T,T'$ be the Bass-Serre trees of $\Gamma,\Gamma'$ respectively. Also, we have a homeomorphism between Bowditch boundaries $\partial G,\partial G'$ satisfying (3) in Theorem \ref{homeotype}. We get an isomorphism $\phi$ from $T$ to $T'$ in a similar manner as in the amalgam case. Also, we can define a map from $\partial \Gamma$ to $\partial \Gamma'$ in the same way as we define in the amalgam case. Note that $f$ is a bijection. To prove that $f$ is a homeomorphism, it is sufficient to prove that $f$ is continuous as $\partial \Gamma,\partial \Gamma'$ are compact Hausdorff. Again continuity is clear from the definition of map $f$ and definition of neighborhoods in $\partial \Gamma, \partial \Gamma'$ respectively.

Subcase(2) Let $\Gamma=G\ast_{P_1\simeq P_2},\Gamma'=G'\ast_{P_1'\simeq P_2'}$, where $(G,\mathbb{P}),(G',\mathbb{P'})$ are relatively hyperbolic groups. In this case, $P_1,P_2$, and $P_1',P_2'$ are sitting inside in different(not conjugate) maximal parabolic subgroups in $G,G'$, respectively. Now, we can write $\Gamma =(G\ast_{P_1}P_2)\ast_{P_2}$ and $\Gamma'=(G'\ast_{P_1'}P_2')\ast_{P_2'}$, respectively. By applying amalgam and Subcase(1) of the HNN extension respectively, we get the desired homeomorphism from $\partial \Gamma$ to $\partial \Gamma'$. \qed
%%%%%%%%%%%%%%%%%%%%%%%%%%%%%%%%%%%%%%%%%%%%%%%%%%%%%%%%%%%%%%%%%%%%%%%%%%%%%%%%%%%%%%%%%%%%%%%%%%%%%%%%%%%%%%%%%%%%%%%%%%%%%
\section{Applications and examples}
\subsection{Example of a subgroup of a relatively hyperbolic group with exotic limit set}

In this subsection, following the construction of space given in Section \ref{3}, we give an example of a relatively hyperbolic group having a non-relatively quasi-convex subgroup whose limit set is not equal to the limit of any  relatively quasi-convex subgroup. This is motivated by the work of I.Kapovich \cite{ilya-kapovich}, where he gave such an example in the case of hyperbolic group.
 
 Consider the torus with one puncture and let $\psi$ be a pseudo-Anosov homeomorphism fixing the puncture. Suppose $M_{\psi}$ is the mapping torus for the homeomorphism $\psi$. Let $G,F$ be the fundamental groups of $M_{\psi}$, puncture torus respectively. Then it is well known that $G$ is relatively hyperbolic with respect to a subgroup isomorphic to $Z\oplus Z$, and subgroup $F$ is not relatively quasi-convex in $G$. Let $\Gamma=G\ast_{\langle z\rangle}\overline{G}$ and let $H=F\ast_{\langle z\rangle}\overline{F}$, where $z\in F$, be doubles of $G$ and $H$ respectively along cyclic subgroup $\langle z\rangle$. The groups $\Gamma,H$ are relatively hyperbolic, by Theorem \ref{rel cyclic}. We have the following:
 \begin{lemma}
 	$H$ is not a relatively quasi-convex subgroup of $\Gamma$.
 \end{lemma}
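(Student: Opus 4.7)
The plan is to argue by contradiction: I will intersect $H$ with the vertex subgroup $G \leq \Gamma$, identify this intersection with the fiber group $F$, and then appeal to the standing hypothesis that $F$ is not relatively quasi-convex in $G$.

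First, I would note that $z \in F$ is peripheral in the once-punctured torus, so $\langle z \rangle$ sits inside the rank-two cusp subgroup of $G = \pi_1(M_\psi)$; in particular $\langle z \rangle$ is a parabolic subgroup of both $G$ and $\overline G$. Hence $\Gamma = G \ast_{\langle z \rangle} \overline G$ is a graph of relatively hyperbolic groups with parabolic edge group in the sense of \thmref{rel para}, and by the Remark following the proof of \thmref{rel para} each vertex subgroup, in particular $G$, is relatively quasi-convex in $\Gamma$.

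Next, assuming for contradiction that $H$ is relatively quasi-convex in $\Gamma$, I would invoke the intersection theorem for relatively quasi-convex subgroups of a relatively hyperbolic group (Hruska / Mart\'{\i}nez-Pedroza) to conclude that $H \cap G$ is relatively quasi-convex in $\Gamma$. I would then identify this intersection via Bass-Serre theory. Let $T$ be the Bass-Serre tree of $\Gamma$ with vertex $v_0$ stabilized by $G$; the subgroup $H$ acts on $T$ with the Bass-Serre tree of its splitting $F \ast_{\langle z \rangle} \overline F$ embedded $H$-equivariantly inside $T$, and the $H$-stabilizer of $v_0$ is precisely the vertex group $F$. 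Hence $H \cap G = F$.

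Finally, I would descend relative quasi-convexity from $\Gamma$ to $G$. Since $F \leq G$, the limit set $\Lambda_\Gamma F$ lies inside $\Lambda_\Gamma G$, which by the Remark is $G$-equivariantly homeomorphic to $\partial G$. Comparing minimal closed $F$-invariant subsets on the two sides of this homeomorphism identifies $\Lambda_\Gamma F$ with $\Lambda_G F \subset \partial G$, and under this identification the geometrically finite action of $F$ on $\Lambda_\Gamma F$ becomes a geometrically finite action of $F$ on $\Lambda_G F$. Thus $F$ would be relatively quasi-convex in $G$, contradicting the hypothesis (a standard consequence of the fact that the fiber subgroup of a pseudo-Anosov mapping torus is geometrically infinite), and the proof is complete.

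\textbf{Main obstacle.} The only delicate step is the descent from relative quasi-convexity in $\Gamma$ to relative quasi-convexity in $G$: it requires both the intersection theorem and the equivariant identification $\Lambda_\Gamma G \cong \partial G$ supplied by the Remark. Once these are in hand, the Bass-Serre identification $H \cap G = F$ is immediate and the contradiction drops out.
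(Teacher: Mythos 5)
Your argument is correct and reaches the same key waypoint as the paper---that $F$ would be relatively quasi-convex in $\Gamma$---after which both proofs descend identically, transporting the geometrically finite action of $F$ through the equivariant homeomorphism $\Lambda_{\Gamma}(G)\cong\partial G$ to contradict the geometric infiniteness of the fiber subgroup. The difference is in how you reach that waypoint. The paper observes that $F$ is a vertex group of $H=F\ast_{\langle z\rangle}\overline{F}$, hence relatively quasi-convex in $H$, and applies transitivity of relative quasi-convexity (Bigdely--Wise, Lemma 2.3) to the chain $F\leq H\leq\Gamma$. You instead use that $G$ is relatively quasi-convex in $\Gamma$ as a vertex group and invoke Hruska's intersection theorem to get that $H\cap G$ is relatively quasi-convex, then identify $H\cap G=F$. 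Both routes work; transitivity is the lighter tool because it sidesteps the identification $H\cap G=F$, which you should justify by a normal-form argument in the amalgam (reduced words in $H$ over $\langle z\rangle$ are already reduced in $\Gamma$) rather than by appealing to the embedding of Bass--Serre trees, since that embedding is itself essentially equivalent to the equality you want. One further caveat: you assume $z$ is the peripheral element of the punctured torus so that $\langle z\rangle$ is parabolic from the outset, whereas the paper takes an arbitrary $z\in F$ and, as in the proof of Theorem \ref{rel cyclic}, enlarges the peripheral structure of $G$ by the maximal elementary subgroup $E(z)$ to make the edge group parabolic; your proof survives this generalization, but you should then say explicitly with respect to which peripheral structure of $G$ the relative quasi-convexity of $F$ is being contradicted. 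Finally, note that the paper's use of normality of $F$ in $G$ (giving $\Lambda_G(F)=\partial G$) is not actually needed for your descent step, which applies to any subgroup of $G$.
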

 \begin{proof}
 	Suppose $H$ is relatively quasi-convex in $\Gamma$. Since $F$ is relatively quasi-convex in $H$ and $H$ is relatively quasi-convex in $\Gamma$, $F$ is relatively quasi-convex in $\Gamma$ by \cite[Lemma 2.3]{bigdely}. Since $F$ is a normal subgroup of $G$, the limit set of $F$ in $G$ is the same as Bowditch boundary of $G$ that is homeomorphic to the limit set of $G$ in $\Gamma$. Also the limit set of $F$ in $\Gamma$ is the same as Bowditch boundary of $G$. Hence $F$ acts geometrically finitely on its limit in $G$. Thus, $F$ is relatively quasi-convex in $G$, which is a contradiction.
 \end{proof}
As we observe in the proof of Theorem \ref{rel cyclic}, the edge group in $\Gamma$ is parabolic, or we change the parabolic structure in vertex group $G$ so that edge group become parabolic. Therefore, following Section \ref{3}, we can give a construction of Bowditch boundary of relatively hyperbolic group $\Gamma$. Now, we prove the following:
\begin{lemma}\label{maximal}
	$Stab_{\Gamma}(\Lambda(H))=H$, i.e. $H$ is maximal in its limit set.
\end{lemma}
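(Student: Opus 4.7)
The inclusion $H \subseteq \mathrm{Stab}_{\Gamma}(\Lambda(H))$ is immediate. For the reverse direction, the strategy is to extract the Bass-Serre subtree $T_H \subseteq T_\Gamma$ intrinsically from the limit set $\Lambda(H)$ and then invoke the bipartite structure of $T_\Gamma$ to conclude.

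The first step is to pin down $\Lambda(H)$ inside the boundary $\partial \Gamma$ constructed in Section \ref{3}. Because $G = \pi_1(M_\psi)$ fibers over $S^1$ with fibre $F$, the subgroup $F$ is normal in $G$, and similarly $\bar F \triangleleft \bar G$. Consequently, at each vertex $v \in V(T_H)$ the $H$-stabilizer $F_v$ is normal in the full $\Gamma$-vertex stabilizer $G_v$; since $G_v$ is non-elementary relatively hyperbolic and so acts minimally on $\partial G_v$, we get $\Lambda_{G_v}(F_v) = \partial G_v$, and therefore $\partial G_v \subseteq \Lambda(H)$ for every $v \in V(T_H)$. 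Applying Lemmas \ref{large} and \ref{small} to sequences $(h_n) \subseteq H$, whose translates $h_n v_0$ always lie in $T_H$, one checks that the attracting points always lie in the closure of $\bigcup_{v \in V(T_H)} \partial G_v$, so
\[
\Lambda(H) = \overline{\bigcup_{v \in V(T_H)} \partial G_v}.
\]

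The second step is to characterize $V(T_H)$ as $\{v \in V(T_\Gamma) : \partial G_v \subseteq \Lambda(H)\}$. One direction is the previous paragraph. For the other, fix $v \notin V(T_H)$ and let $e_\ast$ be the unique first edge of the geodesic from $v$ into $T_H$. For every $w \in V(T_H) \setminus \{v\}$, the geodesic $[v,w]$ in $T_\Gamma$ begins with $e_\ast$. Hence, using the neighborhood basis of Section \ref{3} together with the convergence criterion \ref{convergence criterion}, any sequence $\xi_n \in \partial G_{v_n}$ with $v_n \in V(T_H) \setminus \{v\}$ converging in $\partial\Gamma$ to a point of $\partial G_v$ must converge to the edge parabolic point $p_{e_\ast} \in \partial G_v$. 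Combined with the first step, this forces $\Lambda(H) \cap \partial G_v \subseteq \{p_{e_\ast}\}$, which is a proper subset of $\partial G_v$.

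Now for $g \in \mathrm{Stab}_\Gamma(\Lambda(H))$ and $v \in V(T_H)$ we have $\partial G_{gv} = g\partial G_v \subseteq \Lambda(H)$, so $gv \in V(T_H)$; by symmetry $gT_H = T_H$. It remains to show $\mathrm{Stab}_\Gamma(T_H) \subseteq H$. Since $\Gamma = G \ast_{\langle z\rangle} \bar G$ is an amalgam of two non-conjugate factors, $\Gamma$ preserves the bipartition of $T_\Gamma$ into $G$- and $\bar G$-type vertices; the same holds for $T_H$. Hence $gv_G$ is a $G$-type vertex of $T_H$; but the $G$-type vertices of $T_H$ form the single $H$-orbit $H \cdot v_G$, so $gv_G = hv_G$ for some $h \in H$. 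Setting $k := h^{-1}g$, we have $k \in G = G_{v_G}$ and $k$ preserves $T_H$. The edges of $T_H$ at $v_G$, regarded as edges of $T_\Gamma$, correspond to the subset $F/\langle z\rangle \subseteq G/\langle z\rangle$, and left multiplication by $k$ must preserve this subset; this forces $kF = F$, whence $k \in F \subseteq H$ and $g = hk \in H$.

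\textbf{Main obstacle.} The crucial subtle step is showing that $\Lambda(H) \cap \partial G_v$ contains at most one point when $v \notin V(T_H)$. This requires a careful application of the neighborhood basis from Section \ref{3} to verify that any sequence from vertex boundaries $\partial G_{v_n}$ with $v_n \in V(T_H)$ can accumulate in $\partial G_v$ only through the single edge parabolic $p_{e_\ast}$; this phenomenon rests on the tree-geometric fact that all paths from $v$ into $V(T_H)$ share the same initial edge, which allows the normality argument for the first step to be combined with the tree topology to pin down $T_H$ exactly.
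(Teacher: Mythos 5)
Your proof is correct and follows essentially the same route as the paper: both identify $\Lambda(H)$ with the preimage in $M$ of $T_H\cup\partial T_H$ (using that $F$ is normal in $G$, so each vertex group of $H$ has full limit set in the corresponding vertex boundary), and then read off the stabilizer from the tree. The only difference is that you spell out the final step --- that $\mathrm{Stab}_\Gamma(\Lambda(H))$ preserves $T_H$ and that $\mathrm{Stab}_\Gamma(T_H)=H$ via the bipartition, the single $H$-orbit of $G$-type vertices, and the edge set $F/\langle z\rangle\subset G/\langle z\rangle$ --- which the paper compresses into ``the lemma follows immediately from the construction.''
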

\begin{proof}
	Let $T,T'$ be the Bass-Serre trees of the groups $\Gamma,H$ respectively. Note that the tree $T'$ embeds in $T$, and each vertex group in $H$ is normal in the corresponding vertex group of $\Gamma$. Also, there is topological embedding between Gromov boundaries of $T'$ and $T$. Let $M$ be Bowditch boundary of $\Gamma$. Here we explicitly know the construction of $M$ (see Section \ref{3}). Let $p$ be a map from $M\rightarrow T\cup \partial T$ defined as follows: for $\xi\in \partial G_v$, define $p(\xi)=v$ and for $\eta\in \partial T$, define $p(\eta)=\eta$. Consider $N$, a subset of $M$, the inverse image of $T'\cup \partial T'$ under the map $p$. It is clear from the definition of topology on $M$ that $N$ is the minimal closed $H$-invariant set. Thus $\Lambda(H)=N$. Now, the lemma follows immediately from the construction of Bowditch boundary $M$.
\end{proof}
Now, we prove that the limit set of the subgroup $H$ is exotic, i.e. there is no relatively quasi-convex subgroup of $\Gamma$ whose limit set is equal to the limit set of $H$. Recall that a subgroup of a relatively hyperbolic group is dynamically quasi-convex if and only if it is relatively quasi-convex (see \cite{gerasomov-potya}).
\begin{lemma}
	There does not exist a relatively quasi-convex subgroup of $\Gamma$ whose limit set is equal to the limit set of $H$. 
\end{lemma}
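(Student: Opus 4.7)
The plan is to argue by contradiction: suppose there exists a relatively quasi-convex subgroup $K$ of $\Gamma$ with $\Lambda(K)=\Lambda(H)$. The first step is to observe that $K\leq H$. Indeed, for every $k\in K$ one has $k\Lambda(K)=\Lambda(K)$, hence $k\Lambda(H)=\Lambda(H)$, and by \lemref{maximal} this forces $k\in H$.

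The second step upgrades this to a finite-index inclusion $[H:K]<\infty$. Since $K$ is relatively quasi-convex in the relatively hyperbolic group $(\Gamma,\mathbb{P})$, it is dynamically quasi-convex for $\Gamma\curvearrowright\partial\Gamma$ by the Gerasimov--Potyagailo result already recalled in the paper. Pick two distinct points $x,y\in\Lambda(H)$ together with disjoint closed sets $A\ni x$ and $B\ni y$ in $\partial\Gamma$. For every $h\in H$, $h\Lambda(K)=h\Lambda(H)=\Lambda(H)$, which meets both $A$ and $B$. The definition of dynamical quasi-convexity then forces the set of cosets $\{hK:h\in H\}$ to be finite, i.e.\ $[H:K]<\infty$.

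The third step concludes using the standard fact that a finite-index overgroup of a relatively quasi-convex subgroup is itself relatively quasi-convex: $H$ acts as a convergence group on $\Lambda(K)=\Lambda(H)$, every $H$-orbit is a finite union of $K$-orbits, and the ``bounded parabolic'' and ``conical limit'' properties transfer between $K$ and $H$. Therefore $H$ would be relatively quasi-convex in $\Gamma$, directly contradicting the previous lemma and finishing the proof.

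The main technical point to be careful with is the passage from dynamical quasi-convexity to $[H:K]<\infty$; what makes it work is that dynamical quasi-convexity is phrased on the coset space $\Gamma/K$, so distinct cosets $hK\subset H$ automatically contribute distinct elements to the (finite) set of cosets whose translate of $\Lambda(K)$ meets both $A$ and $B$. Everything else is either a standard limit-set manipulation or a well-known commensurability property of relatively quasi-convex subgroups.
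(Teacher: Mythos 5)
Your proof is correct, and it opens and closes exactly as the paper's does: both arguments first use Lemma \ref{maximal} to force the hypothetical relatively quasi-convex subgroup $K$ (the paper calls it $Q$) inside $H=\mathrm{Stab}_\Gamma(\Lambda(H))$, and both terminate by concluding that $H$ itself would be relatively quasi-convex, contradicting the preceding lemma. The middle step is where you diverge. The paper goes through dynamical quasi-convexity twice: $Q$ is dynamically quasi-convex by Gerasimov--Potyagailo \cite{gerasomov-potya}, hence its limit-set stabilizer $H$ is dynamically quasi-convex by \cite[Lemma 2.6]{yang}, hence $H$ is relatively quasi-convex. You instead extract the finite-index statement $[H:K]<\infty$ directly from the definition of dynamical quasi-convexity (your observation that every coset $hK$ with $h\in H$ lands in the finite set of cosets whose translate of $\Lambda(K)$ meets two fixed disjoint closed sets is exactly right, and uses only $|\Lambda(H)|\ge 2$), and then invoke commensurability invariance of relative quasi-convexity. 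In effect you have reproved the relevant special case of Yang's lemma rather than citing it, which makes your middle step more self-contained; the price is that you must appeal to the standard but unproved-here fact that a finite-index overgroup of a relatively quasi-convex subgroup is relatively quasi-convex (this is in \cite{hruska}, and your sketch of why conical and bounded parabolic points transfer from $K$ to $H$ is adequate under the paper's dynamical definition of relative quasi-convexity). Both routes are sound; the paper's is shorter given the cited lemmas, yours makes the finite-index mechanism explicit.
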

\begin{proof}
	If possible, there is a relatively quasi-convex subgroup $Q$ of $\Gamma$ such that $\Lambda(Q)=\Lambda(H)$. Since $Stab_{\Gamma}(\Lambda(H))=Stab_{\Gamma}(\Lambda(Q))=H$, we see that $Q\subset H$. Thus, by \cite[Lemma 2.6]{yang}, we see that $H$ is a dynamically quasi-convex subgroup of $\Gamma$; it is relatively quasi-convex, which gives a contradiction as $H$ is not relatively quasi-convex.
\end{proof}
\begin{rem}
	In \cite{dahmni}, Dahmani gave a construction of Bowditch boundary for the fundamental group of an acylindrical graph of relatively hyperbolic groups with fully quasi-convex edge groups. In particular, we can construct Gromov boundary of the fundamental group of an acylindrical graph of hyperbolic groups with quasi-convex edge groups. Let $S_g,g\geq 2$ be a closed orientable surface of genus $g$ and let $\phi$ be a pseudo-Anosov homeomorphism of $S_g$. Let $M_{\phi}$ be the mapping torus corresponding to $\phi$ and let $G$ be the fundamental group of $M_{\phi}$. Then $G$ is a hyperbolic group and $F=\pi_1(S_g)$ is a non-quasiconvex subgroup of $G$. Let $z\in F$ be such that $z$ is not a proper power in $F$ and hence it is not a proper power in $G$. Consider the double $\Gamma$ of group $G$ along $\langle z\rangle$, i.e. $\Gamma=G\ast_{\langle z\rangle}\overline{G}$. Note that the group $\Gamma$ is hyperbolic by \cite{bestcom} and the subgroups $G, \overline{G}$ of $\Gamma$ are quasi-convex. Consider the group $H=F\ast_{\langle z\rangle}\overline{F}$. Again, by \cite{bestcom}, $H$ is hyperbolic. Now, using the construction of Gromov boundary of $\Gamma$ from \cite{dahmni}, we can explicitly construct the limit set of subgroup $H$ (as we did in Lemma \ref{maximal}). Then, we have $Stab_{\Gamma}\Lambda(H)=H$. Then, using the same idea as above, $H$ is not quasi-convex, and there is no quasi-convex subgroup of $\Gamma$ whose limit set equal $\Lambda(H)$. Thus, we have a different proof of I.Kapovich's result from \cite{ilya}.
\end{rem}
Above, we have given an example of a graph of relatively hyperbolic groups for which we have a subgraph of groups such that the fundamental group of subgraph of groups is not relatively quasi-convex in the fundamental group of the graph of groups. Contrary to that, we prove that when the subgraph of groups is obtained by restricting graph of groups (as in Theorem \ref{rel para}) to a subgraph, then the fundamental group of the subgraph of groups is relatively quasi-convex in the fundamental group of graph of groups.

Let $\mathcal{G}(\mathcal{Y})$ be a graph of groups as in Theorem \ref{rel para} and $\mathcal{G}(\mathcal{Y}_1)$ be a subgraph of groups obtained by restricting graph of groups $\mathcal{G}(\mathcal{Y})$ to a subgraph $Y_1\subset Y$. Let $\Gamma,\Gamma_1$ be the fundamental groups of $\mathcal{G}(\mathcal{Y})$, $\mathcal{G}(\mathcal{Y}_1)$, respectively. Let $T_1$ be the Bass-Serre tree for $\mathcal{G}(\mathcal{Y}_1)$. As an application of Theorem \ref{rel para}, we have the following:
\begin{prop}
	$\Gamma_1$ is a relatively quasi-convex subgroup of $\Gamma$.
\end{prop}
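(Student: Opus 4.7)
The plan is to verify relative quasi-convexity of $\Gamma_1$ in $\Gamma$ directly against Definition \ref{defn}(2). By Theorem \ref{rel para}, $(\Gamma_1,\mathbb{P}_1)$ is relatively hyperbolic, so $\Gamma_1$ acts as a geometrically finite convergence group on its Bowditch boundary, which I will realize by applying the construction of Section \ref{3} to $\mathcal{G}(\mathcal{Y}_1)$ and denote $M_1$. The subgroup $\Gamma_1$ embeds in $\Gamma$ via standard Bass-Serre theory; hence the whole content of the proposition is the production of a $\Gamma_1$-equivariant homeomorphism $M_1 \cong \Lambda(\Gamma_1) \subset M$, where $M=\partial\Gamma$ is the space of Section \ref{3}.

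First I would identify $T_1$ with the (unique) minimal $\Gamma_1$-invariant subtree of $T$; since $T_1$ is connected and $T$ is a tree, $T_1$ sits as a convex subtree of $T$, and $(\Gamma_1)_v=\Gamma_v$ for every $v\in V(T_1)$, so the vertex boundaries $X_v$ entering the two constructions agree and $\partial T_1 \hookrightarrow \partial T$. With this setup I define $f\colon M_1\to M$ by the identity on each $X_v$ with $v\in V(T_1)$ and by the inclusion on $\partial T_1$. The edge identifications used to build $M_1$ are a sub-collection of those used to build $M$; and for every edge parabolic point $x$ one has $D_1(x)=D(x)\cap T_1\subset D(x)$, whence $\partial D_1(x)\subset \partial D(x)$ in $\partial T$. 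Thus all identifications of $M_1$ are honored in $M$, and $f$ is a well-defined $\Gamma_1$-equivariant injection. Continuity is an unwinding of the bases $\{W_n(p)\}$ of Section \ref{3}: a basic neighborhood of $f(p)$ in $M$ is cut out by data associated with finitely many vertices, whose restriction to $T_1$ cuts out a basic neighborhood of $p$ in $M_1$, essentially because the defining subtrees $T_{v,U}$ for $M_1$ are the intersections with $T_1$ of the corresponding subtrees for $M$.

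Since $M_1$ is compact and $M$ is Hausdorff, $f$ is automatically a topological embedding with closed $\Gamma_1$-invariant image. As $\Gamma_1$ acts minimally on $M_1$ (its action is geometrically finite and its Bowditch boundary equals its limit set), the image $f(M_1)$ is a minimal nonempty closed $\Gamma_1$-invariant subset of $M$, and hence coincides with $\Lambda(\Gamma_1)$. Combined with geometric finiteness of the $\Gamma_1$-action on $M_1$, the equivariant homeomorphism $M_1\cong \Lambda(\Gamma_1)$ gives relative quasi-convexity. The step I expect to be the main obstacle is the compatibility of identifications at edge-parabolic points: one must rule out that an identification present in $M$ (where an edge parabolic point $x$ may have a strictly larger domain extending outside $T_1$) glues together two distinct points of $M_1$ or breaks the continuity of $f$. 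This is controlled precisely by the inclusion $\partial D_1(x)\subset \partial D(x)$, and is the only place the convexity of $T_1$ inside $T$ is really used.
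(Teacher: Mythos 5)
Your proposal is correct and follows essentially the same route as the paper: both identify $\Lambda(\Gamma_1)$ with the Section~\ref{3} space $M_1$ built from $\mathcal{G}(\mathcal{Y}_1)$, realized as a closed, minimal, $\Gamma_1$-invariant subset of $M$. The only difference is where the labor is placed: you construct an explicit equivariant homeomorphism $M_1\to\Lambda(\Gamma_1)$ (checking compatibility of the edge-parabolic identifications via convexity of $T_1$ in $T$) and then import geometric finiteness from Theorem~\ref{rel para}, whereas the paper takes the inclusion $M_1\subset M$ as given and instead re-verifies geometric finiteness of the $\Gamma_1$-action on $M_1$ directly --- vertex-boundary points are conical or bounded parabolic by the lemmata of Section 6, and points of $(\partial T_1)'$ are conical by cocompactness of the $\Gamma_1$-action on $T_1$ combined with the proof of Lemma~\ref{conical}.
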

\begin{proof}
	$\mathcal{G}(\mathcal{Y}_1)$ is also a graph of relatively hyperbolic groups with parabolic edge groups. Using the construction of Section \ref{3}, we have a set $M_1\subset M$ for $\Gamma_1$. By definition of neighborhoods on $M$, we see that $M_1$ is a closed subset of $M$.  It is also minimal $\Gamma_1$-invariant subset of $M$. Thus, $M_1$ is the limit set of $\Gamma_1$. By the proof of Theorem \ref{rel para}, it is clear that every point of $M_1$ coming from vertex boundaries is either conical or bounded parabolic. Now, let $\eta$ be a point in $(\partial T_1)'$. Since the action of $\Gamma_1$ on $T_1$ is co-compact, there exists a sequence $(\gamma_{1n})_{n\in \mathbb{N}}\subset \Gamma_1$ and a vertex $v\in T_1$ such that $\gamma_{1n}v$ meet the geodesic ray $[v_0,\eta)$ for each $n$. Now by applying the proof of Lemma \ref{conical}, we see that up to extraction, a subsequence of $(\gamma_{1n}^{-1})$, $(\gamma_{1n}^{-1})$ is a conical sequence for the action of $\Gamma$ on $M$. Since $M_1$ is a closed subset of $M$ and $(\gamma_{1n})_{n\in \mathbb{N}}\subset \Gamma_1$, $(\gamma_{1n}^{-1})$ is conical for the action of $\Gamma_1$ on $M_1$. Thus the proposition follows.
\end{proof}
\subsection{Example of a family of non-convergence groups}
Here, we give an example of a family of groups that do not act on a compact metrizable space as a non-elementary convergence group. 
\begin{prop}\label{notconv}
	Let $G$ be a torsion-free group and let $H$ be a subgroup of $G$ satisfying the following:
\begin{itemize}
	\item $H$ is malnormally closed in $G$, i.e. there is no proper subgroup of $G$ containing $H$, which is malnormal in $G$
	\item $[Comm_G(H):H]>1$, where $Comm_G(H)$ denotes the commensurator of $H$ in $G$.
\end{itemize}
Consider the double $\Gamma$ of the group $G$ along $H$, i.e. $\Gamma=G\ast_{H\simeq \overline{H}}\overline{G}$. Then, $\Gamma$ does not act on a compact metrizable space as a non-elementary convergence group.
\end{prop}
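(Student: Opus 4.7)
The plan is to argue by contradiction: assume $\Gamma$ acts on a compact metrizable space $M$ as a non-elementary convergence group, force $H$ to be elementary inside $\Gamma$, and then produce a proper malnormal subgroup of $G$ containing $H$, contradicting the malnormally closed hypothesis. Note first that $\Gamma$ is torsion-free, being an amalgam of torsion-free groups over a common subgroup, and that $H\neq\{1\}$: otherwise $\{1\}$ would itself be a proper malnormal subgroup of $G$ containing $H$, violating malnormal closure. Choose $g\in Comm_G(H)\setminus H$, which exists because $[Comm_G(H):H]>1$, and set $H''=H\cap g^{-1}Hg$, a finite-index subgroup of the infinite torsion-free group $H$. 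For every $h\in H''$ one has $ghg^{-1}\in H$, and using the amalgam identification $H=\overline H$ inside $\Gamma$, the same conjugation carried out in $\overline G$ yields $\bar g h\bar g^{-1}=ghg^{-1}$ in $\Gamma$; thus $\bar g^{-1}g$ commutes with every element of $H''$. Moreover $\bar g^{-1}g$ is a cyclically reduced word of length $2$ in $G*_H\overline G$, so by Bass--Serre theory it is loxodromic on the Bass--Serre tree and in particular has infinite order in $\Gamma$.

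Since $\Gamma$ is a torsion-free convergence group on $M$, the infinite-order element $\bar g^{-1}g$ is either loxodromic or parabolic on $M$. If it is loxodromic with fixed-point pair $\{x,y\}$, its centralizer lies in $Stab_\Gamma(\{x,y\})$, which is an elementary virtually cyclic subgroup; hence $H''\subset C_\Gamma(\bar g^{-1}g)$ is torsion-free virtually cyclic, so $H''\cong\mathbb{Z}$, and therefore $H$ (a finite-index extension with the same limit set) is itself infinite cyclic with $\Lambda(H)=\{x,y\}$. If instead $\bar g^{-1}g$ is parabolic with fixed point $p$, its centralizer lies in the maximal parabolic $P=Stab_\Gamma(p)$, so $H''$ is an infinite parabolic subgroup at $p$; using that $P$ is self-commensurating in $\Gamma$ (any element commensurating the infinite parabolic $H''$ must fix its unique parabolic point, by the standard argument with limit sets of finite-index subgroups), we conclude $H\subset Comm_\Gamma(H'')\subset P$, so $H$ is parabolic with $\Lambda(H)=\{p\}$. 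In either case $H$ is elementary.

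Now let $N=Stab_\Gamma(\Lambda(H))$; by the previous step $N$ is either a maximal parabolic subgroup or the setwise stabilizer of a loxodromic fixed-point pair, and in the torsion-free non-elementary convergence group $\Gamma$ both such stabilizers are malnormal, since two distinct copies can share only torsion, which is trivial. Because finite-index subgroups of an infinite subgroup have the same limit set, $Comm_G(H)\subset N$; hence $N\cap G$ is a subgroup of $G$ that properly contains $H$ (as it contains $Comm_G(H)$) and is malnormal in $G$ (the intersection of a malnormal subgroup with a subgroup is malnormal). If $N\cap G$ is a proper subgroup of $G$, this contradicts the malnormal closure of $H$ in $G$. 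Otherwise $G\subset N$; applying the symmetric argument with $\bar g\in Comm_{\overline G}(\overline H)\setminus\overline H$ yields $\overline G\subset N$, so $\Gamma\subset N$, which forces $\Gamma$ to be elementary and contradicts the non-elementarity assumption.

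The main obstacle is the second step: converting the algebraic identity $\bar g h\bar g^{-1}=ghg^{-1}$ cleanly into convergence-dynamics input and invoking the two standard but essential facts that, in a torsion-free convergence group, (i) the centralizer of any infinite-order element is contained in an elementary subgroup, and (ii) maximal parabolic subgroups are self-commensurating. Once these are in place, the final malnormality argument in the third step is routine.
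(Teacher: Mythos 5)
Your proposal is correct and follows essentially the same route as the paper: the identity $\bar g^{-1}gh(\bar g^{-1}g)^{-1}=h$ for $h\in H\cap g^{-1}Hg$ coming from the double structure, the conclusion that $H$ lies in an elementary (parabolic) subgroup, and then malnormality of that stabilizer combined with malnormal closure to force $G$, $\overline G$, and hence $\Gamma$ into it. The only cosmetic difference is that the paper short-circuits your loxodromic/parabolic case split by noting that $\langle \bar g g^{-1},w\rangle\cong\ZZ\oplus\ZZ$ (since $\bar gg^{-1}$ is hyperbolic and $w$ elliptic on the Bass--Serre tree) and that a $\ZZ\oplus\ZZ$ subgroup of a convergence group must be parabolic, which shows your loxodromic case is in fact vacuous.
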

 First of all, we collect some basic facts about subgroups of a convergence group.
\begin{lemma}
	Suppose $G$ acts on a compact metrizable space as a convergence group. Then a subgroup $P$ of $G$ isomorphic to $Z\oplus Z$ is parabolic.
\end{lemma}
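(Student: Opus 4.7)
The plan is to verify directly the three defining conditions for $P$ to be parabolic as recalled in Section \ref{pre}: that $P$ is infinite, that $P$ contains no loxodromic element, and that $P$ fixes a point of $M$. The first is immediate from $P \cong \mathbb{Z}\oplus\mathbb{Z}$, so all the work lies in the remaining two.

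First I would rule out loxodromic elements of $P$. Suppose for contradiction that some $a \in P$ is loxodromic with fixed-point pair $\{x,y\}$, and choose $b \in P$ with $\langle a,b\rangle \cong \mathbb{Z}\oplus\mathbb{Z}$. Commutativity $bab^{-1}=a$ forces $b$ to preserve the (unique) fixed-point pair of $a$, so $b\{x,y\}=\{x,y\}$; after replacing $b$ by $b^2$, which together with $a$ still generates a $\mathbb{Z}^2$, we may assume $b$ fixes both $x$ and $y$. Since $b$ has infinite order, \cite[Theorem 2G]{tukiametric} (already invoked in Lemma \ref{dyna}) implies that $b$ is loxodromic with fixed-point pair $\{x,y\}$. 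It is a standard fact in convergence-group theory that the stabilizer of a loxodromic fixed-point pair is virtually cyclic, which contradicts $\langle a,b\rangle \cong \mathbb{Z}^2$.

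Having ruled out loxodromic elements, I next produce the common fixed point. Since $P\cong\mathbb{Z}^2$ is torsion-free, every nontrivial $g\in P$ has infinite order, and by the standard trichotomy of elements in a convergence group (elliptic, parabolic, loxodromic) each such $g$ must be parabolic, hence fixes a unique point $p_g\in M$. For $a,b\in P\setminus\{1\}$, commutativity gives $a(b\cdot p_a) = b(a\cdot p_a) = b\cdot p_a$, so $b\cdot p_a$ is a fixed point of $a$; uniqueness of $p_a$ forces $b\cdot p_a = p_a$, i.e. $b$ fixes $p_a$, and uniqueness of $p_b$ then yields $p_a = p_b$. Thus all nontrivial elements of $P$ share a common fixed point $\xi$, so $P$ fixes $\xi$, completing the verification that $P$ is parabolic.

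The main obstacle is the folklore-style claim that the setwise stabilizer of a pair of loxodromic fixed points is virtually cyclic. Rather than rely on this as a black box, one can argue directly from the convergence property: if $a$ and $b$ are independent loxodromic elements sharing the fixed-point pair $\{x,y\}$, the north-south dynamics of $a^n$ force $b \cdot a^n \cdot b^{-1} = a^n$ to have the same attracting-repelling behaviour, and a comparison of the sequences $a^n$ and $a^n b^{-n}$ via the convergence axiom yields a nontrivial relation between $a$ and $b$, contradicting $\langle a,b\rangle \cong \mathbb{Z}^2$.
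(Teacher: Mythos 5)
Your proof is correct, and it turns on the same key fact as the paper's: the stabilizer of the fixed-point pair of a loxodromic element is virtually cyclic, which is exactly \cite[Theorem 2I]{tukiametric} (the "folklore" claim you flag in your last paragraph is not a black box here --- it is the very result the paper cites, so you can simply invoke it and drop the sketched ad hoc replacement, which as written is too vague to count as a proof). The organization differs, though. The paper works with the limit set of the whole subgroup: since $P$ is abelian, $|\Lambda(P)|\leq 2$; if $|\Lambda(P)|=2$ then $P$ contains a loxodromic and sits in the virtually cyclic stabilizer of $\Lambda(P)$, a contradiction, so $|\Lambda(P)|=1$ and $P$ is parabolic in one stroke. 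You instead argue element by element: rule out loxodromics via a commuting partner trapped in the virtually cyclic pair-stabilizer, then use the trichotomy to make every nontrivial element parabolic, and extract the common fixed point from the fact that commuting parabolics share their unique fixed point. Your route is longer but has the minor advantage of not needing the (true, but itself nontrivial) fact that an abelian subgroup has limit set of cardinality at most $2$; the paper's route is shorter and gets the fixed point for free from $|\Lambda(P)|=1$. Both are complete modulo standard convergence-group facts that the paper uses elsewhere (e.g.\ that infinite-order elements are parabolic or loxodromic, as in Lemma 5.3).
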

\begin{proof}
	 Since $P$ is abelian, $|\Lambda(P)|\leq2$. Let if possible $|\Lambda(P)|=2$. Then $P$ contains a loxodromic element $p$ (say). The fixed point set of $p$, Fix$(p)=\Lambda(P)$ and $Stab_G(\Lambda(P))$ contains $P$. By \cite[Theorem 2I]{tukiametric}, $\langle p\rangle$ has finite index in $Stab_G(\Lambda(P))$. In particular, $\langle p\rangle$ has finite index in $P$, which is impossible. Hence $|\Lambda(P)|=1$ and $P$ is a parabolic subgroup.
\end{proof}
A subgroup $K$ of $G$ is said to be weakly malnormal if for all $g\in G\setminus H$, $|H\cap gHg^{-1}|<\infty$. We  observe the following:
\begin{lemma}\label{malnormal}
	Let $G$ be a group that acts on a compact metrizable space as a convergence group. Then maximal parabolic subgroups are weakly malnormal.
\end{lemma}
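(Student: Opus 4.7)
The plan is to assume for contradiction that $H := P \cap gPg^{-1}$ is infinite and to derive a clash between the two parabolic convergence behaviors of $P$ and $gPg^{-1}$, which are parabolic at distinct points. Since $P$ is maximal parabolic, $P = \mathrm{Stab}_G(\xi_0)$ for the unique parabolic point $\xi_0$ of $P$, and for $g \in G \setminus P$ we have $g\xi_0 \neq \xi_0$; setting $\eta_0 := g\xi_0$, the conjugate $gPg^{-1} = \mathrm{Stab}_G(\eta_0)$ is parabolic at $\eta_0$, and $H$ fixes both $\xi_0$ and $\eta_0$ pointwise.

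The key tool I would invoke is the following parabolic convergence property: for a parabolic subgroup $Q$ of $G$ at a parabolic point $\zeta$, any sequence $(q_n)$ of distinct elements in $Q$ admits a subsequence that converges to $\zeta$ uniformly on compact subsets of $M \setminus \{\zeta\}$. To establish this, take a convergence subsequence with attracting point $a$ and repelling point $r$; both $a$ and $r$ are accumulation points of $Q$-orbits and so lie in $\Lambda(Q)$. But $\{\zeta\}$ is a non-empty closed $Q$-invariant subset of $M$ and $\Lambda(Q)$ is, by the paper's definition, the unique minimal such set, whence $\Lambda(Q) = \{\zeta\}$ and $a = r = \zeta$.

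Applying this property to a distinct sequence $(h_n) \subset H$, first viewed as a sequence in $P$ and then as a sequence in $gPg^{-1}$, two successive extractions produce a subsequence $(h_{n_k})$ with $h_{n_k}(x) \to \xi_0$ for every $x \neq \xi_0$ and simultaneously $h_{n_k}(x) \to \eta_0$ for every $x \neq \eta_0$. Picking any $x \in M \setminus \{\xi_0, \eta_0\}$---such an $x$ must exist, for otherwise $M = \{\xi_0, \eta_0\}$ and $P$ would fix both points of $M$, contradicting uniqueness of its parabolic fixed point---one obtains $\xi_0 = \eta_0$, the desired contradiction, so $H$ is finite. The main obstacle lies in the identification $\Lambda(Q) = \{\zeta\}$ from the paper's working definition of parabolic (infinite, no loxodromic element, fixes a point); the only non-trivial input is that the minimal closed $Q$-invariant subset of $M$ is unique, which is standard in convergence group theory and is implicit in the paper's use of Tukia's classification in the lemma immediately preceding.
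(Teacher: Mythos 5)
Your proof is correct and is essentially the paper's argument in unwound form: the paper simply observes that an infinite $P\cap gPg^{-1}$ has limit set equal to both $\Lambda(P)=\{p\}$ and $\Lambda(gPg^{-1})=g\Lambda(P)=\{gp\}$, forcing $gp=p$ and hence $g\in P$, while you re-derive that same limit-set identity via convergence subsequences attracting to the two parabolic points and then evaluate at a third point. The dynamical detour (and the check that $|M|>2$) is harmless but unnecessary once one invokes the limit set of the infinite intersection directly.
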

\begin{proof}
	Let $P$ be a maximal parabolic subgroup with $\Lambda(P)=\{p\}$. Suppose for some $g\in G$, $|P\cap gPg^{-1}|=\infty$. As $P\cap gPg^{-1}\subset P$, $\Lambda(P\cap gPg^{-1})=\Lambda(P)=\{p\}$. Similarly, $\Lambda(P\cap gPg^{-1})=\Lambda(gPg^{-1})=g\Lambda(P)$. Hence $g$ fixes the parabolic point $p$ and therefore $g\in P$.
\end{proof}
Note that if $G$ is torsion-free in the above lemma, then maximal parabolic subgroups are malnormal.
Now, we obtain the following:
\begin{lemma}\label{double para}
	Let $\Gamma$ be the double of $G$ along $H$ as above. If $H\cap gHg^{-1}\neq \{1\}$ then $H\cap gHg^{-1}$ is a subgroup of a parabolic subgroup. 
\end{lemma}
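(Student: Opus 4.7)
My plan is to work inside the proof by contradiction of Proposition \ref{notconv}: assume $\Gamma$ admits a non-elementary convergence action on a compact metrizable space $M$, and under this assumption show that $K := H \cap gHg^{-1}$ sits inside a maximal parabolic subgroup of $\Gamma$. First observe that since $G$ and $\overline{G}$ are torsion-free, the amalgam $\Gamma = G \ast_{H \simeq \overline{H}} \overline{G}$ is torsion-free; hence the non-trivial subgroup $K$ must be infinite.

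In any convergence group an infinite subgroup containing no loxodromic element is parabolic---it fixes a unique point of $M$ and sits inside the corresponding maximal parabolic subgroup. So it suffices to show that $K$ contains no loxodromic element for $\Gamma$'s action on $M$. Suppose for contradiction that $h \in K$ is loxodromic with fixed-point pair $\{x,y\}$. Because $h \in H$ and $g^{-1}hg \in H$, both $\{x,y\}$ and $g^{-1}\{x,y\}$ lie in $\Lambda(H)$. Next I would invoke the commensurator hypothesis: for any $c \in Comm_G(H)$, the subgroups $H$ and $cHc^{-1}$ share a finite-index subgroup and hence have the same limit set, so $c\Lambda(H) = \Lambda(H)$. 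Therefore $Comm_G(H) \subseteq Stab_G(\Lambda(H))$, and the setwise stabilizer $S := Stab_G(\Lambda(H))$ properly contains $H$ inside $G$.

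The plan is then to show that, in this loxodromic scenario, $S$ is a proper malnormal subgroup of $G$, contradicting the malnormal closure of $H$ in $G$. Properness follows from the non-elementarity of $\Gamma$'s action: if $S = G$, then $G$ preserves $\Lambda(H)$, and combined with the analogous invariance under $\overline{G}$ (coming from $\overline{H} \subset \overline{G}$ having the same limit set as $H$), one extracts a proper non-empty closed $\Gamma$-invariant subset of $M$, contradicting non-elementarity. Malnormality of $S$ in $G$ follows from Lemma \ref{malnormal} together with torsion-freeness: since $h$ witnesses that $\Lambda(H)$ contains at least two points, the setwise stabilizer in $\Gamma$ (hence in $G$) is controlled either by Tukia's virtually cyclic stabilizer theorem (when $|\Lambda(H)|=2$) or by the malnormality of parabolic and loxodromic stabilizers in the non-elementary case. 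The main obstacle is this final malnormality step, especially when $\Lambda(H)$ is infinite: the argument requires extracting a loxodromic element of $\Gamma$ witnessing that two distinct $G$-conjugates of $S$ cannot share a non-trivial element. Once the contradiction is reached, the loxodromic case is ruled out and $K$ is parabolic, hence contained in a maximal parabolic subgroup of $\Gamma$, as claimed.
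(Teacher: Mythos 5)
There is a genuine gap here, and you flag it yourself: the malnormality of $S=Stab_G(\Lambda(H))$ in $G$. Lemma \ref{malnormal} (together with torsion-freeness) gives malnormality only for stabilizers of single parabolic points, and Tukia's theorem controls stabilizers of loxodromic fixed-point pairs; but in your setting $\Lambda(H)$ can be an arbitrary, possibly infinite, closed subset of $M$, and the setwise stabilizer of such a set in a convergence group has no reason to be malnormal. Since the contradiction with ``$H$ is malnormally closed in $G$'' is the engine of your whole argument, the proof does not close. The properness step is also not immediate: if $S=G$ you only conclude that $\Lambda(H)$ is a closed non-empty $\Gamma$-invariant set, hence contains (and therefore equals) $\Lambda(\Gamma)$; that is perfectly compatible with non-elementarity and yields no contradiction by itself.

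The paper's proof is entirely different and essentially algebraic, and it is worth seeing why it is so short: it exploits the involution of the double $\Gamma=G\ast_{H\simeq\overline{H}}\overline{G}$ that swaps $G$ and $\overline{G}$ and fixes $H$ elementwise. Given $w\in H\cap gHg^{-1}$, write $w=ghg^{-1}$ with $h\in H$, so $h=g^{-1}wg$; applying the involution (which fixes $w$ and $h$) gives $h=\bar{g}^{-1}w\bar{g}$, whence $\bar{g}g^{-1}$ commutes with $w$. Torsion-freeness makes $\langle \bar{g}g^{-1},w\rangle$ a copy of $\mathbb{Z}\oplus\mathbb{Z}$, and the lemma immediately preceding Lemma \ref{double para} says that any such subgroup of a convergence group is parabolic; since $w$ was arbitrary, $H\cap gHg^{-1}$ lies in a parabolic subgroup. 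No commensurator, no limit-set stabilizer, and no malnormality argument is needed at this stage; those ingredients enter only afterwards, in the proof of Proposition \ref{notconv} itself. To salvage your dynamical approach you would need a genuinely new argument for the malnormality of $Stab_G(\Lambda(H))$, which I do not see; I would recommend switching to the centralizer trick above, which is where the hypothesis that $\Gamma$ is a double actually gets used.
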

\begin{proof}
	Let $w\in H\cap gHg^{-1}$. Then $w=ghg^{-1}$ for some $h\in H$. This implies $h=g^{-1}wg$. Since $\Gamma$ is double of group $G$ along $H$, $g^{-1}wg=\bar{g}^{-1}w\bar{g}$. Thus $\bar{g}g^{-1}$ commute with $w$. As $\Gamma$ is torsion-free, $\langle \bar{g}g^{-1},w\rangle\simeq Z\oplus Z$. By the above lemma, $\langle \bar{g}g^{-1},w\rangle$ is a parabolic subgroup. Since $w$ is an arbitrary element of $H\cap gHg^{-1}$, $H\cap gHg^{-1}$ is a subgroup of a parabolic subgroup. 
\end{proof}
{\bf Proof of Propostion \ref{notconv}:} Since $[Comm_G(H):H]>1$, let $g\in Comm_G(H)$ such that $g\notin H$. By Lemma \ref{double para}, $H\cap gHg^{-1}$ is a subgroup of a parabolic subgroup of $\Gamma$. Since $[H:H\cap gHg^{-1}]<\infty$, $H$ and $H\cap gHg^{-1}$ sit inside the same maximal parabolic subgroup $P$(say). Now, by Lemma \ref{malnormal}, the subgroup $P$ is malnormal in $\Gamma$. As $H$ is malnormally closed in $G$, $G\subset P$. Similarly, we can show that $\overline{G}\subset P$. Hence $\Gamma \subset P$ and therefore $\Gamma$ is not a non-elementary convergence group.                             \qed

{\bf Note:} Let $H,K$ be two subgroups of $G$ such that $H\subset K\subset G$ and $1<[K:H]<\infty$. Then one can check that $K\subset{\tiny } Comm_G(H)$. Also, assume that $H$ is malnormally closed in $G$. Then the double $\Gamma=G\ast_{H\simeq \overline{H}}\overline{G}$ is not a non-elementary convergence group.

By Floyd mapping theorem in \cite{gerasimovfloyd}, we see that relatively hyperbolic groups have non-trivial Floyd boundary. The converse of this fact is a question of Olshanskii-Osin-Sapir \cite[Problem 7.11]{olshanskiiosinsapir}, i.e. if a finitely generated group has non-trivial Floyd boundary then it is hyperbolic relative to a collection of proper subgroups. We have the following question:
\begin{qn}\label{qn2}
	Give an example of a non-elementary convergence group that is not relatively hyperbolic?
\end{qn} 
Answer to the above question does not give a counterexample to Olshanskii-Osin-Sapir conjecture because of the following:
\begin{qn}\label{qn3}
Give an example of a non-elementary convergence group with a trivial Floyd boundary?
\end{qn}
Answer to Question \ref{qn3} will also answer Question \ref{qn2}. Given a finitely presented group $Q$, Rips has constructed a hyperbolic group $G$ and a surjection from $G$ to $Q$. Let $N$ be the kernel of this surjection. Then it is not known whether $N$ is relatively hyperbolic with respect to a collection of proper subgroups of $N$ or if it has trivial Floyd boundary.
%%%%%%%%%%%%%%%%%%%%%%%%%%%%%%%%%%%%%%%%%%%%%%%%%%%%%%%%%%%%%%%%%%%%%%%%%%%%%%%%%%%%%%%%%%%%%%%%%%%%%%%%%%%%%%%%%%%%%%%%%%%%%%

\end{document}